\theoremstyle{plain}
\newtheorem{thm}{Theorem}[section]
\newtheorem{theorem}[thm]{Theorem}
\newtheorem{lemma}[thm]{Lemma}
\newtheorem{corollary}[thm]{Corollary}
\newtheorem{set-up}[thm]{Set-up}
\theoremstyle{definition}
\newtheorem{remark}[thm]{Remark}
\newtheorem{definition}[thm]{Definition}
\newtheorem{claim}[thm]{Claim}
\newtheorem{example}[thm]{Example}
\newtheorem{question}[thm]{Question}
\numberwithin{equation}{section}
\newtheorem{algorithm}[thm]{Algorithm}
\newtheorem{strategy}[thm]{Strategy}
\newcommand{\sA}{{\mathcal A}}
\newcommand{\sC}{{\mathcal C}}
\newcommand{\sF}{{\mathcal F}}
\newcommand{\sH}{{\mathcal H}}
\newcommand{\sL}{{\mathcal L}}
\newcommand{\sM}{{\mathcal M}}
\newcommand{\sO}{{\mathcal O}}
\newcommand{\sP}{{\mathcal P}}
\newcommand{\sR}{{\mathcal R}}
\newcommand{\sS}{{\mathcal S}}
\newcommand{\sT}{{\mathcal T}}
\newcommand{\sU}{{\mathcal U}}
\newcommand{\sX}{{\mathcal X}}
\newcommand{\C}{{\mathbb C}}
\newcommand{\F}{{\mathbb F}}
\newcommand{\Q}{{\mathbb Q}}
\newcommand{\R}{{\mathbb R}}
\newcommand{\Z}{{\mathbb Z}}
\title [Enriques surfaces and Entropy]{Minimum positive entropy of complex Enriques surface automorphisms}
\author{Keiji Oguiso} 
\address{Mathematical Sciences, the University of Tokyo, Meguro Komaba 3-8-1, Tokyo, Japan and Korea Institute for Advanced Study, Hoegiro 87, Seoul,
133-722, Korea}
\email{oguiso@ms.u-tokyo.ac.jp}
\author{Xun Yu}
\address{Center for Applied Mathematics, Tianjin University, 92 Weijin Road, Nankai District,
Tianjin 300072, P. R. China.
}
\email{xunyu@tju.edu.cn}
\thanks{The first named author is supported by JSPS Grant-in-Aid (S) No 25220701, JSPS Grant-in-Aid (S) 15H05738, JSPS Grant-in-Aid (B) 15H03611, and by KIAS Scholar Program. The second named author is supported by NSFC (No. 11701413).}
\begin{document}
\begin{abstract} 
We determine the minimum positive entropy of complex Enriques surface automorphisms. This together with McMullen's work completes the determination of the minimum positive entropy of complex surface automorphisms in each class of the Enriques-Kodaira classification of complex surfaces.
\end{abstract}
\maketitle

\tableofcontents


\section{Introduction}

Throughout this paper, we work over the complex number field $\C$. The aim of this paper is to determine the minimum positive entropy of automorphisms of Enriques surfaces. This together with McMullen's work \cite{Mc07}, \cite{Mc11a}, \cite{Mc16} completes the problem to determine the minimum positive entropy of compact K\"ahler surface automorphisms in each class of the Enriques-Kodaira classification of complex surfaces (see \cite{BHPV04} for basics on complex surfaces, and \cite{Mc02a}, \cite{DS05} for basics on complex dynamics we shall use). 

Let $X$ be a smooth compact K\"ahler surface and $f \in {\rm Aut}\, (X)$ an automorphism of $X$. By the fundamental theorem of Gromov-Yomdin, the {\it entropy} $h(f)$ of $f$ is given by
$$h(f) = \log d_1(f) \,\, \ge 0\,\, .$$
Here $d_1(f)  (\ge 1)$ is the {\it first dynamical degree} of $f$, that is, the spectral radius of $f^*|H^2(X, \C)$ when $f \in {\rm Aut}\, (X)$, which coincides with the spectral radius of $f^*|{\rm NS}\, (X)$ when $X$ is projective (see e.g.  \cite[Corollary 1.4]{ES13}). We call $f$ of positive entropy if $h(f) > 0$, i.e., if $d_1(f) > 1$. 

If $X$ admits an automorphism $f$ of positive entropy, then $X$ is either a rational surface or bimeromorphic to one of the following surfaces: a K3 surface, a complex torus of dimension $2$ or an Enriques surface. This important observation is due to Cantat (\cite[Proposition 1]{Ca99}), which relates complex dynamics with algebraic geometry. In the last three cases, we may and will assume that the surface is minimal. This is because any bimeromorphic selfmap of a minimal surface with non-negative Kodaira dimension is a biregular automorphism (see e.g. \cite{BHPV04}) and the first dynamical degree is a bimeromorphic invariant (\cite[Corollaire 7]{DS05}). As one of the referees pointed out,  in complex dynamics it is meaningful to look at automorphisms modulo bimeromorphic conjugacy.

We call $\tau>1$ a {\it Salem number}, if $\tau$ is conjugate to $1/\tau$ and all other conjugates lie on the unit circle $S^1$. The Salem polynomial of $\tau$ is the monic minimal polynomial $S(x) \in \Z[x]$ of $\tau$. The degree of $S(x)$, which we often call the degree of $\tau$, is an even integer. McMullen 
\cite{Mc02a} observed that $d_1(f)$ is a Salem number if $d_1(f) > 1$, i.e., if the entropy is positive. Furthermore, in \cite[Theorem 1.2]{Mc07}, McMullen also proved the following remarkable fact: if $f$ is of positive entropy, then 
$$d_1(f) \ge \lambda_{10} \approx 1.17628\,\, ,$$
where $\lambda_{10}$ is the Salem number whose Salem polynomial is
$$1 + x-x^3 -x^4 -x^5 -x^6 -x^7 +x^9 +x^{10}\,\, .$$
This is the smallest known Salem number. It is called the Lehmer number $\lambda_{10}$. These two observations give unexpected relations between complex dynamics of surface automorphisms and number theory. Since then, relations between surface automorphisms and Salem numbers, such as realizability of Salem numbers as the first dynamical degree of surface automorphisms and the determination of the minimum Salem number obtained in this way in each class of Enriques-Kodaira classification, and so on, have caught much attention by many authors from various view points. Among many works, McMullen has also shown that there are a rational surface, a non-projective K3 surface and a projective K3 surface, with automorphism $f$ such that $d_1(f) = \lambda_{10}$ (\cite[Corollary 1.3]{Mc07}, \cite[Theorem 1.1]{Mc11a}, \cite[Theorem 1.1]{Mc16}; see also \cite{BK09}, \cite{Ue16} for some other aspects of complex dynamics of rational surface automorphisms). For the case of a complex torus, because of the degree reason ($10 > 6 = b_2(X)$, and also $10 >  4 \ge {\rm rank}\, {\rm NS}\, (X)$ when it is projective), there is {\it a priori} no automorphism such that $d_1(f) = \lambda_{10}$, while the minimum is determined for both projective and non-projective complex torus of dimension $2$. They are the minimum Salem number $\lambda_4$ of degree $4$ and the minimum Salem number $\lambda_6$ of degree $6$ respectively (\cite[Theorem 1.3]{Mc11a}). See also \cite{Re11}, \cite{Re12} for more precise informations and Table \ref{tab:smallest} in Appendix A of our paper for the list of the minimum Salem number $\lambda_{2d}$ in each degree $2d \leq 10$. 

Recall that a complex {\it Enriques surface} $S$ is a smooth compact complex surface whose universal cover, which is of degree $2$, is a K3 surface. All Enriques surfaces are projective and they form a ten dimensional moduli. Any Enriques surface admits a genus one fibration and its Jacobian fibraton is a rational elliptic surface. So, Enriques surfaces are close to both K3 surfaces and rational surfaces. Also $b_2(S) = \rho(S) = 10$ for any Enriques surface. In spite of these facts, it has been shown that there is no Enriques surface automorphism $f$ such that $d_1(f) = \lambda_{10}$ (\cite[Theorem 1.2]{Og10a}). Since then, there are several works toward determination of the minimum Salem number realized as $d_1(f)$ of an Enriques surface automorphism $f$ (\cite{Do17}, \cite{Sh17}, \cite{MOR17}). The current best record is due to Dolgachev \cite{Do17}, which is
$$d_1(f) = \lambda_{{\rm Dol}} \approx 2.08101\,\, ,$$
where $\lambda_{{\rm Dol}}$ is the Salem number whose Salem polynomial is
$$1 - x -2x^2 - x^3 + x^4\,\, .$$

\medskip

Our main result is to show the following

\begin{theorem}\label{thm:main}
Let $\tau_8 \approx 1.58234$ be the Salem number whose Salem polynomial is
$$1 - x^2 -2x^3 - x^4 + x^6\,\, .$$
Then $\tau_8$ is the minimum Salem number which is realized as the first dynamical degree of an Enriques surface automorphism. That is, 
$$d_1(f) \geq \tau_8 \approx 1.58234$$
for any Enriques surface automorphism $f$ of positive entropy, and there are an Enriques surface $S$ and an automorphism $f \in {\rm Aut}\,(S)$ such that $d_1(f) = \tau_8$. 
\end{theorem}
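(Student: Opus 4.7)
The plan is to prove the lower bound by exhaustively ruling out a finite list of candidate Salem polynomials, and then to construct $(S,f)$ realizing $d_1(f)=\tau_8$. For any Enriques surface automorphism $f$, the pullback $f^*$ acts as an isometry of $\mathrm{Num}(S) \cong U \oplus E_8(-1)$ (rank $10$, signature $(1,9)$), with characteristic polynomial $\chi(x) = S(x) \cdot C(x)$, where $S(x)$ is the Salem polynomial of $d_1(f)$ (of some even degree $2d$) and $C(x)$ is a product of cyclotomic polynomials of total degree $10 - 2d$. Since the smallest degree-$4$ Salem number is $\lambda_4 \approx 1.72 > \tau_8$, only $\deg S \in \{6,8,10\}$ can possibly produce $d_1(f) < \tau_8$.

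For the lower bound, I would enumerate, via known tables of small Salem numbers, all Salem polynomials $S(x)$ of degrees $6$, $8$, $10$ with root $\tau < \tau_8$, a short finite list. For each candidate $S(x)$, I would enumerate all compatible cyclotomic complements $C(x)$ of the required degree and test three obstructions in order: (i) existence of an integral isometry of $U \oplus E_8(-1)$ with characteristic polynomial $S(x)C(x)$, controlled by Gram matrices and Nikulin's discriminant-form theory; (ii) extension to an isometry of the K3 lattice $U^3 \oplus E_8(-1)^2$ commuting with the Enriques involution $\iota^*$, which glues the given action on the invariant part to a compatible action on the anti-invariant part $U(2) \oplus E_8(-2)$; and (iii) preservation of an ample class, so that via the K3 Torelli theorem this isometry descends to an actual Enriques automorphism. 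Oguiso's theorem \cite{Og10a} already rules out $\lambda_{10}$ by essentially this pattern; the new content would extend such arguments uniformly to all remaining Salem polynomials below $\tau_8$.

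For the realization I would run the construction in reverse: exhibit an explicit isometry $g$ of $U \oplus E_8(-1)$ with characteristic polynomial $(x^6 - x^4 - 2x^3 - x^2 + 1) \cdot C(x)$ for a suitable cyclotomic $C(x)$ of degree $4$, choose a compatible anti-invariant partner so the combined action commutes with $\iota^*$, verify that an invariant ample class exists, and apply the K3 Torelli theorem to produce a K3 automorphism $\tilde f$ commuting with $\iota$ that descends to the desired $f \in \mathrm{Aut}(S)$ with $d_1(f) = \tau_8$.

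The main obstacle is the case-by-case elimination in the lower bound: many candidates $S(x)C(x)$ pass the purely lattice-theoretic obstruction (i) and must be ruled out via the finer gluing compatibility (ii) with $\iota^*$ or the positivity requirement (iii). In particular, the degree-$6$ Salem polynomials with root in $[\lambda_6,\tau_8)$ and the degree-$8$ Salem polynomials with root in $[\lambda_8,\tau_8)$ admit many combinations of cyclotomic complements yet leave little numerical slack, and a careful uniform discriminant-form treatment will be needed to close the gap.
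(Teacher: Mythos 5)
Your outline follows the same broad skeleton as the paper (split the K3 lattice into the $\sigma^*$-invariant and anti-invariant parts, glue, invoke Torelli, check ample-class preservation, eliminate a finite list of small Salem numbers, and reverse the construction for $\tau_8$), but as written it has genuine gaps. First, a bookkeeping error that matters: the anti-invariant part of $H^2(\tilde S,\Z)$ is $U\oplus U(2)\oplus E_8(-2)\cong U\oplus E_{10}(2)$, of rank $12$ and signature $(2,10)$; the lattice $U(2)\oplus E_8(-2)\cong E_{10}(2)$ you assign to it is the \emph{invariant} part (the pullback of ${\rm Num}(S)$). This is not cosmetic, because the transcendental lattice $T$ of the covering K3, of signature $(2,r)$, sits in the anti-invariant part, and the whole elimination has to quantify over the possible $T$'s (equivalently over Hodge structures), over the isometry of the anti-invariant part, and over all gluings --- this is exactly what the paper packages into the notion of Enriques quadruple (Definition \ref{def:triple}, Theorem \ref{thm:EnriquesK3}), together with the no-roots constraint on $T^\perp\cap {\rm NS}$ coming from Lemma \ref{lem:noroots}. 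Second, you give no reason why your search space is finite or effectively checkable. The two facts that make it so are missing from your plan: the restriction of the lifted automorphism to the anti-invariant part has \emph{finite order} (\cite{BP83}), and its characteristic polynomial is congruent to $(1+x)^2\chi_{f^*}(x)$ mod $2$ (Lemma \ref{lem:gL-}); without these, ``enumerate all compatible anti-invariant partners commuting with $\iota^*$'' is not a finite problem. (Relatedly, the paper's seven candidates below $\tau_8$ come from \cite{MOR17} after such filtering; your raw list of degree $6$, $8$, $10$ Salem numbers below $\tau_8$ is larger, and you would also need to justify completeness of the table you enumerate from.)

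Third, and most substantively, your step (iii) --- ``verify that an invariant ample class exists'' --- is treated as routine, but it is the crux. The chamber structure of the N\'eron--Severi lattice is cut out by infinitely many root hyperplanes, and there is no a priori finite procedure to decide whether a given isometry preserves a chamber; the paper has to prove a new positivity criterion reducing this to finitely many explicitly computable root sets $\sC$, $\sR_h$, $\sS_h$ (Theorem \ref{thm:positivity}, Algorithm \ref{alg:positivity}). Likewise, to list all isometries of $E_{10}$ with a prescribed Salem factor up to conjugacy you need the twist classification; McMullen's theorem \cite{Mc16} applies only to simple Salem polynomials, and two of the relevant ones ($\tau_4$ and $\tau_8$ itself) are not simple, which forces the pseudo-simple extension (Theorem \ref{thm:twist}). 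So while your ``uniform discriminant-form treatment'' is the right instinct, the argument as proposed would stall precisely at the points where the paper's new tools (Enriques quadruples, the positivity criterion, the pseudo-simple twist theorem, and the accompanying computer verification culminating in Theorem \ref{thm:minimum} for the realization of $\tau_8$) are required.
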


\begin{remark}\label{rmk:main} The Salem number $\tau_8$ in Theorem \ref{thm:main} is the 4th smallest Salem number in degree $6$. (See \cite{Mos} for the list of small Salem numbers of small degrees.)
\end{remark}

There are two issues to prove: (i) realizability of $\tau_8$ and (ii) unrealizability of the Salem numbers $\tau < \tau_8$. Once we establish (i),  it follows from a work of Matsumoto-Ohashi-Rams \cite{MOR17} that $\tau$ in (ii) has to be one of seven Salem numbers $\tau_i$ ($1 \le i \le 7$) listed in Table \ref{tab:8candidates} in the Appendix A. 

As in \cite{Mc16}, our method for both (i) and (ii) is a lattice theoretic one being based on the Torelli theorem for the covering K3 surfaces and the automorphism (lifted to the covering K3), and twists and glues of lattices arising from the Salem polynomials and cyclotomic polynomials. In this approach, among other things, our new results, Theorem \ref{thm:positivity} and Theorem \ref{thm:EnriquesK3}, are particularly important for us. They are also crucial to reduce the problem effectively to a computer algebra problem. We believe that these two theorems have their own interest and will be applicable for other problems. 

Let us explain a bit more about these two theorems. As in the case of K3 surface automorphisms (\cite{Mc16}, \cite{BG18}), one of the essential points in geometric realization of an automorphism from a Hodge isometry of the K3 lattice is the preservation of the ample or K\"ahler cone. In lattice theoretic terms, this is the notion of {\it positivity} introduced by McMullen \cite{Mc16} (see also Definition \ref{def:positive} for the precise definition). In general, it is very hard to check positivity. Theorem \ref{thm:positivity} is a new positivity criterion. Our statement of Theorem \ref{thm:positivity} is given in an equivalent form, so that it can be smoothly applied for both realizability and unrealizability. Our proof of Theorem \ref{thm:positivity} is entirely free from computer algebra. However, our resulting formulation is the one which fits well with computer algebra (see Algorithm \ref{alg:positivity}). Another new issue of realizability and unrealizability by an Enriques automorphism is to descend a candidate K3 surface automorphism to the original Enriques surface, i.e., commutativity of the covering involution and a candidate automorphism of the covering K3 surface. This makes our problem much more complicated than realizability or unrealizability by a K3 surface automorphism. To make this process clear and effective, we introduce a new notion, {\it Enriques quadruple} (Definition \ref{def:triple}). This notion is described purely in terms of lattices and their isometries in which the information of a given Salem number is encoded. Theorem \ref{thm:EnriquesK3} shows that the realizability of a prescribed Salem number $\tau$ as the first dynamical degree of an Enriques automorphism is equivalent to the existence of an Enriques quadruple with the same $\tau$. Our proof of this theorem is again entirely free from computer algebra. However, again, our resulting formulation is the one which fits well with computer algebra. 

We then use computer algebra to check the existence of Enriques quadruple with eight Salem numbers $\tau_{i}$ ($1 \le i \le 8$, see Table \ref{tab:8candidates} in Appendix A). This will be done in Sections \ref{sect:eight} and \ref{sect:nine}. It turns out that $\tau_i$ ($1 \le i \le 7$) are unrealizable (Section \ref{sect:nine}), while $\tau_8$ is realizable (Section \ref{sect:eight}). In this way, we complete the proof of Theorem \ref{thm:main}. All computer algebra programs, which are based on \cite{Mc11b}, and the outputs needed in our proof are available from the second named author's home page \cite{Yu}\footnote{enriques.zip with theorem numbers corresponding to the longer version \cite{OY18} of this paper}.   

We conclude Introduction by posing some open questions closely related to our main result. 

\begin{question}\label{q1}
Let $S$ be an Enriques surface with automorphism of the minimal positive entropy $\log \tau_8$. Can one describe nicely a projective model of (some nice) $S$ or a projective model of its covering K3 surface $\tilde{S}$? 
\end{question}

In our construction, the transcendental lattice $T_{\tilde{S}}$ of the covering K3 surface $\tilde{S}$ is ${\rm I}_{2, 2}(4)$ (Theorem \ref{thm:minimum}). See \cite{MO15} and \cite{Do17} for some explicit projective models of the covering K3 surfaces of Enriques surfaces with automorphisms of positive entropy. 

\begin{question}\label{q3}
How about positive characteristic $p > 0$?
\end{question}

As our method is based on the Torelli theorem for complex K3 surfaces, it cannot be applied to consider this question. See e.g. \cite{ES13}, \cite{EO15}, \cite{Xie15}, \cite{BC16}, \cite{BG18}, \cite{Yu18} for some work related to Salem numbers and surface automorphisms in positive characteristics. However, to our best knowledge, there seems no definite answer for a basic question: {\it if there is a Salem number which can be realized only by an automorphism of an Enriques surface in some special characteristic $p >0$, even for $p \ge 3$.}

\medskip

{\bf Acknowledgement.} We would like to express our thanks to Professors Simon Brandhorst, Igor Dolgachev, H\'el\`ene Esnault, Curtis T. McMullen and Hisanori Ohashi for very valuable discussions and encouragement. Our joint work started after hearing Professor Igor Dolgachev's talk on \cite{Do17} at Korea Institute for Advanced Study (KIAS). We also have had opportunities to collaborate at KIAS for two times later. We would like to express our thanks to KIAS for invitations and financial support during our stay, and the second named author would like to express his thanks to the University of Tokyo for invitation, at which the first version of this work has been completed. We also would like to express our thanks to the referees for their many valuable comments to improve the presentation.


\section{Lattices}\label{sect:two}
\noindent

In this section, we recall some basics on lattices which we will use in our paper (for more details see e.g. \cite{Ni80}, \cite{CS99}). Lemma \ref{lem:L(1/2)} will be frequently used in the sequel. 

A {\it lattice} $(L, (*,**))$ is a finitely generated free $\Z$-module $L$, endowed with a $\Z$-valued symmetric bilinear form $( *, **) =(*, **)_L$. For brevity, we often denote $(x,x)$ by $x^2$. We call $L$ an {\it even} (resp. {\it odd}) lattice if $x^2\in 2\Z$ for any $x\in L$ (resp.  $x^2\notin 2\Z$ for some $x\in L$). Let $(e_1,...,e_n)$ be a $\Z$-basis of $L$. We call $((e_i,e_j))_{1\leq i,j \leq n}$ the {\it Gram matrix} of $L$ with respect to $(e_1,...,e_n)$.  The {\it determinant} ${\rm det}(L)$ of $L$ is defined to be the determinant of any Gram matrix of $L$.  The lattice $L$ is non-degenerate if the symmetric bilinear form on $L$ is non-degenerate (equivalently, ${\rm det}(L)\neq 0$). For a sublattice $L^\prime \subset L$, we say $L^\prime$ is a {\it primitive }sublattice of $L$ if $(L^\prime \otimes \Q) \cap L=L^\prime$. If the signature of $L$, which we denote by ${\rm sig}\, L$, is $(1,n-1)$ and $n>1$, then $L$ is called a {\it hyperbolic lattice}. Here the first entry of the signature is the number of positive squares and the second that of negative ones. For a field $k$, we sometimes denote the $k$-linear space $L \otimes k$ by $L_{k}$. For a sublattice $M\subset L$  (resp. an element $x\in L$), we use $M^{\perp}_L$ (resp. $x^{\perp}_L$) to denote the orthogonal complement of $M$ (resp. $x$) in $L$ (we sometimes omit the subscript $L$ if there is no confusion).

\medskip

 For a nonzero $a\in \Q$, if $a(x,y)_{L}\in \Z$ for any $x,y\in L$, then  the lattice $L(a)$ is defined to be the same $\Z$-module as $L$ with the form given by $$(x,y)_{L(a)}:=a(x,y)_L.$$

\medskip

An element $x\in L$ is called a {\it root} if $x^2=-2$. A lattice is called a {\it root} lattice if it is generated by roots. We use  $A_{k}$ ($k\ge 1$), $D_{l}$ ($l\ge 4$), $E_{m}$ ($m=6,7,8$) to denote the negative definite root lattice whose basis is given by the corresponding Dynkin diagram. We use $U$ (resp. $E_{10}$) to denote the unique even unimodular hyperbolic lattice of rank 2 (resp. rank $10$). Let $r$ and $s$ be positive integers. We denote by ${\rm I}_{r, s}$ (resp. ${\rm II}_{r, s}$ with $r\equiv s\; \text{mod }8$), up to isomorphism, the unique odd (resp. even) unimodular lattice of signature $(r, s)$ (See \cite[Chapter V, Part I]{Se73}).

\medskip

For $f\in {\rm O}(L)$, i.e., for any isometry $f$ of the lattice $L$, we denote the characteristic polynomial ${\rm det}(xI-f)$ by $\chi_f(x)$. For any positive integer $k$, we denote the $k$-th cyclotomic polynomial by $\Phi_k(x)$.

\begin{definition}
Let $G$ be a finite abelian group. A {\it quadratic form} on $G$ is a map $$q:G\longrightarrow \Q/2\Z$$ together with a symmetric bilinear form $$b:G\times G\longrightarrow \Q/\Z$$ such that: 

1) $q(nx)=n^2q(x)$ for all $n\in \Z$ and $x\in G$, and 

2) $q(x+x^\prime)-q(x)-q(x^\prime)\equiv 2b(x,x^\prime) \;{\rm mod}\; 2\Z$ for all $x,x^\prime\in G.$ 

Note that,  a quadratic form $q$ on $G$ is uniquely determined by its restriction to the Sylow subgroups $G_p$ of $G$ (see \cite[Proposition 1.2.2]{Ni80}).

\medskip

The {\it length} of $G$, denoted by $l(G)$, is the minimum number of generators of $G$.

\medskip

Let $L$ be a non-degenerate even lattice. The bilinear form of $L$ determines a canonical embedding $L\hookrightarrow L^*={\rm Hom}(L, \Z)$, and we may view $L^*$ as a subset of $L\otimes \Q$. The quotient group $G(L):=L^*/L$ is finite abelian, and we call $G(L)$ the {\it glue group} of $L$, following \cite{Mc16}. For any $x\in L^*$, we use $\overline{x}$ to denote the image of $x$ in $L^*/L$ under the natural quotient map. The ($\Q$-valued) bilinear form on $L^*$ induced by $(*,**)_L$ gives a bilinear form $b_L$ and a quadratic form $q_L$ on $G(L)$ as follows $$ b_L: G(L)\times G(L)\longrightarrow \Q/\Z, \;\; b_L(\overline{x},\overline{y})\equiv (x,y)\; {\rm mod}\; \Z,$$ and $$q_L: G(L)\longrightarrow \Q/2\Z, \;\; q_L(\overline{x})\equiv (x,x)\; {\rm mod}\; 2\Z .$$ We call $q_L$ the {\it discriminant form}  of $L$. For any prime $p$, we use $q_{L,p}$ to denote the restriction of $q_L$ to the Sylow $p$-subgroup $G(L)_p$. Existence of an even lattice with given discriminant form and signature is characterized by \cite[Theorem 1.10.1]{Ni80}.

\medskip

If the glue group $G(L)$ is a $p$-elementary abelian group for some prime $p$, then we say $L$ is a {\it $p$-elementary} lattice (See \cite{RS89} for classification.)

\end{definition}

The following lemma tells us that if the Sylow $p$-subgroup of the glue group of an even lattice is $p$-elementary of maximal length, then the lattice comes from a ``simpler" even lattice. 

\begin{lemma}\label{lem:L(1/2)}
Let $L$ be a non-degenerate even lattice of rank $n$, and let $p$ be a prime number. Suppose $G(L)_p\cong \F_p^n$ ( if $p=2$, we require $b_L(x,x)=0\in \Q/\Z$ for any $x\in G(L)_2\cong \F_2^n$). Then

1) $\frac{1}{p}L\subset L^*$;

2) $L(\frac{1}{p})$ is a well-defined non-degenerate even lattice.
\end{lemma}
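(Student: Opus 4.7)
The plan is to reduce both statements to a single divisibility claim: every entry of any Gram matrix of $L$ lies in $p\Z$.

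First I would invoke the Smith normal form of a Gram matrix $A$ of $L$: write $A=UDV$ with $U,V\in\mathrm{GL}_n(\Z)$ and $D={\rm diag}(d_1,\ldots,d_n)$, so that $G(L)\cong\bigoplus_i \Z/d_i\Z$ and $G(L)_p\cong\bigoplus_i \Z/p^{v_p(d_i)}\Z$. The hypothesis $G(L)_p\cong\F_p^n$ forces $v_p(d_i)=1$ for every $i$; in particular each $d_i\in p\Z$, so every entry of $D$, and hence of $A=UDV$, lies in $p\Z$. By bilinearity this gives $(x,y)_L\in p\Z$ for all $x,y\in L$.

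Part (1) is now immediate: for $v\in L$, $(v/p,y)_L=(v,y)_L/p\in\Z$ for all $y\in L$, so $v/p\in L^*$, whence $\frac{1}{p}L\subset L^*$. For part (2), the rescaled pairing $(x,y)_{L(1/p)}:=(x,y)_L/p$ is then $\Z$-valued on $L$ and has Gram matrix $A/p$ with nonzero determinant $\det(A)/p^n$, so $L(1/p)$ is a well-defined non-degenerate lattice. Evenness when $p$ is odd is automatic: $(x,x)_L\in 2\Z\cap p\Z=2p\Z$. When $p=2$ the extra hypothesis is used as follows: the subgroup $\frac{1}{2}L/L\subset L^*/L$ has order $2^n=|G(L)_2|$ and is $2$-torsion, so it coincides with $G(L)_2$; hence for each $y\in L$ the class $\overline{y/2}$ lies in $G(L)_2$, and the hypothesis $b_L(\overline{y/2},\overline{y/2})=0$ in $\Q/\Z$ gives $(y,y)_L/4\in\Z$, so $(y,y)_L\in 4\Z$ and $(y,y)_{L(1/2)}\in 2\Z$.

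The only substantive step is translating the hypothesis that $G(L)_p$ is $p$-elementary of maximal rank into divisibility of the Gram matrix entries; Smith normal form handles this cleanly, and the remainder is bookkeeping, with the $p=2$ case requiring precisely the auxiliary condition on $b_L$ to upgrade even-ness of $L$ to even-ness of $L(1/2)$.
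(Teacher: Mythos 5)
Your proof is correct. The route you take for the key divisibility step differs from the paper's: you pass to the Smith normal form of a Gram matrix, read off that the hypothesis $G(L)_p\cong\F_p^n$ forces $v_p(d_i)=1$ for every elementary divisor, and conclude that $p$ divides every entry of the Gram matrix, which simultaneously yields $\tfrac1p L\subset L^*$ and the well-definedness of $L(\tfrac1p)$. The paper instead argues intrinsically in the glue group: it lifts $G(L)_p$ to a subgroup $M\subset L^*$ with $M/L=G(L)_p$, observes $M\subset\tfrac1p L$ because $G(L)_p$ is $p$-elementary, and compares orders ($|M/L|=p^n=|(\tfrac1p L)/L|$) to get $M=\tfrac1p L\subset L^*$; the divisibility $p\mid (x,y)_L$ is then a consequence rather than the starting point. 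The two arguments are equivalent in content (the elementary divisors of the Gram matrix encode exactly the structure of $G(L)$), and both are elementary; yours makes the matrix-level divisibility explicit up front, while the paper's avoids choosing a basis and stays in the language of discriminant groups used throughout the rest of the paper. Your treatment of evenness matches the paper's: the parity argument $2\Z\cap p\Z=2p\Z$ for odd $p$, and for $p=2$ the hypothesis $b_L(x,x)=0$ applied to $\overline{y/2}$, where your explicit identification $\tfrac12 L/L=G(L)_2$ spells out a point the paper uses implicitly.
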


\begin{proof}
1) Since $G(L)_p\cong \F_p^n$, then there exists a subgroup $M\subset L^*$ such that $M/L= G(L)_p$. Since $pm\in L$ for any $m\in M$, $M\subset \frac{1}{p}L\subset L^*$. On the other hand, $( \frac{1}{p}L)/L\cong \F_p^n$. Thus, $M= \frac{1}{p}L$.

 2)  Since $ \frac{1}{p}L\subset L^*$, it follows that $\frac{1}{p}(x,y)\in \Z$ for any $x,y \in L$, and hence $L(\frac{1}{p})$ is a well-defined lattice. Since $L$ is non-degenerate, $L(\frac{1}{p})$ is also non-degenerate. 
 
If $p\ge 3$. For any $x\in L(\frac{1}{p})$, since $(x,x)_{L}=p(x,x)_{L(\frac{1}{p})}$ and $(x,x)_{L}$ is even, it follows that $(x,x)_{L(\frac{1}{p})}$ is also even. Thus, $L(1/p)$ is an even lattice.
 
 If $p=2$ and $b_L(x,x)=0\in \Q/\Z$ for any $x\in G(L)_2\cong \F_2^n$, then for any $y\in L(\frac{1}{2})$, 
$$(y,y)_{L(\frac{1}{2})}=\frac{1}{2}(y,y)_{L}=2(\frac{y}{2},\frac{y}{2})_{L^*}\,\, $$ 
Since $b_L(\frac{\overline{y}}{2},\frac{\overline{y}}{2})=0\in \Q/\Z$,   it follows that $(\frac{y}{2},\frac{y}{2})_{L^*}$ is an integer. Thus, $(y,y)_{L(\frac{1}{2})}=2 (\frac{y}{2},\frac{y}{2})_{L^*}$ is an even integer. Thus, $L(\frac{1}{2})$ is an even lattice. \end{proof}


\section{Twists}\label{ss:twists}
\noindent

In this section, following \cite{Mc16}, we discuss lattice automorphisms canonically associated to irreducible reciprocal polynomials. Theorem \ref{thm:twist} 
below is a generalization of \cite[Theorem 5.2]{Mc16}. This generalization will be used to show unrealizability of $\tau_4$ (which is pseudo-simple but not simple) in Section \ref{sect:nine}.  

Let $P(x)\in \Z[x]$ be a monic irreducible reciprocal polynomial of even degree $d=2m$. A {\it $P(x)$-lattice } is a pair $(L, f)$ of a non-degenerate lattice $L$ and an isometry $f\in {\rm O}(L)$ such that the characteristic polynomial $\chi_{f}(x)$ of $f$ is equal to $P(x)$.

\medskip

Let $(L,f)$ be a $P(x)$-lattice. For any nonzero $a\in \Z[f+f^{-1}]$, the new bilinear form $$(v_1,v_2)_a= (a v_1,v_2)$$ defines the new $P(x)$-lattice $(L(a), f)$, and we call $(L(a), f)$ the {\it twist} of $(L,f)$ by $a$.

\medskip

Let $K$ be the number field $\Q[x]/(P(x))$, and let $R(x)$ be the {\it trace} polynomial of $P(x)$, i.e., $R(x)\in \Z[x]$ is the monic polynomial such that $P(x)=x^mR(x+x^{-1})$. We define $k = \Q[y]/(R(y))$. Then $k$ is a subfield of $K$ such that $[K:k] = 2$ under the natural inclusion $k \subset K$ given by $y = x + x^{-1}$. In particular, the extension $k \subset K$ is Galois under $\iota : x \mapsto x^{-1}$. 

The {\it principal} $P(x)$-lattice $(L_0,f_0)$ is defined by $$L_0=\Z[x]/(P(x))\subset K = \Q[x]/(P(x))$$ with the bilinear form $$(g_1,g_2)_{L_0}=\sum_{i=1}^{d} \frac{g_1(x_i)g_2(x_i^{-1})}{R^{\prime}(x_i+x_i^{-1})}\,\, ,$$ where $(x_i)_1^d$ are the roots of $P(x)$ and $R^{\prime}$ denotes the formal derivative of $R(x)$. The action $f_0 \in {\rm O}(L_0)$ is defined by multiplication by $x$. Then $L_0$ is an even lattice with $|{\rm det}(L_0)|=|P(-1)P(1)|$.

\medskip

 As in  \cite{Mc16}, we say $P(x)$ is {\it simple} if the class number of the number field $K$ is 1, $\sO_{K}=\Z[x]/(P(x))$, and $|P(-1)P(1)|$ is square free.  All small Salem numbers in \cite[Table 1]{Mc16} are simple.  In the way of determining the minimum positive entropy of automorphisms of Enriques surfaces, it turns out that we need to consider Salem numbers which are not simple. We say $P(x)$ is {\it pseudo-simple} if the class number of the number field $K$ is 1, $ \sO_{K}=\Z[x]/(P(x))$, and there exists a $P(x)$-lattice $(L^{\prime},f^{\prime})$ such that $|{\rm det}(L^{\prime})|$ is square free. Thus,  $P(x)$ is pseudo-simple if it is simple.

 \begin{remark}
 The polynomial $x^2+1$ is pseudo-simple but not simple, and the following $(x^2+1)$-lattice 
$$L = (\Z e_1 \oplus \Z e_2, ((e_i,e_j)) = \begin{pmatrix} 
    1&0 \\ 
    0&1\\ 
     \end{pmatrix}),\;\; f = 
    \begin{pmatrix} 
    0&-1 \\ 
    1&0\\ 
     \end{pmatrix}\,\, $$ is odd. This example tells us that $L$ can be  an odd lattice.

 \end{remark}

 \medskip
 
 Our main result of this section is the following

      \begin{theorem}\label{thm:twist}
Let  $P(x)$ be a pseudo-simple monic irreducible reciprocal polynomial. Let  $(L,f)$ be a $P(x)$-lattice such that $|{\rm det}(L)|$ is square free. Then every $P(x)$- lattice is isomorphic to a twist $(L(a), f)$ of $(L,f)$, where $a\in \Z[f+f^{-1}] $.
 \end{theorem}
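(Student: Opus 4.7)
I would follow the strategy of \cite[Theorem 5.2]{Mc16}, replacing the role of the principal lattice $(L_0, f_0)$ by the given $(L, f)$ and using the squarefree hypothesis on $|\det L|$ in place of the condition "$|P(\pm 1)|$ squarefree" from the simple case. Let $K = \Q[x]/(P(x))$, $k = \Q[y]/(R(y))$ with $y = x + x^{-1}$, and $\iota: x\mapsto x^{-1}$ the nontrivial element of $\Gal(K/k)$. By pseudo-simplicity, $\sO_K = \Z[x]/(P(x))$ has class number one and $\sO_k = \Z[y] = \Z[f + f^{-1}]$.

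For any $P(x)$-lattice $(L', f')$, the identification $L'\otimes\Q \simeq K$ via $f'$ makes $L'$ a fractional $\sO_K$-ideal, necessarily principal, say $L' = \beta'\sO_K$. Every $f$-invariant $\Q$-valued symmetric bilinear form on $K$ has the unique shape $B_\alpha(g_1, g_2) = \Tr_{K/\Q}(\alpha\, g_1\, \iota(g_2))$ with $\alpha\in k$ ($f$-invariance is automatic since $x\iota(x)=1$; symmetry forces $\alpha\in K^\iota = k$). Transporting the forms of $L, L'$ to $\sO_K$ via $\beta^{-1}, \beta'^{-1}$ produces $(\sO_K, B_{\alpha_L})$ and $(\sO_K, B_{\alpha_{L'}})$, and comparing them gives $(L', f') \cong (L(a), f)$ with $a = \alpha_{L'}/\alpha_L\in k$. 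It remains to promote $a$ to $\sO_k$.

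The $\Z$-valuedness of $B_\alpha$ on $\sO_K$ is equivalent, via the trace tower, to $\alpha\in\sO_K^\vee\cap k = J^{-1}R'(y)^{-1}\sO_k$, where $J := \Tr_{K/k}(\sO_K) \subseteq\sO_k$ is the relative trace ideal; equivalently $c := \alpha R'(y)\in J^{-1}$. A direct comparison with the principal lattice $L_0 = (\sO_K, B_{1/R'(y)})$ (of determinant $|P(-1)P(1)|$) yields
\[
|\det(\sO_K, B_{c/R'(y)})| = N_{k/\Q}((c))^2\cdot|P(-1)P(1)|.
\]
The key ideal identity, verified directly from the fact that $\theta = x$ has minimal polynomial $t^2 - yt + 1$ over $\sO_k$, is
\[
\mathfrak{d}_{K/k} = (y^2-4)\sO_k \subseteq (2,y)^2 = J^2,
\]
so $\mathfrak{d}_{K/k} = J^2\mathfrak{e}$ for some integral ideal $\mathfrak{e}\subseteq\sO_k$. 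Writing $(c_L) = \mathfrak{b}_L\, J^{-1}$ with $\mathfrak{b}_L\subseteq\sO_k$ integral, the determinant formula applied to $(L,f)$ becomes $|\det L| = N(\mathfrak{b}_L)^2\cdot N(\mathfrak{e})$. Since $|\det L|$ is squarefree, any prime $p$ dividing $N(\mathfrak{b}_L)$ would force $p^2\mid |\det L|$, a contradiction; hence $\mathfrak{b}_L = \sO_k$, i.e., $(c_L) = J^{-1}$.

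For any $P(x)$-lattice $(L', f')$, $c_{L'}\in J^{-1} = (c_L)$, so $a = c_{L'}/c_L\in\sO_k = \Z[f+f^{-1}]$, as desired. The main technical obstacles will be establishing the explicit determinant formula and the ideal inclusion $\mathfrak{d}_{K/k}\subseteq J^2$; both reduce to algebraic manipulations with $t^2 - yt + 1$ once $\sO_K = \sO_k[\theta]$ (a consequence of pseudo-simplicity) is in place, and the rest of the argument is then lattice-theoretic bookkeeping along the lines of \cite{Mc16}.
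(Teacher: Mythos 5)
Your argument is correct, and although it starts from the same Gross--McMullen framework as the paper (class number one and $\sO_K=\Z[x]/(P(x))$ identify every $P(x)$-lattice with $\sO_K$, every invariant form is $\Tr_{K/\Q}(\alpha\, g_1\iota(g_2))$ with $\alpha\in k$, so the two lattices differ by the twist $a=\alpha_{L'}/\alpha_L\in k$), the key step of promoting $a$ to $\sO_k$ is carried out by a genuinely different mechanism. The paper never computes the trace ideal or any different/discriminant: it uses the isomorphism $L\cong bL^*$ with $|N_{\Q}^K(b)|=|\det L|$ square free, writes $a=c/d$ with $c,d$ coprime in the PID $\sO_K$, observes that the denominator $d$ generates an ideal extended from $\sO_k$, so that $|N_{\Q}^K(d)|$ is a perfect square dividing the square-free $|N_{\Q}^K(b)|$, and concludes that $d$ is a unit. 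You instead determine the relevant fractional ideal exactly: integrality gives $\alpha_L\in\sO_K^\vee\cap k=J^{-1}R'(y)^{-1}\sO_k$ with $J=\Tr_{K/k}(\sO_K)=(2,y)$, and your determinant identity together with the inclusion $(y^2-4)\sO_k\subseteq J^2$ pins down $(\alpha_L R'(y))=J^{-1}$, after which integrality of $\alpha_{L'}R'(y)$ for an arbitrary $P(x)$-lattice immediately yields $a\in\sO_k=\Z[f+f^{-1}]$. Two small points of hygiene: what you call $\mathfrak{d}_{K/k}$ is the relative discriminant, an ideal of $\sO_k$ (the different is $(2x-y)\sO_K$), and your bookkeeping silently uses $|P(1)P(-1)|=N_{k/\Q}\big((y^2-4)\big)$, a one-line check via $P(1)P(-1)=\prod_j(x_j^2-1)$. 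Your route costs these extra (routine) verifications plus $\sO_K=\sO_k[x]$, but it buys more precise information: it shows that the trace ideal $(2,y)$ must be principal and that the square-free determinant is forced to equal $N_{k/\Q}\big((y^2-4)\big)/N_{k/\Q}(J)^2$, whereas the paper's divisibility argument is shorter and stays entirely upstairs in $\sO_K$.
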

 
 \begin{proof}
 Since the class number of $K$ is 1 and $\Z[f]$ is the full ring of integers, the inner product on the $P(x)$-lattice $(L,f)$ determines an isomorphism 
$$L \cong b L^* \subset L^*$$ 
for some $b\in \sO_K$ satisfying $$|N_{\Q}^K(b)|= |{\rm det}\, (L)|.$$ By the assumption on $L$, this norm is square-free.
 
 Let $(L^{\prime},f^{\prime})$ be another $P(x)$-lattice. Since $\sO_K\cong \Z[x]/(P(x))$ is a PID, $(L^{\prime},f^{\prime})\cong (L,f)$ as $\sO_K$-modules. Then the inner product of $L^{\prime}$ is of the form 
$$(g_1,g_2)_{L^{\prime}} = (ag_1,g_2)_{L}$$ for some element $a\in k$ (see \cite[Page 276, Remark]{GMc02}). Since $a\in L^*\cong \frac{1}{b}L$, it follows that $a\in b^{-1}\sO_K$.
 
We claim $a\in \sO_k$, the ring of algebraic integers in $k$. In fact, we may write  $a\sO_k=IJ^{-1}$, where $I$ and $J$ are relatively prime ideals in $\sO_k$. We can also write $a=c/d$, where $c$ and $d$ are relatively prime elements of $\sO_K$. Then $d\sO_K=J\sO_K$, and hence $$|N_{\Q}^K(d)|=|N_{\Q}^k(J)^2|.$$  Since $d|b$, $N_{\Q}^k(J)^2$ divides $N_{\Q}^K(b)$ which is square free. Thus $d$ is a unit. Then $a\in k\cap \sO_K=\sO_k$. Note that $\sO_K = \Z[f]$ and therefore 
$$\sO_k = \Z[f]^{\iota} = \Z[f+f^{-1}]$$
by definition of $\iota$. Thus, $a\in \Z[f+f^{-1}]$. \end{proof}

 \begin{remark}\label{rmk:psimple}
      Let $\tau_1<\cdots <\tau_8$ be the eight Salem numbers $\# 1- \# 8$ in \cite[Appendix]{MOR17} (See also Table \ref{tab:8candidates} in Appendix A). Then it turns out that $\tau_i$ is simple (resp. pseudo-simple but not simple) for $i=1,2,3,5,6,7$ (resp. $i=4,8$), as one can verify using computer algebra.
 \end{remark}

 We close this section by recalling the notions of feasible prime and Salem factor from \cite{Mc16} and their relations with isometries of the lattice $E_{10}$ (instead of the K3 lattice ${\rm II}_{3, 19}$). Recall that $E_{10}\cong {\rm II}_{1,9}$ the unique even unimodular hyperbolic lattice of rank $10$ up to isomorphism (see e.g. \cite{Do17} for close relations between $E_{10}$ and Enriques surfaces).
 
Let $\tau$ be a Salem number with Salem polynomial $S(x)$ of degree $2d$. We are interested in the conditions for realizability of $\tau$ by isometries of $E_{10}$, which is important in our study of Enriques quadruples (See Section \ref{sect:seven}).

Let $p\in \Z$ be a prime. We say $p$ is a {\it feasible prime} for $S(x)$ if  
\begin{equation}\label{eq:feasible}
p|N=\prod_{\phi (k)\le 10-2d} {\rm res}(S(x),\Phi_k(x)). 
\end{equation}
The difference between this definition and that in \cite{Mc16} comes from the fact that ${\rm rk}\, (E_{10})=10$, while ${\rm rk}\, ({\rm II}_{3, 19}) =22$.

For any positive integer $n$, we use $D(n)$ to denote the minimum $D\ge 0$ such that $\Z^D$ admits an automorphism of order $n$. It satisfies $D(1)=0$, $D(2)=1$, and $D(n)=D(n/2)$ if $n>2$ is even but $n/2$ is odd. In all other cases, we have $$D(p_1^{m_1}\cdots p_s^{m_s})=\sum \phi (p_i^{m_i})$$ for the prime factorization of $n$.

Let $g$ be an isometry of $E_{10}$ such that a Salem number $\tau$ is an eigenvalue of $g$. Then $\chi_g(x)=S(x)C(x)$ for some product $C(x)$ of cyclotomic polynomials (see for instance \cite[Proposition 3.1]{EOY16}). Let 
$$L:={\rm Ker}(S(g))\subset E_{10}\,\, .$$ 
We call $g|L$ the {\it Salem factor} of $g$.

\begin{theorem} (\cite[Theorem 6.2]{Mc16})\label{thm:GluePeriod}
Let $f:L\longrightarrow L$ be the Salem factor for an isometry of $E_{10}$ such that $\chi_f(x)=S(x)$. Then:

1) The integer $|G(L)|$ is divisible only by the feasible primes for $S(x)$;

2) The order $n$ of the natural map $\bar{f}|G(L)$ induced by $f$ satisfies 
$$D(n)\le 10-{\rm deg}(S(x))\,\, ;$$

3) There exists a product of distinct cyclotomic polynomials $C(x)$ such that 
$$C(\bar{f}|G(L))=0\,\, ,\,\, {\rm deg}(C(x))\le 10-{\rm deg}(S(x)).$$
\end{theorem}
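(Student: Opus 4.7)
The plan is to use orthogonal decomposition with respect to the unimodular lattice $E_{10}$. Since $S(x)$ is irreducible and coprime to the cyclotomic factor $C(x)$ of $\chi_g(x)=S(x)C(x)$, the sublattices $L=\ker S(g)$ and $M=\ker C(g)$ are mutually orthogonal primitive sublattices of $E_{10}$ with $L\oplus M$ of finite index in $E_{10}$, and $\text{rk}(M)\le 10-\deg S(x)$. The isometry $g$ restricts to $f=g|L$ and $h=g|M$, with $\chi_h(x)$ a product of cyclotomic polynomials dividing $C(x)$.

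The central tool I would use is the \emph{glue correspondence}: since $E_{10}$ is unimodular, the inclusion $L\oplus M\subset E_{10}$ gives a subgroup $\Gamma\subset G(L)\oplus G(M)$ which is the graph of an anti-isometry $\phi:G(L)\to G(M)$ (using that $E_{10}/(L\oplus M)$ must be isotropic and the orthogonal projections are injective on it). In particular there is a canonical group isomorphism $G(L)\cong G(M)$, and the isometries $\bar f$ on $G(L)$ and $\bar h$ on $G(M)$ correspond under this isomorphism, because $g$ preserves $E_{10}$ and hence its descent to the glue.

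Given this correspondence, the three assertions would be proved as follows. For (1), one reduces to bounding the primes dividing $|G(M)|$, which divides $|\det M|$; since $\chi_h(x)=\prod \Phi_{k_i}(x)^{m_i}$ with $\sum m_i\phi(k_i)=\text{rk}(M)\le 10-\deg S(x)$, a standard computation (essentially comparing $M$ to principal $\Phi_{k_i}$-lattices as in Section~\ref{ss:twists}) shows that any prime dividing $|\det M|$ must divide some $\text{res}(S(x),\Phi_{k_i}(x))$ with $\phi(k_i)\le 10-\deg S(x)$, i.e.\ is feasible. For (2), the order of $\bar f$ equals the order of $\bar h$, which divides the order of $h$ on $M$; the latter is the lcm of those $k_i$ with $\Phi_{k_i}\mid\chi_h$, and $D$ of that lcm is bounded by the total degree $\sum \phi(k_i)\le 10-\deg S(x)$ by the definition of $D(n)$. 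For (3), take $C(x)=\prod_i \Phi_{k_i}(x)$ the squarefree radical of $\chi_h(x)$; then $C(h)=0$ on $M_\Q$, in particular on $M^*/M=G(M)$, and transport under the glue gives $C(\bar f|G(L))=0$ with $\deg C(x)\le \deg \chi_h(x)\le 10-\deg S(x)$.

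The main technical obstacle is the precise verification in step (1) that the feasibility condition \eqref{eq:feasible} captures exactly the primes that can appear in $|G(M)|$. This requires analyzing $M$ as a $\Z[h]$-module: after tensoring with $\Q$ it splits according to the distinct $\Phi_{k_i}$-isotypic components, and primes dividing the discriminant of such a component are precisely those dividing $\text{res}(\Phi_{k_i},\Phi_{k_j})$ (cross-terms) or dividing the discriminant of a single $\Phi_{k_i}$-lattice, which in turn divide $k_i$ and hence $\text{res}(S,\Phi_{k_i})$ once one tracks the glue with the Salem component $L$. The other steps are formal once the glue correspondence is set up cleanly.
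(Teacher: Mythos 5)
Your overall architecture---split $E_{10}$ into the primitive orthogonal pair $L=\ker S(g)$ and $M=\ker C(g)$, use unimodularity of $E_{10}$ to obtain a full, $g$-equivariant anti-isometry $G(L)\cong G(M)$, and bound everything by $\mathrm{rk}(M)\le 10-\deg S(x)$---is the same as in the cited source \cite{Mc16}, which the paper invokes without reproving. Your arguments for 2) and 3) are essentially correct, except that you silently use that $h=g|M$ has finite order and is semisimple (so that its order is $\mathrm{lcm}(k_i)$ and the radical $\prod_i\Phi_{k_i}(x)$ annihilates it). This does hold here, but only because $M$ is negative definite: $L$ already carries the signature $(1,\deg S(x)-1)$ part of $E_{10}$, so $\mathrm{O}(M)$ is finite. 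That observation should be made explicit; it is in fact a simplification specific to the $E_{10}$ situation (in the K3 lattice the complementary factor is not definite).

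The genuine gap is in 1), precisely at the step you yourself flag as the main obstacle. The claim that every prime dividing $|\det M|$ divides some $\mathrm{res}(S(x),\Phi_{k_i}(x))$ cannot be established by analyzing $M$ intrinsically as a $\Z[h]$-module, and the justification you sketch is incorrect on two counts: a lattice with isometry of cyclotomic characteristic polynomial can be twisted so that its determinant acquires arbitrary prime factors (twists multiply the determinant by a norm, see Section \ref{ss:twists}), so primes dividing the discriminant of a $\Phi_{k_i}$-component need not divide $k_i$; and primes dividing $k_i$ need not divide $\mathrm{res}(S(x),\Phi_{k_i}(x))$, e.g.\ $|\mathrm{res}(\Phi_3(x),S_3(x))|=1$ as computed in Lemma \ref{lem:1.42}. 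The constraint is extrinsic and comes exactly from the glue with the Salem factor: since $E_{10}$ is unimodular, the glue is along all of $G(L)\cong G(M)$, so the finite $\Z[\bar f]$-module $G(L)$ is annihilated by $S(\bar f)$ (Cayley--Hamilton on $L$, applied to $L^*$ and the quotient) and, transporting $\chi_h(\bar h)=0$ on $G(M)$ through the equivariant glue isomorphism, also by $\chi_h(\bar f)$. Writing $\mathrm{res}(S,\chi_h)=a(x)S(x)+b(x)\chi_h(x)$ with $a,b\in\Z[x]$ shows that the integer $\mathrm{res}(S,\chi_h)=\pm\prod_i\mathrm{res}(S,\Phi_{k_i})^{m_i}$ kills $G(L)$; since a prime divides $|G(L)|$ iff it divides the exponent of $G(L)$, every such prime divides some $\mathrm{res}(S(x),\Phi_{k_i}(x))$ with $\phi(k_i)\le\mathrm{rk}(M)\le 10-\deg S(x)$, hence is feasible. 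This is the same mechanism as \cite[Proposition 4.2]{Mc16}, already used in the proof of Lemma \ref{lem:sylowp}. With this replacement for step 1), and the definiteness remark above, your proof closes.
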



\section{Glue} 
\noindent
In this section, we discuss gluing of lattices and isometries, and controlling glue groups via resultants. We refer to \cite[Section 2]{Mc11a}, \cite[Section 4]{Mc16} for more details. Our main result of this section is Theorem \ref{thm:L2withroots}.

\medskip

Let $L_i$ ($i=1,2$) be non-degenerate lattices. Let $H_i$ be a subgroup of $G(L_i)$. We say a map $\phi: H_1\longrightarrow H_2$ is a {\it gluing map} if 1) $\phi$ is an isomorphism of abelian groups, 2) $b_{L_1}(x,y)=-b_{L_2}(\phi(x),\phi(y))$ for any $x,y\in H_1$. 

For any gluing map $\phi:H_1\longrightarrow H_2$,  we define the lattice $L_1\oplus_{\phi}L_2$ by \begin{equation}\label{eq:glue} L_1\oplus_{\phi}L_2:=\{(x,y)\in L_1^*\oplus L_2^*  | \ \overline{x}\in H_1, \overline{y}\in H_2, \text{ and } \phi(\overline{x})=\overline{y}  \}\subset (L_1\oplus L_2)\otimes \Q.
\end{equation}Clearly $L_1\oplus L_2$ is a sublattice of $L_1\oplus_{\phi}L_2$, and $L_i$ is a primitive sublattice of $L_1\oplus_{\phi}L_2$, $i=1,2$. Moreover, $${\rm det}(L_1){\rm det}(L_2)={\rm det}(L_1\oplus_{\phi}L_2)|H_1|^2.$$

\medskip

For a lattice $L \supset L_1\oplus L_2$ of rank ${\rm rk}(L_1)+{\rm rk}(L_2)$, we say $L$ is a {\it primitive extension} of $L_1$ and $L_2$ if both $L_1$ and $L_2$ are primitive sublattices of $L$.  Any primitive extension of $L_1$ and $L_2$ appears as $L_1\oplus_{\phi} L_2$, in other words, any primitive extension of  $L_1$ and $L_2$ can be obtained by {\it gluing} $L_1$ and $L_2$ via a gluing map. 

\medskip

Any isometry $f: L_1\longrightarrow L_2$ of lattices naturally induces an isomorphism of glue groups: $$\overline{f}: G(L_1)\longrightarrow G(L_2).$$ Moreover, $b_{L_1}(x,y)=b_{L_2}(\overline{f}(x),\overline{f}(y))$ for any $x,y\in G(L_1)$.

\medskip

Let $f_i\in {\rm O}(L_i)$, $i=1,2$. If a gluing map $\phi: H_1\longrightarrow H_2$ satisfies 
$$\overline{f_i}(H_i) = H_i\,\, (i=1, 2)\,\, {\rm and}\,\, \phi\circ \overline{f_1}=\overline{f_2}\circ \phi\,\, ,$$
 then $f_1\oplus f_2$ can be naturally extended to an isometry $f_1\oplus_{\phi} f_2\in {\rm O}(L_1\oplus_{\phi}L_2)$. Conversely, for any primitive extension $L$ of $L_1$ and $L_2$, if an isometry $f\in {\rm O}(L)$ satisfies $f(L_i)=L_i$, $i=1,2$, then $f$ must appear as $f_1\oplus_{\phi}f_2$.

\medskip

The following lemma characterizes the Sylow $p$-subgroup (for certain primes $p$) of the glue group and the discriminant-form of the lattice obtained by gluing two isometries.

\begin{lemma}\label{lem:sylowp}
Let $p$ be a prime number. Let $f\in {\rm O}(L)$ be an isometry of a non-degenerate lattice $L$. Let $L_1$ be a primitive $f$-stable non-degenerate sublattice. 
We set 
$$L_2:=L_1^{\perp}\subset L\,\,, \,\, f_i:=f|L_i\,\, (i=1,2).$$ Suppose that 
$$p \nmid  {\rm res}(\chi_{f_1}(x),\chi_{f_2}(x))\,\, ,$$ 
i.e., the resultant of the two polynomials $\chi_{f_1}(x)$ and $\chi_{f_2}(x)$ is not divided by $p$. Then there exists an isomorphism of abelian groups $$\psi: G(L_1)_p\oplus G(L_2)_p\longrightarrow G(L)_p$$ such that 
$$\bar{f} \circ \psi=\psi \circ (\bar{f_1}\oplus\bar{f_2})$$ 
and $\psi$ is an isomorphism between the quadratic forms $q_{L_1,p}\oplus q_{L_2,p}$ and $q_{L,p}$. 
\end{lemma}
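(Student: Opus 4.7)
The plan is to show that under the resultant hypothesis the gluing subgroups have trivial Sylow $p$-part, from which the desired $\psi$ drops out of the standard primitive-extension formalism.

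First I would express $L$ as a primitive extension $L_1\oplus_{\phi}L_2$ for a unique gluing map $\phi:H_1\to H_2$ between subgroups $H_i\subset G(L_i)$ as in \eqref{eq:glue}. Writing $\Gamma_{\phi}\subset G(L_1)\oplus G(L_2)$ for the graph of $\phi$, the standard identifications read
\[
L/(L_1\oplus L_2)=\Gamma_{\phi},\quad L^*/(L_1\oplus L_2)=\Gamma_{\phi}^{\perp},\quad G(L)=\Gamma_{\phi}^{\perp}/\Gamma_{\phi},
\]
where $\perp$ is taken with respect to $b_{L_1}\oplus b_{L_2}$ on $G(L_1)\oplus G(L_2)$. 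Under these identifications $\bar f$ corresponds to $\bar{f_1}\oplus \bar{f_2}$ restricted to $\Gamma_{\phi}^{\perp}$ (which it preserves by the $f$-stability of $L_1$), and $q_L$ is induced from $q_{L_1}\oplus q_{L_2}$.

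The main step is to verify $H_{1,p}=H_{2,p}=0$. The Cayley--Hamilton relation yields $\chi_{f_i}(\bar{f_i})=0$ on $G(L_i)$. Because $\phi$ intertwines $\bar{f_1}|H_1$ with $\bar{f_2}|H_2$, we also obtain that $\chi_{f_1}(\bar{f_2})$ kills $H_{2,p}$. From the B\'ezout relation
\[
A(x)\chi_{f_1}(x)+B(x)\chi_{f_2}(x)={\rm res}(\chi_{f_1},\chi_{f_2})\in\Z,
\]
with $A,B\in\Z[x]$ (available because $\chi_{f_1}$ and $\chi_{f_2}$ are monic integer polynomials), substituting $x=\bar{f_2}$ shows that multiplication by the integer ${\rm res}(\chi_{f_1},\chi_{f_2})$ annihilates $H_{2,p}$. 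Since $H_{2,p}$ is a finite $p$-group, any integer prime to $p$ acts invertibly on it, so the hypothesis $p\nmid {\rm res}(\chi_{f_1},\chi_{f_2})$ forces $H_{2,p}=0$. The analogous argument, using $\chi_{f_2}(\bar{f_1})|H_{1,p}=0$ via $\phi^{-1}$, yields $H_{1,p}=0$.

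With $\Gamma_{\phi,p}=0$, taking $p$-primary components commutes with the orthogonal complement inside the finite abelian group $G(L_1)\oplus G(L_2)$, so $(\Gamma_{\phi}^{\perp})_p=G(L_1)_p\oplus G(L_2)_p$. Hence
\[
G(L)_p=(\Gamma_{\phi}^{\perp}/\Gamma_{\phi})_p=G(L_1)_p\oplus G(L_2)_p,
\]
and this identification, being inherited from the inclusion $L^*\subset L_1^*\oplus L_2^*$, is automatically equivariant for $\bar f$ versus $\bar{f_1}\oplus\bar{f_2}$ and matches $q_{L,p}$ with $q_{L_1,p}\oplus q_{L_2,p}$, providing the required $\psi$. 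The only point that needs care is to argue integrally rather than rationally, so that the resultant --- and not merely a rational multiple of it --- is the operator killing the $p$-torsion; once this is set up, the remainder is a short bookkeeping in the glue formalism.
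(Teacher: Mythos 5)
Your proof is correct, and it follows the same overall skeleton as the paper's: realize $L$ as a primitive extension $L_1\oplus_{\phi}L_2$ with $f=f_1\oplus_{\phi}f_2$, show that the glue group has trivial $p$-part, and then read off $\psi$. The differences are in the two supporting steps. Where the paper simply quotes \cite[Proposition 4.2]{Mc16} to conclude $p\nmid |H_1|$, you re-derive that input from scratch: Cayley--Hamilton on $L_i^*$ gives $\chi_{f_i}(\bar{f_i})=0$ on $G(L_i)$, the $f$-equivariance of $\phi$ (which holds because $f(L_i)=L_i$ forces $f=f_1\oplus_{\phi}f_2$) transports $\chi_{f_1}$ to an annihilator of $H_2$, and the integral B\'ezout identity $A\chi_{f_1}+B\chi_{f_2}={\rm res}(\chi_{f_1},\chi_{f_2})$ shows the resultant kills $H_2$, whence $H_{2,p}=H_{1,p}=0$ under the hypothesis $p\nmid {\rm res}$. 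And where the paper writes down $\psi(\bar{x_1},\bar{x_2})=\overline{x_1+x_2}$ and checks by hand that $x_1+x_2\in L^*$ (the $(x_1,v)\in\frac{1}{|H_1|}\Z\cap\frac{1}{p^m}\Z=\Z$ computation), you package the same verification in the graph formalism: $L/(L_1\oplus L_2)=\Gamma_{\phi}$, $L^*/(L_1\oplus L_2)=\Gamma_{\phi}^{\perp}$, $G(L)=\Gamma_{\phi}^{\perp}/\Gamma_{\phi}$, together with the observation that primary components of coprime orders pair trivially in $\Q/\Z$, so $(\Gamma_{\phi}^{\perp})_p=G(L_1)_p\oplus G(L_2)_p$ once $\Gamma_{\phi,p}=0$. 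Your route buys self-containedness and makes completely transparent where the resultant hypothesis enters; the paper's route is shorter by one citation, and its explicit element-level check is essentially the same coprimality argument carried out pointwise. In both treatments the equivariance of $\psi$ and the identity $q_{L,p}=q_{L_1,p}\oplus q_{L_2,p}$ are immediate because $L_1\perp L_2$ kills all cross terms.
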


\begin{proof}

By the assumptions, $L$ is a primitive extension of $L_1$ and $L_2$, and $f(L_i)=L_i$, $i=1,2$. Thus, there exists a gluing map $\phi : H_1\longrightarrow H_2$, for some $H_i\subset G(L_i)$, $i=1,2$, such that $L=L_1\oplus_{\phi}L_2$ and $f=f_1\oplus_{\phi}f_2$. Since $p \nmid {\rm res}(\chi_{f_1}(x),\chi_{f_2}(x))$, by \cite[Proposition 4.2]{Mc16}, $p\nmid |H_1|$. Then the map $$\psi: G(L_1)_p\oplus G(L_2)_p\longrightarrow G(L)_p$$ given by $\psi(\bar{x_1},\bar{x_2})=\overline{x_1+x_2}$ is well-defined, where $x_i\in L_i^*$ satisfying $\bar{x_i}\in G(L_i)_p$, $i=1,2$. (In fact, let $v\in L=L_1\oplus_{\phi}L_2$, then, by (\ref{eq:glue}), $|H_1|\cdot v=v_1+v_2$ for some $v_i\in L_i$. Thus, $(x_1,v)=\frac{1}{|H_1|}(x_1,v_1)\in\frac{1}{|H_1|}\Z$. On the other hand, $x_1=\frac{1}{p^m}y_1$ for some $m\ge0$ and $y_1\in L_1$, which implies $(x_1,v)=\frac{1}{p^m}(y_1,v)\in \frac{1}{p^m}\Z$. Then $(x_1,v)\in (\frac{1}{|H_1|}\Z\cap\frac{1}{p^m}\Z)=\Z$ (since $p^m$ and $|H_1|$ are coprime). Thus, $x_1\in L^*$. Similarly, $x_2\in L^*$.) Then $\bar{f} \circ \psi=\psi \circ (\bar{f_1}\oplus\bar{f_2})$, and 
$$q_{L_1,p}(\bar{x_1})+q_{L_2,p}(\bar{x_2})=q_{L,p}(\overline{x_1+x_2})=q_{L,p}(\psi(\bar{x_1},\bar{x_2}))$$ for any $\overline{x_i}\in G(L_i)_p$. Since $p\nmid |H_1|$, it follows that $|G(L)_p|=|G(L_1)_p|\cdot |G(L_2)_p|$ and $\psi$ is an isomorphism. \end{proof}

\medskip

In the process of ruling out Salem numbers in Section \ref{sect:nine}, we often face the following problem: for an isometry $f\in {\rm O}(U\oplus E_{10}(2))$ of finite order with characteristic polynomial $C_1(x)C_2(x)$, where $C_1(x)$ and $C_2(x)$ are coprime polynomials in $\Z[x]$, what can we say about invariants (e.g., glue group, signature) of the sublattice ${\rm Ker}(C_i(f))$, $i=1,2$? Motivated by this, we consider the following

\begin{set-up}\label{set-up}  {\it Let $L_i$ ($i=1,2$) be a non-degenerate lattice of rank $r_i$, and let $f_i\in {\rm O}(L_i)$ ($i=1,2$) be an isometry of finite order $n_i$ such that $n_1$ and $n_2$ are coprime. Suppose $H_i\subset G(L_i)$ is a subgroup satisfying $\bar{f_i}(H_i)=H_i$, and suppose  there is a gluing map $\phi: H_1\longrightarrow H_2$ such that the isometry $f_1\oplus f_2$ extends to $f_1\oplus_{\phi} f_2\in {\rm O}(L_1\oplus_{\phi}L_2)$. }
\end{set-up}

\begin{lemma}\label{lem:Ordercoprime}
Under Set-up \ref{set-up}, $\bar{f_i}|H_i={\rm id}_{H_i}$ ($i=1,2$).
\end{lemma}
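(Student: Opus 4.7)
The plan is to exploit the fact that the gluing map $\phi$ conjugates $\bar{f}_1|H_1$ to $\bar{f}_2|H_2$, combined with the coprimality of the orders $n_1, n_2$.

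First, I would observe that since $f_i$ has finite order $n_i$, the induced isometry $\bar{f}_i$ of $G(L_i)$ also has order dividing $n_i$. Therefore the restriction $\bar{f}_i|H_i$ (which makes sense by the stability assumption $\bar{f}_i(H_i) = H_i$) has order $m_i$ with $m_i \mid n_i$.

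Next, I would use the compatibility condition that makes $f_1 \oplus f_2$ extend to an isometry of $L_1 \oplus_\phi L_2$: by the discussion preceding the set-up, this extension exists precisely because
\[
\phi \circ (\bar{f}_1|H_1) = (\bar{f}_2|H_2) \circ \phi.
\]
Since $\phi : H_1 \to H_2$ is an isomorphism, this identity exhibits $\bar{f}_1|H_1$ and $\bar{f}_2|H_2$ as conjugate automorphisms of the abstract abelian groups $H_1$ and $H_2$. In particular, they have the same order: $m_1 = m_2 =: m$.

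Finally, $m$ divides both $n_1$ and $n_2$. By the hypothesis $\gcd(n_1, n_2) = 1$, we conclude $m = 1$, i.e. $\bar{f}_i|H_i = \mathrm{id}_{H_i}$ for $i = 1, 2$. There is no real obstacle here; the only subtlety is to remember that the extension of $f_1 \oplus f_2$ to $f_1 \oplus_\phi f_2$ imposes the intertwining relation with $\phi$, which is exactly what forces the two restricted automorphisms to have a common order.
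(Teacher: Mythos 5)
Your proposal is correct and follows exactly the paper's own argument: the intertwining relation $\phi\circ(\bar{f_1}|H_1)=(\bar{f_2}|H_2)\circ\phi$ forces the two restricted automorphisms to have the same order, which divides both $n_1$ and $n_2$, hence equals $1$ by coprimality. No gaps.
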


\begin{proof}
Since $\phi$ is an isomorphism and $\bar{f_2} \circ \phi=\phi \circ \bar{f_1}$, it follows that ${\rm Ord}(\bar{f_1}|H_1)={\rm Ord}(\bar{f_2}|H_2)$. On the other hand, ${\rm Ord}(\bar{f_i}|H_i)$ divides $n_i$. Since $n_1$ and $n_2$ are coprime, $\bar{f_i}|H_i={\rm id}_{H_i}$.\end{proof}

\begin{lemma}\label{lem:p-elementary}
Under Set-up \ref{set-up}, we suppose $\chi_{f_1}(x)=(\Phi_{p^m}(x))^k$, where $p$ is a prime and $m,k>0$. Then $H_i$, $i=1,2$ is a $p$-elementary abelian group.
\end{lemma}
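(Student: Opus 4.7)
The plan is to combine Lemma~\ref{lem:Ordercoprime} with the specific shape of $\chi_{f_1}$ to deduce that the operator $p$ annihilates $H_1$, and then transfer the conclusion to $H_2$ via the gluing isomorphism.

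First I would observe that since $f_1 \in {\rm O}(L_1)$ has finite order, it is semisimple over $\Q$, so its minimal polynomial is the squarefree part of $\chi_{f_1}(x) = \Phi_{p^m}(x)^k$, namely $\Phi_{p^m}(x)$. Hence $\Phi_{p^m}(f_1) = 0$ on $L_1$, and this relation extends by $\Q$-linearity to $L_1 \otimes \Q$, and hence to $L_1^*$. Passing to the quotient $G(L_1) = L_1^*/L_1$, we obtain $\Phi_{p^m}(\bar f_1) = 0$ on $G(L_1)$, and in particular on the $\bar f_1$-stable subgroup $H_1$.

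Next, by Lemma~\ref{lem:Ordercoprime}, $\bar{f_1}|H_1 = {\rm id}_{H_1}$, because the order of $\bar f_1|H_1$ divides both $n_1$ and $n_2$ which are coprime. Therefore
\[
0 = \Phi_{p^m}(\bar f_1|H_1) = \Phi_{p^m}(1)\cdot {\rm id}_{H_1} = p \cdot {\rm id}_{H_1},
\]
using the well-known identity $\Phi_{p^m}(1) = p$ (which follows from $\Phi_{p^m}(x) = \Phi_p(x^{p^{m-1}})$ and $\Phi_p(1) = p$). This shows $p H_1 = 0$, so $H_1$ is a $p$-elementary abelian group.

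Finally, since $\phi : H_1 \to H_2$ is an isomorphism of abelian groups by the definition of a gluing map, $H_2 \cong H_1$ is also $p$-elementary abelian. The only step requiring a bit of care is the semisimplicity of $f_1$, which is immediate from its finite order; no serious obstacle is expected.
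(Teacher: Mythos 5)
Your proof is correct and takes essentially the same route as the paper: finite order plus $\chi_{f_1}=\Phi_{p^m}^k$ forces $\Phi_{p^m}(f_1)=0$, this relation descends to the glue group, Lemma \ref{lem:Ordercoprime} gives $\bar{f_1}|H_1={\rm id}_{H_1}$, and evaluating the cyclotomic polynomial at $1$ yields $p\,H_1=0$, with $H_2\cong H_1$ via $\phi$. The only cosmetic difference is that you handle general $m$ directly via $\Phi_{p^m}(1)=p$, whereas the paper first reduces to the case $m=1$ by replacing $f_i$ with $f_i^{p^{m-1}}$.
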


\begin{proof}
First we consider the case $m=1$. Since ${\rm Ord}(f_1)<\infty$ and $\chi_{f_1}(x)=(\Phi_{p}(x))^k$, it follows that 
$$f_{1}^{p-1}+...+f_1+{\rm id}_{L_1}=0.$$ 
Then 
$$\bar{f_1}^{p-1}| H_1+...+\bar{f_1}|H_1+{\rm id}_{H_1}=0.$$ 
By Lemma \ref{lem:Ordercoprime}, $\bar{f_1}|H_1={\rm id}_{H_1}$. Thus, $p {\rm id}_{H_1}=0$, and $H_1$ ($\cong H_2$ ) is a $p$-elementary abelian group.

If $m>1$, replacing $f_i$ by $f_i^{p^{m-1}}$, one reduces to the case $m=1$. \end{proof}

\begin{lemma}\label{lem:Horderp}
Under Set-up \ref{set-up}, we suppose $n_1\in\{5,7,9\}$. Assume that $\chi_{f_1}(x)=\Phi_{n_1}(x)$. Then $|H_1|=|H_2|=1$ or $p$, where $p$ is the unique prime factor of $n_1$, i.e., $p$ is $5$, $7$, $3$ for $n_1 = 5$, $7$, $9$ respectively.
\end{lemma}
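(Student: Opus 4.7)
The plan is to exploit the fact that for $n_1 \in \{5,7,9\}$ the subring $\Z[f_1]$ of $\mathrm{End}(L_1)$ is (isomorphic to) the full ring of integers $R := \Z[\zeta_{n_1}]$ of the cyclotomic field $K := \Q(\zeta_{n_1})$, and to read off the structure of $H_1 \subseteq G(L_1)_p$ directly from the $R$-module structure that $f_1$ puts on $L_1$.

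First I would send $f_1 \mapsto \zeta_{n_1}$. Since $\chi_{f_1}(x) = \Phi_{n_1}(x)$ is irreducible of degree $\phi(n_1) = \mathrm{rk}(L_1)$, this turns $L_1$ into a torsion-free $R$-module with $L_1 \otimes \Q \cong K$, hence of $R$-rank one. For each of the three prime-power conductors $n_1 \in \{5,7,9\}$ the ring $R$ is a PID (the class numbers of $\Q(\zeta_5)$, $\Q(\zeta_7)$, $\Q(\zeta_9)$ are all one), so $L_1$, and the $R$-stable dual $L_1^{*} \subset K$, are principal fractional ideals; hence $G(L_1) = L_1^{*}/L_1$ is a cyclic $R$-module, of the form $R/\mathfrak{a}$ for some ideal $\mathfrak{a} \subset R$.

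Next, I would use that $n_1$ is a prime power, so $p$ is totally ramified in $R$: the unique prime above $p$ is $\mathfrak{p} = (1 - \zeta_{n_1})$, and the residue field is $R/\mathfrak{p} \cong \F_p$. Thus the $p$-Sylow subgroup of $G(L_1)$ has the shape $G(L_1)_p \cong R/\mathfrak{p}^{v}$ for some $v \geq 0$. Now I combine the two earlier lemmas: by Lemma \ref{lem:p-elementary}, $H_1 \subseteq G(L_1)_p$, and by Lemma \ref{lem:Ordercoprime}, $\bar{f_1}|_{H_1} = \id$, which in $R$-module language says $(\zeta_{n_1} - 1) H_1 = 0$, i.e.\ $\pi H_1 = 0$ for the uniformizer $\pi := 1 - \zeta_{n_1}$. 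Hence $H_1$ lies in the $\pi$-torsion of $R/\mathfrak{p}^{v}$, which is either $0$ (when $v = 0$) or $\mathfrak{p}^{v-1}/\mathfrak{p}^{v} \cong R/\mathfrak{p} \cong \F_p$. In both cases $|H_1| \in \{1,p\}$, and $|H_2| = |H_1|$ because the gluing map $\phi$ is an isomorphism.

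The main obstacle, as far as it is one, is just to marshal the correct cyclotomic-arithmetic inputs: that $\Z[\zeta_{n_1}]$ is the full ring of integers with class number one, and that the rational prime $p$ is totally ramified in $R$ with residue degree one. These are all classical facts for the three small prime-power conductors in play; once they are cited, the remainder is a one-line computation with $\pi$-torsion in a cyclic module over a discrete valuation ring. I would expect the write-up to be very short, essentially three displayed lines after the setup.
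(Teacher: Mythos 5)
Your proof is correct, and it is essentially the paper's own argument: both hinge on $\Z[\zeta_{n_1}]$ being the full ring of integers and a PID, so that the relevant module is cyclic over it and the $\bar{f_1}$-fixed $p$-torsion (via Lemmas \ref{lem:Ordercoprime} and \ref{lem:p-elementary}) is at most $\F_p$. The only difference is packaging: the paper exhibits an explicit generator of the fixed subgroup of $\frac{1}{p}L_1/L_1$ using a cyclic basis of the free rank-one module $L_1$, whereas you compute the $(1-\zeta_{n_1})$-torsion of the cyclic glue module $G(L_1)_p\cong \Z[\zeta_{n_1}]/\mathfrak{p}^{v}$ using that $p$ is totally ramified.
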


\begin{proof}
We prove the case $n_1=5$ in detail. The other two cases can be proved similarly.

Suppose $n_1=5$. We set $H:=\big\{h\in \frac{1}{5}L_1/L_1|\ \bar{f_1}(h)=h\big\}$. Since $\Phi_5(x)=x^4+x^3+x^2+x+1$ and $\Z[x]/(\Phi_5(x))$ is a PID, there exists $e\in L_1$ such that $\{e,f_1(e),f_1^2(e),f_1^3(e)\}$ is a basis of $L_1$. Then, by an easy computation, one obtain that 
$$H=\Big\langle \frac{\overline{e}}{5}+ \frac{\overline{2f_1(e)}}{5}+\frac{\overline{3f_1^2(e)}}{5}+\frac{\overline{4f_1^3(e)}}{5}\Big\rangle\cong \F_5.$$ 
On the other hand, by Lemmas \ref{lem:Ordercoprime}, \ref{lem:p-elementary} and by our assumption, $H_1$ is isomorphic to a subgroup of $H$. Thus, $|H_2|=|H_1|=1$ or $5$. This completes the proof for the case $n_1=5$. \end{proof}

\medskip

The following theorem will play an important role in ruling out Salem numbers in Section \ref{sect:nine}. We will use this theorem to control the transcendental lattice of the covering K3 surface of an Enriques surface with an automorphism of a given entropy.

\begin{theorem}\label{thm:L2withroots}
Under Set-up \ref{set-up}, we suppose $n_1\in\{5,7,9\}$,  
$$\chi_{f_1}(x)=\Phi_{n_1}(x)\,\, ,\,\,  L_1\oplus_{\phi}L_2\cong U\oplus E_{10}(2)\,\, .$$ 
Let $p$ be the unique prime factor of $n_1$. Then 
$$G(L_1)=\F_2^k\oplus \F_p\,\, ,\,\, G(L_2)=\F_2^{10-k}\oplus \F_p$$ 
for some $k\le r_1$. Moreover, if  $k=r_1$ and the signature of $L_1$ is $(2,r_1-2)$, then  $L_2$ has roots.
\end{theorem}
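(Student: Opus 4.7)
The plan for the first assertion is a prime-by-prime analysis. First observe $G(L) = G(U) \oplus G(E_{10}(2)) = \F_2^{10}$. Since $\chi_{f_2}$ is a product of cyclotomic polynomials $\Phi_d$ with $d \mid n_2$ coprime to $n_1$, the standard formula $\mathrm{res}(\Phi_m,\Phi_n)=1$ for distinct coprime indices $m,n>1$, together with $\mathrm{res}(\Phi_{n_1},\Phi_1)=\Phi_{n_1}(1)=p$, shows that $\mathrm{res}(\chi_{f_1},\chi_{f_2})$ is a (possibly trivial) power of $p$. Applying Lemma~\ref{lem:sylowp} at every prime $q\ne p$ yields $G(L_i)_q=0$ for $q\notin\{2,p\}$ together with an isometric splitting $G(L_1)_2\oplus G(L_2)_2\cong G(L)_2=\F_2^{10}$; write $G(L_1)_2\cong\F_2^k$ with $k\le r_1$. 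For the $p$-part, Lemma~\ref{lem:Horderp} gives $|H_1|=|H_2|\in\{1,p\}$, a direct computation yields $|\det L_0|=|\Phi_{n_1}(1)\Phi_{n_1}(-1)|=p$ for the principal $\Phi_{n_1}$-lattice, and since $\Phi_{n_1}$ is simple for $n_1\in\{5,7,9\}$, Theorem~\ref{thm:twist} writes $L_1\cong L_0(a)$ with $|\det L_1|=p\cdot|N_{K/\Q}(a)|$, so $G(L_1)_p\ne 0$. The case $|H_1|=1$ would give $L=L_1\oplus L_2$ and force $G(L_1)_p\hookrightarrow G(L)=\F_2^{10}$, a contradiction; hence $|H_1|=p$, and the identity $|G(L_1)|\cdot|G(L_2)|=|G(L)|\cdot|H_1|^2=2^{10}p^2$ combined with $|G(L_i)_p|\ge p$ (using $H_i\subset G(L_i)_p$) forces $G(L_i)_p\cong\F_p$.

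For the second assertion, assume $k=r_1$ and $\mathrm{sig}(L_1)=(2,r_1-2)$. The hypothesis $b_{L_1}(x,x)=0$ on $G(L_1)_2$ needed in Lemma~\ref{lem:L(1/2)} transfers from the analogous vanishing on $G(E_{10}(2))$ (which holds because $(v,v)_{E_{10}(2)}=2(v,v)_{E_{10}}\in 4\Z$ for $v\in E_{10}$) via the 2-part Sylow splitting already established. Lemma~\ref{lem:L(1/2)} then produces the even lattice $L_1(1/2)$ of signature $(2,r_1-2)$ and $|\det|=p$, with $G(L_1(1/2))\cong\F_p$. For $n_1\in\{7,9\}$ one has $r_2=6$ and $|\det L_2|\in\{112,48\}$, and the Hermite constant $\gamma_6=(64/3)^{1/6}$ gives $\lambda_1(L_2)^2\le\gamma_6\cdot|\det L_2|^{1/6}<4$; since $L_2$ is negative definite and even, the shortest negative square must be $-2$, producing a root.

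The main obstacle is the remaining case $n_1=5$, in which $r_2=8$ and $|\det L_2|=320$, and the Hermite bound only gives $\lambda_1^2\le 2\cdot 320^{1/8}\approx 4.10$, leaving open the root-free possibility $\lambda_1^2=-4$. To exclude this I would reglue $L_1(1/2)\oplus L_2$ along their $\F_p$-components to form an even rank $12$, signature $(2,10)$, $2$-elementary lattice $\tilde L$ of length $6$. Nikulin's uniqueness theorem for indefinite even $2$-elementary lattices then identifies $\tilde L$ with an explicit model (a plausible candidate being $U\oplus U(2)\oplus D_4^{\oplus 2}$), whose rank-$8$ root-lattice summand contains roots. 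Since $L_1(1/2)\subset\tilde L$ has rank $4$ and absorbs the entire two-dimensional positive part of $\tilde L\otimes\R$, its projection onto $D_4^{\oplus 2}$ has rank at most $2$, so the orthogonal complement in $D_4^{\oplus 2}$ has rank at least $6$; a short root-count argument then shows this complement must contain a $D_4$-root, which necessarily lies in $L_2$.
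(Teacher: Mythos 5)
Your argument for the first assertion (resultant is a power of $p$, Sylow splitting at $q\ne p$ via Lemma \ref{lem:sylowp}, $|H_1|=p$ via Lemma \ref{lem:Horderp} together with $p\mid\det L_1$ coming from simplicity of $\Phi_{n_1}$ and Theorem \ref{thm:twist}, then the determinant identity forcing $G(L_i)_p\cong\F_p$) is correct and is essentially the paper's argument, just written out in more detail; likewise your treatment of $n_1\in\{7,9\}$ via the Hermite bound in rank $6$ coincides with the paper's appeal to Mordell. The divergence, and the gap, is exactly in the case $n_1=5$, which is the one case where the paper does \emph{not} have a computer-free argument: there the paper pins down $L_1\cong L_0(2)$, determines the genus of $L_2$ (signature $(0,8)$, $G(L_2)\cong\F_2^6\oplus\F_5$ with prescribed discriminant form), enumerates that genus by Magma (exactly two classes) and checks by PARI/GP that both classes have roots.

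Your proposed replacement for this case does not work as stated. The regluing of $L_1(1/2)$ with $L_2$ along their $\F_5$-components requires an anti-isometry between $q_{L_1(1/2),5}$ and $q_{L_2,5}$ (already for the glued module to be an integral lattice one needs the bilinear forms to anti-match). But passing from $L_1$ to $L_1(1/2)$ multiplies the $5$-part of the discriminant form by $2$ (equivalently $1/2\equiv 3 \bmod 5$), a non-square modulo $5$: if $\bar x$ generates $G(L_1)_5$ then $G(L_1(1/2))_5$ is generated by $\overline{2x}$ with $q_{L_1(1/2)}(\overline{2x})=2q_{L_1}(\bar x)$. Since the original glue gives $q_{L_1,5}\cong -q_{L_2,5}$, the existence of your new gluing map would force $2q_{L_1,5}\cong q_{L_1,5}$ on $\F_5$, which is false because $2$ is not a square mod $5$. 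So the even $2$-elementary lattice $\tilde L$ you want to build does not exist by this construction, and the rest of the plan collapses. Even granting some $\tilde L$, the later steps are not proofs: Nikulin's uniqueness also needs the invariant $\delta$, the claim that $L_1(1/2)$ projects to $D_4^{\oplus 2}$ with rank at most $2$ does not follow from it carrying the positive part of the signature (a rank-$4$ sublattice can project with rank up to $4$), and the concluding ``short root-count argument'' that a rank $\ge 6$ sublattice of $D_4^{\oplus 2}$ orthogonal to it must contain a root is asserted, not given. As it stands, the case $n_1=5$ — precisely the case needing the paper's genus enumeration — is not established by your proposal.
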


\begin{proof}

Suppose $n_1=5$. Since $\Phi_5(x)$ is simple, by \cite[Theorem 5.2]{Mc16}, $L_1$ is a twist of the principal $\Phi_5(x)$-lattice $L_0$ whose glue group is of order $|\Phi_5(1)\Phi_5(-1)|=5$, say $L_0(a)$. Thus, ${\rm det}(L_1)$ is divided by 5 as ${\rm det}(L_1)={\rm det}L_0\cdot {\rm det}(a)$. Since $L_1$ and $L_2$ glue to $U\oplus E_{10}(2)$ via $\phi: H_1\longrightarrow H_2$, it follows that 
$$|{\rm det}(L_1){\rm det}(L_2)|=|{\rm det}(U\oplus E_{10}(2))|\cdot |H_1|^2=2^{10}|H_1|^2.$$
Thus, by Lemma \ref{lem:Horderp},  $|H_1|=|H_2|=5$. Then by Lemma \ref{lem:sylowp}, $G(L_1)\cong \F_2^k\oplus \F_5$ for some $k\le r_1=4$, $G(L_2)\cong \F_2^{10-k}\oplus \F_5$.  If $k=4$ and the signature of $L_1$ is $(2,2)$, by Lemma \ref{lem:L(1/2)}, $L_1(1/2)$ is a well-defined even lattice of determinant 5 and signature $(2,2)$. Then  $L_1(1/2)\cong L_0$ since ${\rm sig}(L_0)=(2,2)$ (see \cite{RS89}, \cite{CS99}). Thus, $L_1\cong L_0(2)$. There exists an even lattice $S_1$ with ${\rm sig(S_1)=(0,8)}$ and $G(S_1)\cong \F_2^6\oplus \F_5$. Moreover, the bilinear form $b_{S_1}$ on $G(S_1)$ satisfies two conditions: 

1) $b_{S_1}| G(S_1)_5$ is isomorphic to $-b_{L_0(2)}|G(L_0(2))_5$, and

2) $b_{S_1}(v,v)=0\in \Q/\Z$ for any $v\in G(S_1)_2$. 

Since $U\oplus E_{10}(2)$ is obtained by gluing $L_1$ along $H_1$ (note that $2\nmid |H_1|=5$) to the lattice $L_2$, it follows that $b_{L_2}|G(L_2)$ also satisfies both 1) and 2) (see Lemma \ref{lem:sylowp}). Then $b_{L_2}|G(L_2)$ and $b_{S_1}|G(S_1)$ are isomorphic since their restriction to Sylow subgroups are isomorphic (cf. \cite[Section 3]{Mc11a}). Thus, by \cite[Theorems 1.10.1 and 1.11.3]{Ni80}, $L_2$ is an even lattice of the same signature and the same discriminant form as $S_1$. Hence, by \cite[Corollary 1.9.4]{Ni80},  the genus of $L_2$ is the same as that of $S_1$. 

By Magma (\cite{BCP}), there are, up to isomorphism, exactly two such lattices: $S_1$ and $S_2$ (their Gram matrices can be found in the text file {\it Theorem4.6.txt} which is contained in the ZIP file {\it enriques.zip} available at \cite{Yu}). Both of them have roots (this can be verified by PARI/GP (\cite{Th})), which implies $L_2$ has roots. 

This completes the proof for the case $n_1=5$.
      
Suppose $n_1=7$ or $9$. Then, similar to the case $n_1=5$, we can prove that $G(L_1)=\F_2^k\oplus \F_p$, $G(L_2)=\F_2^{10-k}\oplus \F_p$, for some $k\le r_1=6$. If $k=6$ and the signature of $L_1$ is $(2,4)$, then $L_2$ is a negative definite even lattice of determinant $2^4p$ and rank $6$, where $p=7$ or $3$. Thus, $L_2$ has roots by \cite[Page 3]{Mo44} (and no need of computer algebra in these two cases). \end{proof}


\section{A new positivity criterion}\label{ss:pos}
\noindent
In this section,  we give a new criterion for positivity (Theorem \ref{thm:positivity}). As mentioned in Introduction, Theorem \ref{thm:positivity} and Algorithm \ref{alg:positivity} are crucial in our proof of Theorem \ref{thm:main}. 

\medskip

Let $(L, (*,**))$ be an even hyperbolic lattice of signature $(1,n)$ (note that a hyperbolic lattice in \cite{Mc16} is of signature $(n,1)$). The {\it positive cone} $\mathcal{P}$ of $L$ is defined to be one of the two connected components of  
$$\{x\in L\otimes \R | \ x^2>0\}. $$ 
Let ${\rm O}^+(L)\subset {\rm O}(L)$ be the subgroup consisting of isometries which preserve $\mathcal{P}$. The positive cone $\mathcal{P}$ is cut into {\it chambers} by the set of all {\it root hyperplanes} defined by $$r^{\perp}:=r^{\perp}_{L_{\R}}=\{x\in L\otimes \R |\ (x,r)=0\},$$ where $r\in L$ and $r^2=-2$. Each chamber is a fundamental domain of the Weyl group $W(L)\subset {\rm O}^+(L)$ which is generated by the reflections 
$$s_r: x\mapsto x+(x,r)r$$ 
corresponding to the roots $r\in L$.

\begin{definition}\label{def:positive}
 Let $f\in {\rm O}^+(L)$. We say $f$ is {\it positive} if there exists a chamber $\sM \subset \sP$ such that $f(\sM)=\sM$. 
 \end{definition}
 
 \begin{example}
 For any automorphism of a complex projective K3 surface, the induced isometry of the Picard lattice is positive since it preserves the ample cone of the surface. In this geometric setting, we define the positive cone to be the connected component containing the ample cone. 
 \end{example}
 
 Positivity of isometries of hyperbolic lattices is a subtle condition (see \cite{Mc16}, \cite{BG18}).

 \begin{definition}
 A root $r\in L$ is called an {\em obstructing root} of $f\in {\rm O}^+(L)$ if there is no $\phi\in {\rm Hom}\, (L\otimes \R,\R)$ such that ${\rm Ker}(\phi)$ is negative definite and $\phi(f^i(r))>0$ for all $i\in \Z$.  
 \end{definition}

The following result is a characterization of positivity in terms of obstructing roots.

 \begin{theorem}(\cite[Theorem 2.2]{Mc16})\label{thm:McMullen}
 An isometry $f\in {\rm O}^+(L)$ is positive if and only if $f$ has no obstructing roots.
 \end{theorem}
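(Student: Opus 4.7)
For the easier implication ($f$ positive $\Rightarrow$ no obstructing roots), suppose $\mathcal{M}\subset\mathcal{P}$ is an $f$-invariant chamber and pick $v\in\mathrm{int}(\mathcal{M})$. Then $v^2>0$, so the linear functional $\phi:=(\cdot,v)$ has negative definite kernel $v^\perp$ (since $L$ has signature $(1,n)$). Given any root $r$, exchange $r$ for $-r$ if necessary so that $(v,r)>0$; then for every $i\in\mathbb{Z}$
$$\phi(f^i(r)) = (f^i(r),v) = (r,f^{-i}(v)) > 0,$$
because $f^{-i}(v)\in\mathrm{int}(\mathcal{M})$ lies on the same side of $r^\perp$ as $v$. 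Hence $r$ is not obstructing.

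For the converse, the plan is to produce a vector $v\in\mathcal{P}$ satisfying: (i) $(v,r)\neq 0$ for every root $r$, so $v$ lies in a unique chamber $\mathcal{M}$; and (ii) for every root $r$ the scalars $\{(v,f^i(r))\}_{i\in\mathbb{Z}}$ all share a single sign. Condition (ii) is precisely the assertion that $(f(v),r)=(v,f^{-1}(r))$ and $(v,r)$ have the same sign for every root $r$, which places $f(v)$ in the same chamber as $v$ and forces $f(\mathcal{M})=\mathcal{M}$. The non-obstruction hypothesis provides, for each $f$-orbit $\mathcal{O}$ of roots together with a sign $\epsilon_\mathcal{O}\in\{\pm 1\}$, a non-empty open convex cone
$$C_\mathcal{O} := \{w\in\mathcal{P}\mid \epsilon_\mathcal{O}(w,r')>0\ \text{for every}\ r'\in\mathcal{O}\}\subset\mathcal{P},$$
and the task reduces to choosing the signs $\epsilon_\mathcal{O}$ coherently so that $\bigcap_\mathcal{O}C_\mathcal{O}\neq\emptyset$.

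The main obstacle is combining these per-orbit witnesses across the (generally infinite) set of $f$-orbits of roots. Two inputs control this: the local finiteness of the root system in $\mathcal{P}$ (for any $v\in\mathcal{P}$, $v^\perp$ is negative definite and hence contains only finitely many vectors of square $-2$), and the dynamics of $f$. I would case-split on the spectral radius $\lambda(f)$. When $\lambda(f)>1$, concentrate $v$ near the attracting isotropic eigenvector $v^+\in\partial\mathcal{P}$: for every root $r$ with $(v^+,r)\neq 0$ the asymptotic sign of $(f^i(v),r)$ is eventually forced by $(v^+,r)$, so only the finitely many roots in $(v^+)^\perp$ (a negative definite subspace) need to be handled directly, and these are treated by a small perturbation of $v^+$ built from the non-obstruction witnesses. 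When $\lambda(f)=1$, all eigenvalues of $f$ lie on the unit circle and a suitable power of $f$ acts trivially on an invariant sublattice, so an invariant chamber can be produced by averaging. Either way one obtains the desired $v$, completing the proof.
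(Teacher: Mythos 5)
First, note that the paper does not prove this statement: it is quoted verbatim from McMullen \cite[Theorem 2.2]{Mc16} and used as a black box (the closest material in the paper, the proof of Theorem \ref{thm:positivity}, takes it as input). So your proposal has to be measured against what a complete proof requires. Your forward implication is fine. The gap is in the converse, exactly at the point you yourself call ``the main obstacle'': producing one vector $v$ that works for all of the infinitely many $f$-orbits of roots simultaneously. Concretely: (i) for a root $r$ with $(r,v^+)\neq 0$, the asymptotics of $(v,f^i(r))$ only pin down the sign on \emph{one} tail $i\to\pm\infty$; the other tail is governed by $(r,v^-)$, which your sketch never mentions. The no-obstruction hypothesis enters for these roots precisely to force $(r,v^+)(r,v^-)>0$ (a root with $(r,v^+)(r,v^-)<0$ is obstructing, since for any admissible $\phi$ the two tails of $\phi(f^i(r))$ have opposite signs); as written, your argument never invokes the hypothesis for roots not orthogonal to $v^+$, so it would ``prove'' positivity of isometries having obstructing roots whose hyperplanes cross the axis through $v^\pm$. (ii) Even with coherent tail signs, each orbit has a finite middle range where the bounded component of $f^i(r)$ in the negative definite space $\{v^+,v^-\}^{\perp}$ can flip the sign of $(v,f^i(r))$; to beat this for \emph{all} orbits with a single $v$ one needs a uniform input, e.g.\ that $(r,v^+)(r,v^-)$ is bounded away from $0$ over all roots with $(r,v^+)(r,v^-)>0$ (this follows from $r^2=-2$ and discreteness of $L$: an orbit with small $(r,v^+)(r,v^-)$ has a representative of bounded norm, so only finitely many such orbits exist), or equivalently that only finitely many orbits of root hyperplanes approach the axis. ``Concentrating $v$ near $v^+$'' supplies neither; the correct candidate is of the shape $N(v^+{+}\,v^-)+u$ with $u$ an $f$-invariant vector and $N\gg 0$. (iii) A smaller inaccuracy: $(v^+)^\perp$ is not negative definite (it contains the isotropic $v^+$); the finiteness of roots orthogonal to $v^+$, and the fact that any such root is automatically orthogonal to $v^-$ as well, requires the rationality argument (the $\mathbb{Q}$-span of the $f$-orbit of such a root is a rational $f$-invariant subspace whose orthogonal complement carries the whole Salem factor), and these finitely many roots are handled via their nonzero orbit sums (nonzero by no-obstruction) and an $f$-invariant vector $u$ with $(u,r)\neq 0$, not by an unspecified ``small perturbation of $v^+$''.

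Separately, your $\lambda(f)=1$ case is not right as stated: averaging works when $f$ has finite order (and even there one must use the hypothesis to ensure each root hyperplane misses the fixed cone, plus local finiteness to pick a generic invariant point), but a quasi-unipotent $f$ of infinite order (parabolic case) has no power acting trivially on $L$, so ``averaging over a suitable power'' is not available. Since the theorem is only ever applied in the paper with spectral radius $\tau>1$, this case could be sidestepped by adding that hypothesis, but as you stated the result you would need a genuine argument there too.
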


As in \cite{Mc16}, the following direct consequence of Theorem \ref{thm:McMullen} will be frequently used in Section \ref{sect:nine}.

\begin{corollary}\label{cor:subpositive}
Let $f\in {\rm O}^+(L)$ be positive. Suppose $L^\prime$ is a hyperbolic sublattice of $L$ such that $f(L^\prime)=L^\prime$. Then $f|L^\prime\in {\rm O}^+(L^\prime)$ is positive.  

\end{corollary}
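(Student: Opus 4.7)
The plan is to prove the contrapositive via Theorem \ref{thm:McMullen} (McMullen's characterization). Suppose that $f|L'$ is not positive; I will produce an obstructing root for $f$ on $L$, contradicting positivity of $f$.

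By Theorem \ref{thm:McMullen} applied to the hyperbolic lattice $L'$ and the isometry $f|L' \in {\rm O}^+(L')$, non-positivity of $f|L'$ yields an obstructing root $r \in L'$ for $f|L'$. Since $L'$ is primitive-free of this particular concern (the root condition $r^2 = -2$ is intrinsic), $r$ is also a root of $L$. My goal is to show that $r$ is then an obstructing root for $f$ on $L$.

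Suppose, for contradiction, that $r$ is \emph{not} obstructing for $f$ on $L$. Then by definition there exists $\phi \in {\rm Hom}(L \otimes \R, \R)$ with $\ker(\phi)$ negative definite in $L \otimes \R$ and $\phi(f^i(r)) > 0$ for all $i \in \Z$. I restrict: set $\phi' := \phi|_{L' \otimes \R} \in {\rm Hom}(L' \otimes \R, \R)$. Because $f(L') = L'$, we have $(f|L')^i(r) = f^i(r) \in L' \otimes \R$ for all $i$, so $\phi'((f|L')^i(r)) = \phi(f^i(r)) > 0$. Moreover $\ker(\phi') = \ker(\phi) \cap (L' \otimes \R)$ is a subspace of a negative-definite space, hence negative definite. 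Since $\phi'(r) = \phi(r) > 0$, the functional $\phi'$ is in particular nonzero, so $\ker(\phi')$ is a proper hyperplane in $L' \otimes \R$ — a genuine witness of non-obstruction. This contradicts the fact that $r$ is obstructing for $f|L'$.

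Hence no such $\phi$ exists, i.e., $r$ is an obstructing root for $f$ on $L$. By Theorem \ref{thm:McMullen} this contradicts the assumed positivity of $f$, completing the proof. I do not foresee a substantive obstacle: the argument is essentially a one-line restriction of the linear functional, with the only subtle points being (a) that $L' \otimes \R$ remains hyperbolic (so the negative-definiteness of $\ker(\phi')$ really is a non-trivial restriction) and (b) that the restricted functional is automatically nonzero thanks to the value $\phi(r) > 0$ coming from the index $i = 0$.
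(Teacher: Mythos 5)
Your argument is correct and is exactly the deduction the paper intends: Corollary \ref{cor:subpositive} is stated there as a direct consequence of Theorem \ref{thm:McMullen} with no separate proof, and your contrapositive --- an obstructing root of $f|L^\prime$ remains obstructing in $L$ because any witnessing functional $\phi$ restricts to $L^\prime\otimes\R$ with negative definite kernel and the same positive values on the $f$-orbit --- is the intended one. The only point you leave implicit is the (immediate) fact that $f|L^\prime\in {\rm O}^+(L^\prime)$, which is needed to invoke Theorem \ref{thm:McMullen} on $L^\prime$ and follows since the two components of $\{x\in L^\prime\otimes\R \mid x^2>0\}$ lie in distinct components of the positive cone of $L\otimes\R$, each of which is preserved by $f$.
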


 \medskip
 
Let $f\in {\rm O}^+(L)$ be of spectral radius $\tau >1$. Then the characteristic polynomial of $f$ can be written as $\chi_f(x)=C(x)S(x)$, where $C(x)\in \Z[x]$ is a product of cyclotomic polynomials and $S(x)\in \Z[x]$ is a Salem polynomial (see \cite{Mc02a}, \cite{Og10b}). In particular,  $\tau$ must be a Salem number, and  both $\tau$ and $\tau^{-1}$ are eigenvalues of $f$ with multiplicity one. Let  
$$v, w \in L\otimes \R $$
 be two nonzero elements such that 
$$f(v)=\tau v\,\, ,\,\, f(w)=\tau^{-1}w\,\, .$$  Since $(v,w)\neq 0$ (this holds because
$v^2 = w^2 = 0$, since the eigenvalue is different from $\pm 1$, and otherwise the subspace generated by $v, w$ would be totally isotropic), replacing $v$ by $-v$ if necessary, we may and will assume that $(v,w)>0$. 

Let $h\in L$ such that $h^2>0$. We set 
$$\sC:=\{ r\in L | \ r^2=-2, \text{ and  }r+f(r)+...+f^i(r)=0 \text{ for some } i\ge 1\},$$ 
$$\sR_h:=\{r\in L |\  r^2=-2 \text{ and } (r,h)=0\},$$ 
$$\sS_h:=\{r\in L | \ r^2=-2 \text{ and } (r,h)(r,f(h))<0\}.$$  A root in $\sC$ is called a cyclic root. Cyclic roots are obstructing roots. 

 In general, the hyperbolic lattice $L$ may have infinitely many roots (even infinitely many ``positive'' roots) and checking positivity is a very hard problem. The crucial part in our formulation in Theorem \ref{thm:positivity} is to use $h$ to test for obstructing and to take into account $f$-orbits of roots through the three sets $\sC, \sR_h, \sS_h$, which turn out to be finite (Theorem \ref{thm:positivity}) and to fit very well with computer algebra (Algorithm \ref{alg:positivity}).

\begin{theorem}\label{thm:positivity}
Let $f\in {\rm O}^+(L)$ be of spectral radius $\tau >1$. Let $h\in L$ such that $h^2>0$. Then 

1) The three sets $\sC$, $\sR_h$, $\sS_h$ are finite sets.

2) $f$ is positive if and only if both of the following two conditions are satisfied:

i) $\sC$ is empty,

ii) $(r,v)(r,w)\ge 0$, for all  $r\in \sR_h\cup\sS_h$.

\end{theorem}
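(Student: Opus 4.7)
The plan has three movements: finiteness of the three sets, the forward direction of part~(2) via McMullen's obstructing-root criterion (Theorem~\ref{thm:McMullen}), and the backward direction by constructing, for every root $r \in L$, a linear functional $\phi$ certifying that $r$ is not obstructing; the choice of $\phi$ will depend on the location of $r$ relative to the two null eigenrays $\R v$ and $\R w$.

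Writing $\chi_f = C(x) S(x)$ with $C$ a cyclotomic factor and $S$ the Salem factor, I would first note that $\langle v, w\rangle$ has signature $(1,1)$ in the signature-$(1,n)$ space $L \otimes \R$, so $\langle v, w\rangle^{\perp}$ is negative definite; in particular $\ker C(f) \otimes \R \subset \langle v, w\rangle^{\perp}$ is negative definite. A cyclic root $r$ lies in $\ker(1 + f + \cdots + f^i) \subset \ker C(f) \otimes \R$, so $\sC$ is a subset of the roots of a negative definite lattice and is finite. Since $h^2 > 0$, $h^{\perp}$ is negative definite and $\sR_h$ is finite. After replacing $h$ by $-h$ if necessary I may assume $h, f(h) \in \sP$, so $\sS_h$ consists of the finitely many root hyperplanes strictly separating two interior points of $\sP$ (by local finiteness of the root hyperplane arrangement). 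For the forward implication: a cyclic root $r$ forces $\phi(r) + \cdots + \phi(f^i(r)) = 0$ for every $\phi$, hence is obstructing, so positivity forces $\sC = \emptyset$. Normalizing $v, w \in \bar{\sP}$ with $(v, w) > 0$, for an $f$-invariant chamber $\sM$ and $x \in \sM^{\circ}$ the limits $f^{\pm n}(x)/\tau^n$ are positive multiples of $v, w$ respectively; since $\sM$ is $f$-invariant and a closed cone, $v, w \in \bar{\sM}$. Since $r^{\perp}$ does not cross $\sM^{\circ}$ for any root $r$, both $(r, v)$ and $(r, w)$ share the constant sign of $(r, \sM^{\circ})$, yielding $(r, v)(r, w) \geq 0$ for every root, \emph{a fortiori} on $\sR_h \cup \sS_h$.

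For the backward direction, assume (i) and (ii); by Theorem~\ref{thm:McMullen} I need only exhibit a suitable $\phi$ for each root $r$. The key input is the expansion $h = \alpha v + \beta w + h''$ with $h'' \in \langle v, w\rangle^{\perp}$ and $\alpha, \beta > 0$ (forced by $h \in \sP$, $h^2 > 0$ and $(v, w) > 0$), giving
\[ (r, f^n(h)) = \alpha \tau^n (r, v) + \beta \tau^{-n}(r, w) + (r, f^n(h'')), \]
where the remainder is bounded in $n$ because $f$ has unimodular eigenvalues on the negative definite subspace $\langle v, w\rangle^{\perp}$. If $(r, v)(r, w) < 0$, the two exponentials dominate with opposite signs as $n \to \pm \infty$, so the integer sequence $\sigma_n := (r, f^n(h))$ satisfies $\sigma_{n_0 - 1}\sigma_{n_0} \leq 0$ for some $n_0$; translating via $(f^{-n}(r), h) = \sigma_n$ and $(f^{-n}(r), f(h)) = \sigma_{n+1}$, I find an iterate $f^{1-n_0}(r) \in \sR_h \cup \sS_h$ whose product $(f^{1-n_0}(r), v)(f^{1-n_0}(r), w) = (r, v)(r, w) < 0$ contradicts (ii). If $(r, v)(r, w) \geq 0$ with at least one factor nonzero, then $\phi = \pm(v + cw, \,\cdot\,)$ for any $c > 0$ works: $(v + cw)^2 = 2c(v, w) > 0$ makes $\ker \phi$ negative definite, and $\phi(f^i(r)) = \pm(\tau^{-i}(v, r) + c \tau^i (w, r))$ has constant positive sign.

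The hard part will be the last subcase $(r, v) = (r, w) = 0$, where $r$ lies in the negative definite subspace $\langle v, w\rangle^{\perp}$. Here the $f$-orbit of $r$ consists of lattice points of fixed norm $-2$ in a negative definite subspace, hence is bounded and so finite of some period $k$. Setting $r' := r + f(r) + \cdots + f^{k-1}(r)$, either $r' = 0$, forcing $r \in \sC$ and contradicting (i), or $r' \neq 0$. Then $f(r') = r'$ and $(r')^2 < 0$, so the identity $(r', r') = \sum_{i=0}^{k-1}(r', f^i(r)) = k(r', r)$ forces $(r', r) < 0$. Taking $\phi := (v + cw - \epsilon r', \,\cdot\,)$ with $c > 0$ and $\epsilon > 0$ small, one has $(v + cw - \epsilon r')^2 = 2c(v, w) + \epsilon^2(r')^2 > 0$ (using $(v + cw, r') = 0$), so $\ker \phi$ is negative definite; and $\phi(f^i(r)) = -\epsilon(r', r) > 0$ is constant in $i$ (using $(r, v) = (r, w) = 0$ and $f(r') = r'$). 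The main obstacle is precisely this subcase, where the orbit of $r$ has no natural scalar ``leading behavior''; the perturbation $v + cw - \epsilon r'$ together with the clean identity $(r', r') = k(r', r)$ is the key device that simultaneously controls both the kernel signature and the uniform positivity along the orbit.
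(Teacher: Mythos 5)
Your proof is correct, and its skeleton coincides with the paper's: finiteness of $\sC$ and $\sR_h$ from negative definiteness, the forward implication from $v,w\in\overline{\sM}$ for an invariant chamber $\sM$, and, for the converse, non-obstruction via the functional $(v+w,\cdot)$ (your $v+cw$) and its perturbation by the $f$-invariant orbit sum $r'$ (the paper's $\alpha$) together with the identity $(r',r)=(r')^2/k<0$ --- your last two cases are exactly the paper's cases a) and b). You genuinely diverge in two places. First, you get finiteness of $\sS_h$ by invoking local finiteness of the root-hyperplane arrangement along the segment joining $h$ and $f(h)$ in $\sP$, whereas the paper bounds $a=(r,h)$, $b=(r,f(h))$ through the nonnegativity of the Gram determinant of $h,f(h),r$ (Claim \ref{clm:finite}) and then places $\sS_h^{(a,b)}$ inside the negative definite $(-bh+af(h))^{\perp}$; your softer argument proves the theorem, but the quantitative bound is precisely what Algorithm \ref{alg:positivity} enumerates, so if you take your route you should at least prove local finiteness rather than cite it, and you lose the algorithmic payoff. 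Second, you organize the converse by the sign of $(r,v)(r,w)$ root by root, showing that a root with negative product would have an iterate in $\sR_h\cup\sS_h$ with the same negative product, contradicting ii); the paper instead splits roots into those whose orbit misses $\sR_h\cup\sS_h$ (handled by the constant sign of $(f^k(r),h)$) and the rest. The two mechanisms are equivalent --- both rest on the invariance $(f^k(r),v)(f^k(r),w)=(r,v)(r,w)$ and on consecutive pairings with $h$ --- but your version makes explicit that condition ii) on the finite test set propagates to all roots, which is a nice reformulation. Two small points to tidy: when $\sigma_{n_0-1}\sigma_{n_0}=0$ the relevant iterate may be $f^{-n_0}(r)\in\sR_h$ rather than $f^{1-n_0}(r)$ (harmless, as any iterate carries the same negative product), and the positivity of $\alpha,\beta$ in the expansion of $h$ uses the normalization $v,w\in\overline{\sP}$, which you fix in the forward direction but should state again when you use it in the backward one (this is legitimate since ii) is invariant under negating $v$ and $w$ simultaneously).
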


\begin{proof}

1) As pointed above, we can write $\chi_f(x)=C(x)S(x)$, where $S(x)$ is the minimal polynomial of the Salem number $\tau$. We can write $C(x)=(x-1)^kC_0(x)$ for some $k\ge 0$ such that $C_0(x)\in \Z[x]$ is not divided by $x-1$. Then $\sC$ consists exactly of roots in ${\rm Ker}(C_0(f))$. Since ${\rm Ker}(S(f))$ is hyperbolic, it follows that ${\rm Ker}(C_0(f))$ is negative definite. Thus,  $\sC$ is finite.

Since $L$ is hyperbolic and $h^2>0$, it follows that the orthogonal complement $h^{\perp}\subset L$ is negative definite. Thus, $\sR_h\subset h^{\perp}$ is finite.

Next we show finiteness of $\sS_h$. Let $$\sA:=\{(a,b)|\  a=(r,h) \text{ and } b=(r,f(h)) \text{ for some } r\in \sS_h\}.$$

\begin{claim}\label{clm:finite}
$\sA$ is a finite set.
\end{claim}

\begin{proof}
Let $r\in \sS_h$. To simplify notation, we let $x=(h,h)$, $y=(h,f(h))$, $a=(h,r)$, and $b=(f(h),r)$. We set $B=
   \begin{pmatrix} 
    x&y&a\\ 
    y&x&b\\ 
    a&b&-2\\ 
    \end{pmatrix}$. Then the determinant of $B$ is 
$$-2x^2+2y^2+2aby-xa^2-xb^2\,\, .$$
 Since $L$ is hyperbolic and the three elements $h, f(h), r$ generate a sublattice of $L$, it follows that this determinant is greater than or equal to $0$. Thus, 
    \begin{equation}\label{eq:det}
    -2x^2+2y^2+2aby\ge x(a^2+b^2).
    \end{equation}
    Note that $x>0$, $y>0$ (since $f\in {\rm O}^+(L)$), $ab<0$ (since $r\in \sS_h$). Then $2aby<0$, and the inequality (\ref{eq:det}) implies that both $a$ and $b$ are bounded (for fixed $x$ and $y$). Thus, $\sA$ is finite. This completes the proof of the claim.\end{proof}

For any $(a,b)\in \sA$, we set $$\sS_h^{(a,b)}:=\{r\in \sS_h |\ (r,h)=a, (r,f(h))=b\}.$$ Then $\sS_h^{(a,b)}\subset (-bh+af(h))^{\perp}$. Since $(-bh+af(h))^2>0$ by $ab < 0$, it follows that $(-bh+af(h))^{\perp}$ is negative definite. Thus $\sS_h^{(a,b)}$ is finite. Then $\sS_h=\cup_{(a,b)\in \sA}\sS_h^{(a,b)}$ is also finite.

2) Suppose $f$ is positive. Then $f$ cannot have cyclic roots, i.e., $\sC$ is empty. Let $\sM$ be an $f$-invariant chamber. Since $(v,w)>0$, we may assume both of $v$ and $w$ are contained in the closure of $\sM$ by the Birkhoff-Perron-Frobenius theorem \cite{Bi67}. Then $(r,v)(r,w)\ge 0$ for any root $r\in L$. Thus, the condition ii) is true.

Suppose both of the two conditions i) and ii) are satisfied. Let $$\sT_h=\{ r\in L |\ r^2=-2, \text { and } r\neq f^k(r^\prime) \text{ for any } k\in \Z, r^\prime \in \sR_h\cup\sS_h\},$$ i.e., $\sT_h$ consists of the roots which does not belong to any $f$-orbit of roots in $\sR_h\cup\sS_h$.  

Let $r\in \sT_h$. Then $f^k(r)$ is not in $\sR_h\cup\sS_h$ for any $k\in \Z$. Thus, $(f^k(r),h)(f^{k-1}(r),h)>0$ for any $k$. Then either $(h,f^k(r))>0$ for all $k$, or $(-h,f^k(r))>0$ for all $k$. Thus, $r$ is not an obstructing root. 

Let $r\in \sR_h\cup\sS_h$. There are two possibilities: a) at least one of $(r,v)$ and $(r,w)$ is nonzero, b)  both $(r,v)$ and $(r,w)$ are zero. In case a), by condition ii), interchanging $v$ and $w$ if necessary, we may assume $(r,v)>0$ and $(r,w)\ge 0$, then $(v+w,f^k(r))>0$ for all $k\in \Z$. Since $(v+w,v+w)>0$, it follows that $(v+w)^{\perp}\subset L\otimes \R$ is negative definite. Thus, $r$ is not an obstructing root. In case b), $(v+w, f^k(r))=0$ for all $k\in \Z$. Since $(v+w)^\perp$ is negative definite, it follows that the $f$-orbit of $r$ is a finite set. Then there exists $m>0$ such that $f^m(r)=r$. Let  
$$\alpha=r+f(r)+...+f^{m-1}(r)\,\, .$$ Then $f(\alpha)=\alpha$, $\alpha\neq 0$ (since, by condition i), $r$ is not a cyclic root), and $(\alpha,\alpha)<0$. Since $(f^k(r), \alpha)=(r,\alpha)$ for any $k$, it follows that $(r,\alpha)=\frac{(\alpha,\alpha)}{m}<0$. By $(v+w,\alpha)=0$,  it follows that 
$$(N(v+w)-\alpha,N(v+w)-\alpha)>0$$ for sufficiently large $N>0$. Then $r$ is not an obstructing root since $(N(v+w)-\alpha, f^k(r))>0$ for all $k\in \Z$.  

Note that a root is an obstructing root if and only if some member of its $f$-orbit is an obstructing root. Therefore, we have proved that  if the two conditions i) and ii) are satisfied, then $f$ has no obstructing roots and, by Theorem \ref{thm:McMullen},  $f$ is positive. This completes the proof of the theorem. \end{proof}

For a root $r\in L$ violating ii) in Theorem \ref{thm:positivity}  (i.e., $(r,v)(r,w)<0$), the root hyperplane $r^\perp$ crosses the line segment connecting $v$ and $w$, and $r$ is an obstructing root (see the top and middle of \cite[Fig. 1]{Mc16} for concrete examples of such obstructing roots). 

The following algorithm to check positivity is based on Theorem \ref{thm:positivity}.

\begin{algorithm}\label{alg:positivity}
Input: a pair $(L,f)$ of  an even hyperbolic lattice $L$ and an isometry $f\in {\rm O}^{+}(L)$ of spectral radius $\tau>1$. To check positivity of $f$, proceed as follows.

1. Determine the factorization $\chi_f(x)=(x-1)^kC_0(x)S(x)$, where $k\ge 0$, $(x-1) \nmid C_0(x)$. If $C_0(x)\neq 1$, go to step 2; otherwise, go to step 3.

2. Compute $\sC$ consisting of roots in ${\rm Ker}(C_0(f))\subset L$. If $\sC\neq \emptyset$, then output one $r_0\in\sC$ (thus $f$ is not positive) and stop; otherwise, go to step 3.

3. Compute eigenvectors $v,w$ of $f$ corresponding to $\tau,\tau^{-1}$ respectively. If $(v, w)<0$ then replace $v$ by $-v$.

4. Find one $h\in L$ with $h^2>0$ by taking the integer part of $n(v+w)+z$ for some large enough $n$ and randomly chosen small $z\in L$. (This step may be skipped if such $h$ is already given.)

5. Compute the finite set $\sR_h$ consisting of roots in $ h^\perp_L$.

6. Check if $(r,v)(r,w)\ge0$ for all $r\in \sR_h$ one by one. If $(r_1,v)(r_1,w)<0$ for some $r_1\in \sR_h$, then output $r_1$ (thus $f$ is not positive) and stop. 

7. Find the finite set $$\sA^\prime:=\{(a,b)\in \Z\times\Z |\, -2x^2+2y^2+2aby\ge x(a^2+b^2), a>0, b<0\}$$ where $x=(h,h)>0$, $y=(h,f(h))>0$.

8. Form the finite set $\sH:=\{-bh+af(h) | (a,b)\in \sA^{\prime}\}\subset L$.

9. Run steps 5 and 6  for all $h^\prime\in\sH$ one by one. If some root $r_2\in\sR_{h_1^\prime}$ satisfies $(r_2,v)(r_2,w)<0$ for some $h_1^\prime\in\sH$, output $r_2$ (thus $f$ is not positive) and stop. If otherwise, then output that $f$ is positive and stop. \end{algorithm}

\begin{remark}
1) Note that \begin{equation}\label{eq:Sh}
\sS_h=(\bigcup_{h^\prime\in \sH}\sR_{h^\prime})\setminus \sR_h.
\end{equation} Thus, if no root $r^\prime$ satisfying $(r^\prime,v)(r^\prime,w)<0$ is found in step 9, then both i) and ii) in Theorem \ref{thm:positivity} are satisfied and hence $f$ is positive.

2) If an $h\in L$ with $h^2>0$ is given,  by (\ref{eq:Sh}),  one can find  $\sR_h$ and $\sS_h$ using only three of the steps above: 5 (for $h$), 7, 8, 5 (for all $h^\prime \in \sH$). Clearly, if both $\sR_h$ and $\sS_h$ are empty, then the chamber containing $h$ is $f$-stable and $f$ is positive.

3) For practical purposes, Theorem \ref{thm:positivity} is easy to apply. In fact, all of the nine steps in Algorithm \ref{alg:positivity} often can be easily handled by computer algebra system (we use PARI/GP (\cite{Th}) to find roots in even negative definite lattices). The crucial point is the following: the elements of $\sA^\prime$ and $\sA$ (see Claim \ref{clm:finite}) can be easily found out by computer algebra (note that, for any $(a,b)\in \sA$, either $(a,b)\in \sA^\prime$ or $(-a,-b)\in \sA^\prime$).

\end{remark}


\section{Enriques surfaces and K3 surfaces}
\noindent
In this section, based on close relation between Enriques surfaces and K3 surfaces, we establish two constraints for automorphisms of Enriques surfaces (Lemmas \ref{lem:noroots} and \ref{lem:gL-}). 
\medskip

Let $Y$ be an Enriques surface and  let $X$ be the universal cover of $Y$. Then there exists a fixed point free involution  $\sigma: X\longrightarrow X$ such that $X/\sigma=Y$. Let $\pi: X\longrightarrow Y$ denote the natural quotient map. To simplify notation,  we use $L$ to denote $H^2(X,\Z)$. The isometry $\sigma^*\in {\rm O}(L)$ induced by $\sigma$ is of order 2, and we set $$L^+:=\{\alpha\in L | \ \sigma^*(\alpha)=\alpha \}\,\, ,\,\,L^-:=\{\alpha\in L | \ \sigma^*(\alpha)=-\alpha \}\,\, .$$ Then the lattice $L$ is a primitive extension of $L^+$ and $L^-$, and 
\begin{equation}\label{eq:L+}
L^+\cong E_{10}(2)\,\, ,\,\, L^-\cong U\oplus E_{10}(2)\,\, ,
\end{equation}
cf. \cite{BP83}.
Let $H^2(Y,\Z)_f$ denote the free part of $H^2(Y,\Z)\cong \Z^{10}\oplus \Z/2\Z$. Then,  $H^2(Y,\Z)_f\cong U\oplus E_{8}$, and $\pi^*(H^2(Y,\Z)_f)=L^+$.

\begin{lemma}\label{lem:noroots}
Let $x\in {\rm NS}(X)$. If $x\in (L^+)^{\perp}$, then $(x,x)\neq -2$.
\end{lemma}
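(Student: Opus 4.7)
The plan is to argue by contradiction using Riemann--Roch on the K3 surface $X$, together with the fact that every $x \in (L^+)^{\perp}$ satisfies $\sigma^{*}(x) = -x$.

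Suppose, toward contradiction, that there is $x \in \mathrm{NS}(X) \cap (L^+)^{\perp}$ with $(x,x) = -2$. First I would observe that $(L^+)^{\perp} \subset L$ is precisely the $(-1)$-eigenlattice $L^{-}$ of $\sigma^{*}$, so $\sigma^{*}(x) = -x$. Since $X$ is a K3 surface, $K_X = 0$ and Riemann--Roch gives
\[
\chi(\mathcal{O}_X(x)) \;=\; 2 + \tfrac{1}{2}(x,x) \;=\; 1,
\]
so $h^{0}(x) + h^{0}(-x) \geq h^{0}(x) + h^{2}(x) \geq 1$ (using Serre duality $h^{2}(x) = h^{0}(-x)$). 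After replacing $x$ by $-x$ if necessary, I may assume $x$ is represented by an effective divisor $D$.

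Applying the automorphism $\sigma$, the class $\sigma^{*}(x) = -x$ is also represented by an effective divisor $D' := \sigma^{*}D$. Then $D + D'$ is an effective divisor whose class in $\mathrm{NS}(X)$ equals $x + (-x) = 0$. On a K3 surface, numerical and linear equivalence coincide, so $D + D' \sim 0$; being effective, this forces $D + D' = 0$ as divisors, and hence $D = D' = 0$. But then $x = 0$ in $\mathrm{NS}(X)$, contradicting $(x,x) = -2$.

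The argument is essentially a standard Riemann--Roch trick on K3 surfaces, so there is no real obstacle; the only point that needs care is the justification that $\sigma^{*}$ acts as $-1$ on $(L^+)^{\perp}$ (immediate from the orthogonal decomposition of $L \otimes \mathbb{Q}$ into $\sigma^{*}$-eigenspaces) and that effectiveness is preserved by $\sigma^{*}$ (since $\sigma$ is an automorphism, not merely a birational map). No use is made of fixed-point-freeness of $\sigma$ in this specific statement.
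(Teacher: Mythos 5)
Your proof is correct, but it closes the argument differently from the paper. Both proofs start the same way: Riemann--Roch on the K3 surface shows that a $(-2)$-class $x\in {\rm NS}(X)$ has $x$ or $-x$ effective. The paper then finishes by pairing with an ample class: since $Y$ is projective, $H^2(Y,\Z)_f$ contains an ample class $h$, and $\pi^*h\in L^+$ is ample because $\pi$ is finite; an effective class must pair strictly positively with $\pi^*h$, so $(x,\pi^*h)\neq 0$, contradicting $x\in (L^+)^{\perp}$. You instead use the identification $(L^+)^{\perp}=L^-$ (correctly justified via the orthogonal eigenspace decomposition of $L\otimes\Q$ and primitivity of the eigenlattices), so that $\sigma^*x=-x$; then $D+\sigma^*D$ is an effective divisor with trivial class, hence zero since linear and numerical equivalence agree on a K3, forcing $x=0$ and contradicting $x^2=-2$. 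Your route avoids invoking projectivity of $Y$ and the ampleness of $\pi^*h$, at the cost of using the eigenlattice structure and the fact that a nonzero effective divisor cannot be linearly trivial; the paper's version is marginally shorter and makes the role of the invariant ample class explicit, which is in the same spirit as how positivity is used elsewhere in the paper. One cosmetic point: your inequality $h^0(x)+h^0(-x)\ge h^0(x)+h^2(x)$ is in fact an equality by Serre duality ($h^2(x)=h^0(-x)$, as $K_X=0$), with the bound $\ge 1$ coming from $\chi=1$ and $h^1\ge 0$; this does not affect the argument.
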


\begin{proof}
Since $Y$ is projective,  $H^2(Y, \Z)_f$ contains an ample class, say $h$. Then $\pi^{\ast}(h)\in L^+$ is also ample since $\pi$ is a finite map.

If $x\in {\rm NS}(X)$ and $x^2=-2$, then by Riemann-Roch Theorem, either $x$ or $-x$ is effective. Then either $(x,\pi^{\ast}h)>0$ or $(-x,\pi^{\ast}h)>0$. Thus, $(x,\pi^{\ast}h)\neq 0$, a contradiction to $x\in (L^+)^\perp$. \end{proof}

\medskip

Any automorphism $g\in {\rm Aut}(Y)$ lifts (in two ways) to an automorphism $\hat{g}\in {\rm Aut}(X)$ commuting with $\sigma$. Thus, if we set $${\rm Aut}(X,\sigma):=\{f\in {\rm Aut}(X)|\ f \circ \sigma =\sigma \circ f\},$$ then ${\rm Aut}(Y)={\rm Aut}(X,\sigma)/\{{\rm id}, \sigma\}$.  Since $\hat{g}^*\sigma^*=\sigma^*\hat{g}^*$, both $L^+$ and $L^-$ are $\hat{g}^{\ast}$-stable. We want to understand the relation between  the characteristic polynomials of $\hat{g}^{\ast}| L^+$ and $\hat{g}^{\ast}| L^-$.  

\begin{lemma}(\cite[Lemma 2.2]{MOR17})\label{lem:chiL-}
Let $f\in {\rm O}(U\oplus E_{10}(2))$. Then $$\chi_f(x)\equiv (1+x)^2\chi_{\overline{f}}(x) \;{\rm mod}\; 2.$$
\end{lemma}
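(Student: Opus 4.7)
The plan is to compute $\chi_f(x) \pmod{2}$ by introducing a short $f$-stable filtration of the $\F_2$-vector space $V := L/2L$ whose graded pieces are the glue group $G(L)$ and a $2$-dimensional quotient $L/2L^*$ on which any admissible action of $f$ is essentially unique.

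First I would record the ambient data. For $L = U \oplus E_{10}(2)$ one has $L^* = U \oplus \tfrac{1}{2}E_{10}$, so $2L^* \subset L$ with $G(L) = L^*/L \cong \F_2^{10}$ and $L/2L^*$ of $\F_2$-dimension $2$. On $V = L/2L \cong \F_2^{12}$ the induced linear map $f$ has characteristic polynomial $\chi_f(x) \pmod{2}$; the reduction of the bilinear form has radical exactly $R := 2L^*/2L$, because $x \in L$ pairs into $2\Z$ with all of $L$ iff $x/2 \in L^*$. The map $\overline{2w} \mapsto \bar w$ gives an $\F_2[f]$-equivariant isomorphism $R \xrightarrow{\sim} G(L)$ on which $f$ acts as $\bar f$, so the lemma reduces to showing that $f$ acts on $V/R = L/2L^*$ with characteristic polynomial $(1+x)^2$.

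Next I would exploit the quadratic refinement $\tilde q(x) = \tfrac{1}{2}(x,x) \pmod{2}$ of the reduced bilinear form on $V$, which is well defined because $L$ is even. The key step is to show that $\tilde q$ descends to $V/R$: for $z = 2w$ with $w \in L^*$, the descent amounts to $w^2 \in \Z$, which I would check directly using $L = U \oplus E_{10}(2)$, where $w = (u, \tfrac{1}{2}y)$ with $y \in E_{10}$ gives $w^2 = u^2 + \tfrac{1}{2}(y,y)_{E_{10}} \in \Z$ by evenness of $U$ and $E_{10}$. Via projection to $U$ one identifies $L/2L^* \cong U/2U$ so that $\tilde q$ corresponds to $q_U(u) := \tfrac{1}{2}u^2 \pmod{2}$; on a standard basis $\{e,f_0\}$ with $e^2 = f_0^2 = 0$ and $(e,f_0) = 1$, this form takes the value $0$ on $0,e,f_0$ and the value $1$ on $e+f_0$, i.e., it is the split $\F_2$-quadratic form on $\F_2^2$ of Arf invariant zero.

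Finally, any $\F_2$-isometry of $(V/R, \tilde q)$ must fix the unique vector $e+f_0$ with $q_U = 1$ and permute $\{e,f_0\}$, so the orthogonal group has order two: the identity and the transposition $e \leftrightarrow f_0$. The identity has characteristic polynomial $(1+x)^2$, and the transposition, represented by $\bigl(\begin{smallmatrix} 0 & 1 \\ 1 & 0 \end{smallmatrix}\bigr)$, has characteristic polynomial $x^2+1 = (1+x)^2$ in $\F_2[x]$. In either case $f$ has characteristic polynomial $(1+x)^2$ on $V/R$, completing the proof. The technical point I expect to be most delicate is the descent of $\tilde q$ from $L/2L$ to $L/2L^*$: it relies on the specific feature $w^2 \in \Z$ for every $w \in L^*$, which is a property of the particular lattice $U \oplus E_{10}(2)$ rather than a formal consequence of $L$ being even with $2$-elementary glue group.
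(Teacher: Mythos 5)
Your proof is correct. Note that the paper itself does not prove this lemma at all: it is quoted verbatim from Matsumoto--Ohashi--Rams \cite[Lemma 2.2]{MOR17}, so there is no internal argument to compare against. Your argument is a clean self-contained substitute: the filtration $2L\subset 2L^{*}\subset L$ is $f$-stable, the middle quotient $2L^{*}/2L$ is equivariantly identified with $G(L)$ (contributing $\chi_{\overline f}$ mod $2$), and the two-dimensional quotient $L/2L^{*}\cong U/2U$ carries the even quadratic refinement $\tfrac12(x,x)\bmod 2$, whose descent you justify correctly via $w^{2}\in\Z$ for $w\in L^{*}$ (using $L^{*}=U\oplus\tfrac12E_{10}$ and evenness of $E_{10}$). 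The resulting form is the Arf-invariant-zero form $ab$ on $\F_2^{2}$, whose orthogonal group has order two with both elements of characteristic polynomial $(1+x)^{2}$, and this is exactly the point that rules out the dangerous order-three possibility $x^{2}+x+1$ that would survive if one only kept track of the mod-$2$ bilinear form. The multiplicativity of characteristic polynomials along the stable filtration then gives the congruence. This is in the same lattice-theoretic spirit as the cited source, and you correctly flag the one genuinely non-formal ingredient, namely that $w^{2}\in\Z$ for all $w\in L^{*}$ is a special feature of $U\oplus E_{10}(2)$ rather than a consequence of $L$ being even with $2$-elementary glue group.
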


The relationship between  $\chi_{{\hat{g}}^\ast |{L^-}}(x)$ and $\chi_{{\hat{g}}^\ast |{L^+}}(x)$ in the following lemma is important for us since it reduces the number of isometries of $L^{\pm}$ which we need to consider to determine whether a given Salem number can be realized by automorphisms of Enriques surfaces.

\begin{lemma}\label{lem:gL-}
The isometry $\hat{g}^\ast |{L^-} \in {\rm O}(L^-)$ is of finite order, and $$\chi_{{\hat{g}}^\ast |{L^-}}(x)\equiv (1+x)^2\chi_{{\hat{g}}^\ast |{L^+}}(x)\equiv (1+x)^2\chi_{g^\ast}(x) \;{\rm mod}\; 2.$$
\end{lemma}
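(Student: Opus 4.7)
The plan is to reduce the statement to three ingredients: (i) $\hat g^*|L^-$ has finite order, (ii) Lemma~\ref{lem:chiL-} applied to $f:=\hat g^*|L^-$, and (iii) compatibility of the induced actions on the glue groups $G(L^{\pm})$ under the gluing of $L$.

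For (i), the nowhere-vanishing holomorphic $2$-form $\omega_X$ on $X$ satisfies $\sigma^*\omega_X=-\omega_X$ (otherwise it would descend to $Y$, contradicting $h^{2,0}(Y)=0$), so $\omega_X\in L^-\otimes\C$. Because $H^{2,0}(Y)=0$, every class in $H^2(Y,\Z)_f$ pulls back to a Hodge class on $X$, hence $L^+\subset {\rm NS}(X)$ and $T_X\subset L^-$. Setting $M:=(T_X)^{\perp}_{L^-}$, both $T_X$ and $M$ are $\hat g^*$-stable. The sublattice $M$ is negative definite (the positive $2$-plane in $L^-\otimes\R$ is spanned by ${\rm Re}\,\omega_X$ and ${\rm Im}\,\omega_X$, both lying in $T_X\otimes\R$), so $\hat g^*|M$ has finite order. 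On $T_X$, the eigenvalue of $\hat g^*$ on $\C\omega_X$ is an algebraic integer of modulus $1$, and the remaining eigenvalues also lie on the unit circle (the $(1,1)$-part of $T_X\otimes\C$ is negative definite and $\hat g^*$ is an isometry), so by Kronecker's theorem $\chi_{\hat g^*|T_X}(x)$ is a product of cyclotomic polynomials and $\hat g^*|T_X$ is of finite order. Therefore $\hat g^*|L^-$ is of finite order, being of finite order on the finite-index sublattice $T_X\oplus M$.

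For (ii) and (iii), applying Lemma~\ref{lem:chiL-} to $f:=\hat g^*|L^-\in {\rm O}(U\oplus E_{10}(2))$ yields
$$\chi_{\hat g^*|L^-}(x)\equiv (1+x)^2\,\chi_{\bar f}(x)\pmod 2,$$
where $\bar f$ is the induced action on $G(L^-)\cong\F_2^{10}$. Since $L$ is unimodular and $|\det L^+|=|\det L^-|=2^{10}$, the gluing $L=L^+\oplus_\phi L^-$ uses the full glue groups, so $\phi$ is an isomorphism $G(L^+)\to G(L^-)$; as $\hat g^*$ commutes with $\sigma^*$ it stabilizes both $L^{\pm}$, and $\phi$ intertwines the two induced actions, giving $\chi_{\bar f}(x)=\chi_{\overline{\hat g^*|L^+}}(x)$. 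Under the natural identification $G(L^+)=\tfrac{1}{2}E_{10}/E_{10}\cong E_{10}/2E_{10}$ coming from $L^+=E_{10}(2)$, the action $\overline{\hat g^*|L^+}$ is just the mod-$2$ reduction of $\hat g^*|L^+$ on the underlying $\Z$-module $E_{10}$, so $\chi_{\overline{\hat g^*|L^+}}(x)\equiv\chi_{\hat g^*|L^+}(x)\pmod 2$. Finally, $\pi^*: H^2(Y,\Z)_f\to L^+$ is a $\Z$-module isomorphism intertwining $g^*$ with $\hat g^*|L^+$, whence $\chi_{\hat g^*|L^+}(x)=\chi_{g^*}(x)$. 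Chaining all the identifications yields the asserted congruences.

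The main subtlety I anticipate is the compatibility of the gluing map $\phi$ with the two induced actions on $G(L^{\pm})$ and the correctness of the identification $G(L^+)\cong E_{10}/2E_{10}$; the former is forced by $[\hat g^*,\sigma^*]=0$ together with the unimodularity of $L$, and the latter is a direct computation from $(E_{10}(2))^*=\tfrac{1}{2}E_{10}$. The finite-order statement requires no new ingredient beyond the standard Kronecker/Hodge-theoretic argument sketched above.
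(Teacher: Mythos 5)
Your derivation of the two congruences follows the paper's own route essentially step for step: apply Lemma \ref{lem:chiL-} to $\hat g^*|L^-$; use that $L^+$ and $L^-$ are glued along their full glue groups inside the unimodular lattice $L$, so the gluing isomorphism intertwines the induced actions and $\chi_{\overline{\hat g^*|L^-}}=\chi_{\overline{\hat g^*|L^+}}$; identify $G(L^+)\cong E_{10}/2E_{10}$ so that $\chi_{\overline{\hat g^*|L^+}}$ is the mod-$2$ reduction of $\chi_{\hat g^*|L^+}$; and use $\pi^*:H^2(Y,\Z)_f\to L^+$ to get $\chi_{\hat g^*|L^+}=\chi_{g^*}$. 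The paper compresses these identifications into two lines, but the content is the same, and your extra detail (full glue groups from unimodularity, $(E_{10}(2))^*=\tfrac12 E_{10}$) is correct. The genuine difference is the finite-order claim: the paper simply quotes \cite[Proposition (3.2)]{BP83}, whereas you give a self-contained Hodge-theoretic argument via $\omega_X\in L^-\otimes\C$, $T_X\subset L^-$, negative definiteness of $(T_X)^\perp_{L^-}$, and Kronecker's theorem. That buys independence from the Barth--Peters reference at the cost of redoing a known lemma.

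One step of your finite-order argument is stated too quickly: from ``$\chi_{\hat g^*|T_X}$ is a product of cyclotomic polynomials'' finite order does \emph{not} follow in general --- a unipotent integral matrix has cyclotomic characteristic polynomial and infinite order --- so you need semisimplicity of $\hat g^*|T_X$. This is easily supplied in the present situation, in either of two standard ways: (a) the minimal polynomial of $\hat g^*|T_X$ is irreducible, because $T_X$ is the minimal primitive sublattice whose complexification contains $\omega_X$ (the same fact the paper invokes in the proof of Theorem \ref{thm:EnriquesK3}); being irreducible with roots of unity as roots it equals a single $\Phi_k$, which divides $x^k-1$, whence $(\hat g^*|T_X)^k=\mathrm{id}$; or (b) $\hat g^*|T_X\otimes\R$ preserves the orthogonal decomposition into the positive-definite plane spanned by $\mathrm{Re}\,\omega_X$, $\mathrm{Im}\,\omega_X$ and its negative-definite complement, hence lies in a compact subgroup of ${\rm O}(T_X\otimes\R)$, while it also lies in the discrete group ${\rm O}(T_X)$, so it generates a finite group. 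With either patch inserted, your argument is complete and correct.
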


\begin{proof}
By \cite[Proposition (3.2)]{BP83}, $\hat{g}^\ast|_{L^-}$ is of finite order. Since $L^-$ and $L^+$ are orthognal to each other in the unimodular lattice $L$, it follows that $$\chi_{\overline{\hat{g}^\ast|{L^-}}}(x)=\chi_{\overline{\hat{g}^\ast|{L^+}}}(x).$$ By Lemma \ref{lem:chiL-}, $$\chi_{\hat{g}^\ast|{L^-}}(x)\equiv (1+x)^2\chi_{\overline{\hat{g}^\ast|{L^-}}}(x)\;{\rm mod}\; 2$$  Note that $$\chi_{\overline{\hat{g}^\ast|{L^+}}}(x)\equiv \chi_{\hat{g}^\ast|{L^+}}(x)\equiv \chi_{g^\ast}(x)\;{\rm mod}\; 2.$$ Thus,  $$\chi_{{\hat{g}}^\ast |{L^-}}(x)\equiv (1+x)^2\chi_{{\hat{g}}^\ast |{L^+}}(x)\equiv (1+x)^2\chi_{g^\ast}(x)\;{\rm mod}\; 2.$$ This completes the proof of the Lemma. \end{proof}

\medskip


\section{Enriques quadruple and realization conditions}\label{sect:seven}

In this section, we introduce the notion of Enriques quadruple (Definition \ref{def:triple}), and reduce realization problem to purely lattice theoretical problem in term of this notion (Theorem \ref{thm:EnriquesK3}). This reduction is crucial in our proof of the main theorem.

\begin{definition}\label{def:triple} 

Let $L^+$ and $L^-$ be two lattices isometric to $E_{10}(2)$ and $U\oplus E_{10}(2)$ respectively. Let $f^+\in {\rm O}^+(L^+)$, $f^-\in {\rm O}(L^-)$, let $T\subset L^-$ be a primitive sublattice, and let $\phi: G(L^-)\longrightarrow G(L^+)$ be a gluing map.
 
We say the 4-tuple $(f^+,f^-,T, \phi)$ is an {\it Enriques quadruple} if all of  the following eight conditions are satisfied:

\medskip
1) the spectral radius of $f^+$ is a Salem number $\tau$,

2) $\chi_{f^-}(x)\equiv (1+x)^2\chi_{f^+}(x)\; \text{mod } 2$, 

3) $f^-$ is of finite order, 

4) the signature of $T$ is $(2,r)$, where $r\ge 0$,

5) $f^-(T)=T$ and the minimal polynomial of $f^-| T$ is irreducible,
 
6)   $T^{\perp}_{L^-}$ has no roots,

7)  $L^-\oplus_{\phi} L^+\cong {\rm II}_{3,19}$ and $f^-\oplus f^+$ extends to $f^-\oplus_{\phi} f^+\in {\rm O}(L^-\oplus_{\phi} L^+)$,

8) there exists $h\in L^+$ such that: 

i) $(h,h)>0$, 

ii) $h^{\perp}_{T^\perp_{L^-\oplus_{\phi} L^+}}$ has no roots, and

iii) $h$ and $(f^-\oplus_{\phi} f^+)(h)$ are in the same chamber of $T^\perp_{L^-\oplus_{\phi} L^+}$.

\medskip

The {\it entropy} of an Enriques quadruple is defined to be the entropy of $f^+$, i.e., $\log \tau$.
\end{definition}

\begin{remark}
Condition 2) follows from condition 7) (cf. Lemma \ref{lem:gL-}), and clearly condition 6) follows from condition 8). However, we include conditions 2) and 6) in Definition \ref{def:triple}, as we will frequently use them in Section 9.

\end{remark}

\medskip

The next lemma is known for the experts, and is crucial in our proof of Theorem \ref{thm:EnriquesK3}. We give a proof in the Appendix B.

\begin{lemma}\label{lem:P}
Let $T$ be a lattice of signature $(2,r)$, where $0\le r\le 10$. Let $f\in {\rm O}(T)$ be an isometry of finite order such that the minimal polynomial  of $f$ is irreducible. Then $T_{\R}$ contains an $f$-invariant plane $P$ such that $P$ has signature $(2,0)$, $f_{\R}|P\in {\rm SO}(P)$, and $P^{\perp}_{T_{\R}}\cap T=0$.
\end{lemma}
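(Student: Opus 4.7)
The plan is to exploit the $f$-isotypical decomposition of $T_{\R}$ and use a Baire-type genericity argument. Since $f$ has finite order with irreducible minimal polynomial, that polynomial must be a cyclotomic polynomial $\Phi_n(x)$, so $\chi_f(x) = \Phi_n(x)^k$ with $k\phi(n) = \mathrm{rk}(T) = r + 2$.

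I will focus on the main case $n \geq 3$. Diagonalize $f$ over $\C$ to write $T_{\C} = \bigoplus_{\zeta} V_\zeta$ with $\zeta$ running over primitive $n$-th roots of unity, each $V_\zeta$ of complex dimension $k$. Since $(fu, fv) = (u, v)$, we have $V_\zeta \perp V_\eta$ unless $\eta = \bar{\zeta}$, so $T_{\R} = \bigoplus_{[\zeta]} W_{[\zeta]}$ with $W_{[\zeta]}$ the real form of $V_\zeta \oplus V_{\bar{\zeta}}$. The pairing restricts to a nondegenerate Hermitian form $h(u,v) = (u, \bar{v})$ on $V_\zeta$ of some signature $(a_\zeta, b_\zeta)$, and the computation $(v + \bar{v}, v + \bar{v}) = 2h(v, v)$ for $v \in V_\zeta$ shows that $W_{[\zeta]}$ has real signature $(2a_\zeta, 2b_\zeta)$. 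Matching against the total signature $(2, r)$ gives $\sum a_\zeta = 1$, so there is a unique $[\zeta_0]$ with $a_{\zeta_0} = 1$. Pick a Hermitian-positive complex line $\ell \subset V_{\zeta_0}$ (forced to be $V_{\zeta_0}$ itself when $k = 1$) and let $P = \{v + \bar{v} : v \in \ell\} \subset W_{[\zeta_0]}$. Then $P$ has signature $(2, 0)$, is $f$-invariant, and $f|_P$ acts as rotation by $\arg(\zeta_0) \not\equiv 0 \pmod{\pi}$, so $f|_P \in \mathrm{SO}(P)$.

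For the last condition $P^{\perp}_{T_{\R}} \cap T = 0$, the key input is that the projection $\pi_{[\zeta_0]} : T \otimes \Q \to W_{[\zeta_0]}$ is injective: indeed, $T \otimes \Q$ is a $\Q[x]/(\Phi_n(x)) \cong \Q(\zeta_n)$-module via $f \leftrightarrow \zeta_n$, and $W_{[\zeta_0]}$ identifies with a simple factor of $T \otimes_{\Q(\zeta_n)} (\Q(\zeta_n) \otimes_\Q \R)$ via the embedding $\Q(\zeta_n) \hookrightarrow \C$ sending $\zeta_n \mapsto \zeta_0$. Thus for any $t \in T \setminus \{0\}$, $\pi_{[\zeta_0]}(t) = u_t + \bar{u}_t$ with $0 \neq u_t \in V_{\zeta_0}$, and a direct pairing computation shows $t \in P^{\perp}_{T_{\R}}$ iff $h(u_t, e) = 0$ for any nonzero $e \in \ell$, which cuts out a proper Hermitian hyperplane in $\mathbb{P}(V_{\zeta_0})$. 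When $k \geq 2$, Baire category on the open positive-definite locus of $\mathbb{P}(V_{\zeta_0})$ lets us choose $\ell$ avoiding all countably many such hyperplanes. When $k = 1$, $\ell = V_{\zeta_0}$ is forced and $P = W_{[\zeta_0]}$, so $P^{\perp}_{T_{\R}} = \bigoplus_{[\zeta] \neq [\zeta_0]} W_{[\zeta]}$ meets $T$ only in $0$ by the same injectivity. The degenerate cases $n \in \{1, 2\}$ with $f = \pm \mathrm{id}$ are handled by the same Baire argument applied to the open positive-definite locus of $\mathrm{Gr}(2, r + 2)$, since every 2-plane is $f$-invariant and $f|_P = \pm \mathrm{id} \in \mathrm{SO}(P)$.

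The main technical obstacle is the genericity step: verifying that each lattice point $t \in T \setminus \{0\}$ imposes a genuinely proper closed condition on the parameter space of valid $f$-invariant positive-definite 2-planes. This rests entirely on the injectivity of $\pi_{[\zeta_0]}$, which is supplied by the $\Q(\zeta_n)$-module structure on $T \otimes \Q$; once this injectivity is in hand, Baire category (or, in the $k = 1$ case, direct rationality) closes the argument.
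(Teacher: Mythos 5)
Your argument is correct, and it takes a noticeably different route from the paper's. The shared core is the orthogonal eigenspace decomposition of $T_{\C}$ and the signature count showing that exactly one conjugate pair of eigenvalues carries the positive part -- this is exactly how the paper treats the case where the characteristic polynomial equals the minimal polynomial. But from there the two proofs diverge. The paper argues by cases: for $f=\pm\mathrm{id}$ and for $\chi_f=\Phi_k^n$ with $n>1$ it builds explicit $f$-stable sublattices $L_1,\dots,L_n$ with $\chi_{f|L_i}=\Phi_k$, exactly one of signature $(2,s-2)$, and then takes a combination $t^Nx_1+tx_2+\cdots$ with $t$ transcendental to force $P^{\perp}\cap T=0$ (and, in the multiplicity-one case, uses that $\Z[x]/(\Phi_k)$ is a PID together with Galois transitivity of the eigenvalues). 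You instead work uniformly in the multiplicity: the $\Q(\zeta_n)$-vector-space structure on $T_{\Q}$ gives injectivity of the projection onto the distinguished isotypic piece $W_{[\zeta_0]}$ -- this is your substitute for the paper's Galois-transitivity step -- and then each nonzero lattice vector imposes a proper projective-hyperplane condition on the choice of positive line $\ell\subset V_{\zeta_0}$ (respectively a proper sub-Grassmannian condition when $f=\pm\mathrm{id}$), so Baire category on the nonempty open positive locus produces the required $P$. What your version buys: no case split on the multiplicity, no appeal to PID/class-number facts or to the bound $\deg\le 12$ (so it works verbatim for any signature $(2,r)$), and the genericity step that the paper leaves as ``one can verify'' via transcendence is replaced by a transparent countability/Baire argument. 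What the paper's version buys: a completely explicit plane $P$ (given the sublattices $L_i$ and a transcendental $t$), phrased in lattice-theoretic terms without invoking the Hermitian-form and measure/category language. Both verifications of $\sO(P)$-membership and of the signature bookkeeping in your write-up check out.
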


The main result of this section is the following:

\begin{theorem}\label{thm:EnriquesK3}
A Salem number $\tau$ can be realized by an automorphism of an Enriques surface if and only if there exists an Enriques quadruple of entropy $\log\tau$. 
 
 \end{theorem}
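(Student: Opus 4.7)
The plan is to prove both directions via the strong Torelli theorem on the K3 double cover $\pi\colon X\to Y$, translating the abstract lattice data of an Enriques quadruple into a compatible pair of Hodge isometries on $L=H^2(X,\Z)\cong {\rm II}_{3,19}$, namely the Enriques involution $\iota$ and the lifted automorphism $\hat g$.

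\textbf{Necessity.} For $g\in{\rm Aut}(Y)$ with $d_1(g)=\tau$, I pick a lift $\hat g\in{\rm Aut}(X,\sigma)$ and set $f^\pm:=\hat g^*|_{L^\pm}$ with $L^\pm$ as in (\ref{eq:L+}), $T:=T_X$ (which lies in $L^-$ since $\sigma^*\omega_X=-\omega_X$), and $\phi\colon G(L^-)\to G(L^+)$ the natural gluing map arising from $L^-\oplus L^+\subset L$; a determinant count ($|\det L^\pm|=2^{10}$ and $|\det L|=1$) shows that $\phi$ is defined on the full glue group. The verifications of Definition \ref{def:triple} are then as follows: (1) follows from $\pi^*(H^2(Y,\Z)_f)=L^+$ transporting $g^*$ to $f^+$; (2) and (3) are Lemma \ref{lem:gL-}; (4) uses $\rho(X)\ge{\rm rk}\,L^+=10$; (5) is the classical fact that the minimal polynomial of any K3 automorphism on the transcendental lattice is cyclotomic; (6) is Lemma \ref{lem:noroots}; (7) is the construction of $\phi$; and for (8) I take $h=\pi^*H$ for $H$ ample on $Y$, so that $h$ is ample on $X$, $h^\perp_{T^\perp_L}\subset{\rm NS}(X)$ is root-free, and $h,\hat g^*(h)$ both lie in the ample chamber.

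\textbf{Sufficiency.} Given an Enriques quadruple, I set $L:=L^-\oplus_\phi L^+$, $F:=f^-\oplus_\phi f^+\in{\rm O}(L)$, and let $\iota\in{\rm O}(L)$ act as $+1$ on $L^+$ and $-1$ on $L^-$. Lemma \ref{lem:P} supplies an $F$-invariant positive-definite plane $P\subset T_\R$ with $P^\perp_{T_\R}\cap T=0$; I pick $\omega\in P_\C$ on an eigenline of $f^-|P$, so that $\omega^2=0$, $(\omega,\bar\omega)>0$ and $\omega^\perp\cap T=0$. Surjectivity of the period map produces a K3 surface $X$ with period $\omega$, and by composing the marking with a Weyl element of $T^\perp_L$ I arrange that the chamber $\mathcal{M}$ supplied by condition (8) is identified with the K\"ahler cone $\mathcal{K}(X)$. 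Both $\iota$ and $F$ are then Hodge isometries ($\iota\omega=-\omega$, $F\omega=f^-\omega\in\C\omega$) stabilizing $\mathcal{M}=\mathcal{K}(X)$: $F(\mathcal{M})=\mathcal{M}$ is condition (8), while $\iota(h)=h$ together with $h\in\mathcal{M}$ forces $\iota(\mathcal{M})=\mathcal{M}$. Strong Torelli then realizes $\iota$ and $F$ as automorphisms $\sigma,\hat g\in{\rm Aut}(X)$; Nikulin's theorem on K3 involutions with invariant sublattice $E_{10}(2)$ makes $\sigma$ fixed-point free, so $Y:=X/\sigma$ is an Enriques surface. Since $F$ respects the rational splitting $L_\Q=L^+_\Q\oplus L^-_\Q$, it commutes with $\iota$; hence $\hat g$ commutes with $\sigma$, descends to $g\in{\rm Aut}(Y)$, and $d_1(g)=\tau$.

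\textbf{Main obstacle.} The subtlest step is the simultaneous K\"ahler-cone compatibility demanded by strong Torelli: both $\iota$ and $F$ must preserve $\mathcal{K}(X)$. The decisive observation is that both already stabilize one common chamber $\mathcal{M}$ of the positive cone of $T^\perp_L$---$\iota$ because it fixes $h\in L^+\cap\mathcal{M}$, $F$ by the positivity condition (8)---so a single re-marking of $X$ by a Weyl element of $T^\perp_L$ aligning $\mathcal{M}$ with $\mathcal{K}(X)$ handles both isometries simultaneously. The commutativity $\hat g\sigma=\sigma\hat g$ is then automatic from $F$ acting diagonally on $L^+\oplus L^-$.
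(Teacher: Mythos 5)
Your proposal is correct and follows essentially the same route as the paper: necessity by restricting a lift $\hat g$ to $L^{\pm}$ and invoking Lemmas \ref{lem:gL-} and \ref{lem:noroots} with $h=\pi^*H$, and sufficiency via Lemma \ref{lem:P}, surjectivity of the period map, a Weyl-group conjugation (your ``re-marking'') aligning the $f$-invariant chamber with the K\"ahler cone so that strong Torelli applies simultaneously to $F$ and $\iota$, followed by Nikulin's criterion for fixed-point freeness and the diagonal-action argument for commutativity. No substantive difference from the paper's proof.
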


\begin{proof}
Suppose $\tau$ can be realized by an automorphism $g:Y\longrightarrow Y$ of an Enriques surface $Y$. Let $\sigma :X\longrightarrow X$ be the fixed point free involution of the covering K3 surface $X$ such that $X/\sigma=Y$. Let $\pi: X\longrightarrow Y$ be the natural quotient map. Let $T_X$  and $\omega_X\in T_X\otimes \C$ denote the transcendental lattice and a nonzero holomorphic two form on $X$ respectively. Let $\hat{g}\in {\rm Aut}(X)$ denote a lift of $g$. Recall  $$H^2(X,\Z)^{\sigma^*}\cong E_{10}(2),\,\,(H^2(X,\Z)^{\sigma^*})^\perp\cong U\oplus E_{10}(2)$$ see (\ref{eq:L+}). Note that the even unimodular lattice $H^2(X,\Z)$ is a primitive extension of $H^2(X,\Z)^{\sigma^*}$ and $(H^2(X,\Z)^{\sigma^*})^\perp$, and both $H^2(X,\Z)^{\sigma^*}$ and $(H^2(X,\Z)^{\sigma^*})^\perp$ are $\hat{g}^*$-stable. Thus,  there exists a gluing map $$\phi: G((H^2(X,\Z)^{\sigma^*})^\perp)\longrightarrow G(H^2(X,\Z)^{\sigma^*})$$ such that $$(H^2(X,\Z)^{\sigma^*})^\perp\oplus_{\phi}H^2(X,\Z)^{\sigma^*}=H^2(X,\Z)\cong {\rm II}_{3,19}$$ and  $\hat{g}^*=\hat{g}^*|(H^2(X,\Z)^{\sigma^*})^\perp\oplus_{\phi} \hat{g}^*| H^2(X,\Z)^{\sigma^*}$. To simplify notations, we set $$f^+:=\hat{g}^*| H^2(X,\Z)^{\sigma^*}, \,\, f^-:=\hat{g}^*|(H^2(X,\Z)^{\sigma^*})^\perp.$$ Since the entropy of $g$ is $\log \tau$, the entropy of $f^+$ is also $\log\tau$.

By Lemma \ref{lem:gL-}, $f^-$ is of finite order and $$\chi_{f^-}(x)\equiv (1+x)^2\chi_{f^+}(x)\; \text{mod } 2.$$ Since $T_X$ is $\hat{g}^*$-stable, $T_X$ is also $f^-$-stable. Let $\omega_X$ be a nonzero holomorphic 2-form on $X$. Since $T_X$ is the unique minimal sublattice of $H^2(X,\Z)$ such that $$\C[\omega_X]\in T_X\otimes \C ,$$ the minimal polynomial of $f^-|T_X$ is irreducible. By Lemma \ref{lem:noroots}, the orthogonal complement $N$ to $T_X$ in $(H^2(X,\Z)^{\sigma^*})^\perp$ has no roots. Note that $f^-\oplus_{\phi}f^+=\hat{g}^*$ preserves the ample cone, and $H^2(X,\Z)^{\sigma^*}$ contains an ample class, say $h$. Thus, $h$ and  $(f^-\oplus_{\phi}f^+)(h)$ are in the same chamber of ${\rm NS}(X)=(T_X)^{\perp}_{H^2(X,\Z)}$. Then the 4-tuple $(f^+,f^-,T_X,\phi)$ is an Enriques quadruple of entropy $\log \tau$. This completes the proof of "only if " part of the theorem.

Suppose  $(f^+,f^-,T, \phi)$ is an Enriques quadruple of entropy $\log\tau$. By the three conditions 3)-5), we can apply Lemma \ref{lem:P} to our $T$. Hence, $T_{\R}$ contains an $f^-$-invariant plane $P$ such that $P$ has signature $(2,0)$, $f_{\R}^-|P\in {\rm SO}(P)$, and $P^{\perp}_{T_{\R}}\cap T=0$.
Take an orthonormal basis $u,v$ of $P$. Let $\omega=u+\sqrt{-1}v$. Then $(\omega,\omega)=0$ and $(\omega,\overline{\omega})>0$, and $\omega$ is an eigenvector of $f^-$. Note that $\omega\in (L^-\oplus_{\phi}L^+)\otimes\C$ and $L^-\oplus_{\phi}L^+\cong {\rm II}_{3,19}$. Thus, by surjectivity of Period mapping for complex K3 surfaces, there exist a complex K3 surface $X$, a nonzero holomorphic two form $\omega_X$ on $X$, and an isometry $$F: H^2(X,\Z)\longrightarrow L^-\oplus_{\phi}L^+$$ such that $F (\omega_X)=\omega$. To simplify notations, we identify $H^2(X,\Z)$ with $L^-\oplus_{\phi}L^+$ via $F$. By the choice of $P$,  the sublattice $T$ is the minimal primitive sublattice of $L^-\oplus_{\phi}L^+$ containing $\omega$ after tensoring with $\C$. Thus, $$T_X=T,\,\, {\rm NS}(X)=T^{\perp}_{L^-\oplus_{\phi}L^+},$$ where $T_X$ and ${\rm NS}(X)$ denote the transcendental lattice and N\'eron-Severi lattice of $X$ respectively. 

Choose $h\in L^+$ in the condition 8) of Definition \ref{def:triple}. Then there exists $w\in W(T^{\perp}_{L^-\oplus_{\phi}L^+})$ such that $w(h)$ is an ample class of $X$, where $W(T^{\perp}_{L^-\oplus_{\phi}L^+})$ is the Weyl group of $T^{\perp}_{L^-\oplus_{\phi}L^+}$. Here we use the fact that the ample cone of a projective K3 surface is the fundamental domain of the action on the positive cone by the Weyl group. Let $$\hat{f}:=w\circ (f^-\oplus_{\phi}f^+)\circ w^{-1},\,\,\hat{\sigma}:=w\circ (-{\rm id}_{L^-}\oplus_{\phi}{\rm id}_{L^+})\circ w^{-1}.$$ Then $\hat{f}(w(h))=w((f^-\oplus_{\phi}f^+)(h))$ and $\hat{\sigma}(w(h))=w(h)$ are ample classes of $X$. Note that $\hat{f}\hat{\sigma}=\hat{\sigma}\hat{f}$. Then, by global Torelli Theorem, there exist automorphisms $f,\sigma \in {\rm Aut}(X)$ such that $$\hat{f}=f^*,\,\, \hat{\sigma}=\sigma^*,\,\, f\sigma=\sigma f.$$ Note that $(L^-\oplus_{\phi}L^+)^{\sigma}=L^+$. Thus, ${\rm rk}((L^-\oplus_{\phi}L^+)^{\sigma})=10$ and $l(G((L^-\oplus_{\phi}L^+)^{\sigma}))=10$. Moreover, $L^+$ is a $2$-elementary lattice satisfying $b_{L^+}(x,x)=0\in \Q/\Z$ for any $x\in G(L^+)$ (i.e., the invariant $\delta_{L^+}=0$, see \cite[Definition 4.2.1]{Ni83}).  Then by \cite[Theorem 4.2.2]{Ni83}, the fixed point locus $X^\sigma=\emptyset$. Thus, $\sigma$ is fixed point free, and $f$ descends to an automorphism of the Enriques surface $X/\sigma$ of entropy $\tau$. This completes the proof of the theorem. \end{proof}

\begin{remark}\label{rmk:EnriquesK3}

Any automorphism $\varphi$ of an Enriques surface $S$ admits exactly two liftings, say $\psi_1,\psi_2$, to the covering K3 surface $\tilde{S}$. Moreover, $\psi_1=\psi_2 \sigma$, where $ \sigma$ is the fixed-point free involution of $\tilde{S}$ such that $S=\tilde{S}/\sigma$.  In fact, from the proof of the theorem,  clearly both $(f^+,f^-,T_X,\phi)$  and $(f^+,- f^-,T_X,\phi)$ are Enriques quadruples if one of them is an Enriques quadruple (we will use this observation in Section \ref{sect:nine} (see e.g. the proof of Theorem \ref{thm:rulingout1.42})).

\end{remark}

We conclude this section with the following two lemmas which will be used later.

\begin{lemma}\label{lem:entropyzero}
Let $Y$ be a K3 surface such that $T_Y\cong U\oplus U(2)$. Then any automorphism of $Y$ is of zero entropy.
\end{lemma}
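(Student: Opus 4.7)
The plan is to combine a Hodge-theoretic restriction on $f^{\ast}|T_Y$ with a Shioda--Inose analysis to force $d_1(f)=1$.

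First, since $T_Y\cong U\oplus U(2)$ has signature $(2,2)$, the surface $Y$ is projective. The real $2$-plane $P=\R\cdot {\rm Re}(\omega_Y)+\R\cdot {\rm Im}(\omega_Y)\subset T_Y\otimes\R$ is positive definite and exhausts the positive part of the signature; since $f^{\ast}\omega_Y\in\C\omega_Y$, both $P$ and its negative-definite orthogonal complement $P^{\perp}$ are $f^{\ast}$-stable. Hence $f^{\ast}|T_Y$ acts as an orthogonal transformation on each definite summand, so all of its eigenvalues lie on the unit circle; by Kronecker's theorem $\chi_{f^{\ast}|T_Y}(x)$ is a product of cyclotomic polynomials and $f^{\ast}|T_Y$ has finite order.

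Suppose for contradiction $d_1(f)=\tau>1$, a Salem number with Salem polynomial $S(x)$. Then $\tau$ is not a root of unity, so $S(x)$ must divide $\chi_{f^{\ast}|{\rm NS}(Y)}(x)$. Observe that $T_Y=U\oplus U(2)$ admits a primitive embedding into $U\oplus U\oplus U$ (for instance via the diagonal $U(2)\hookrightarrow U\oplus U$), so by Morrison's theorem $Y$ carries a Shioda--Inose structure: there exist an abelian surface $A$ with $T_A\cong T_Y$ and a Nikulin involution $\iota\in{\rm Aut}(Y)$ with $Y/\iota$ birational to ${\rm Km}(A)$. Replacing $f$ by a suitable power I may assume $f$ commutes with $\iota$, so that $f$ descends to an automorphism $\bar f$ of ${\rm Km}(A)$.

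A direct computation of the orthogonal complement of $T_Y$ inside $U^{3}=H^2(A,\Z)$ gives ${\rm NS}(A)\cong U(2)$, so $\rho(A)=2$. An analysis of the isometries of $U(2)$ (whose trace values are constrained to $\{-2,0,2\}$) shows that no fundamental unit of a real quadratic order can act as an isometry of ${\rm NS}(A)$, ruling out real multiplication; hence $A$ must be isogenous to a product $E_1\times E_2$ of non-isogenous non-CM elliptic curves, and ${\rm Aut}(A)$ is finite. Consequently $d_1(\bar f)=1$. On the $\iota$-antiinvariant part of ${\rm NS}(Y)$---the Nikulin lattice of rank $8$---the stabilizer of $\iota$ in ${\rm Aut}(Y)$ acts through the finite permutation group on the $8$ fixed points of $\iota$, so the spectral radius is also $1$. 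Since $f^{\ast}$ preserves the $\iota^{\ast}$-eigenspace decomposition of $H^{1,1}(Y)$, $d_1(f)$ equals the maximum of these two spectral radii, namely $1$, contradicting the assumption.

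The hardest step will be justifying the descent of $d_1$ through the Shioda--Inose correspondence and pinning down the finite action on the Nikulin lattice. A secondary obstacle is the rigorous elimination of real multiplication on $A$; the isometry-trace argument sketched above should suffice but requires case analysis over all real quadratic orders, which can be streamlined by noting that $U(2)$ has all squares in $4\Z$ while trace forms of such orders produce elements of square $2$.
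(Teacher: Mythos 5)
Your route through a Shioda--Inose structure has two genuine gaps, and the second one is fatal to the overall strategy. First, the step ``replacing $f$ by a suitable power I may assume $f$ commutes with $\iota$'' is unjustified: the Nikulin involution $\iota$ of a Shioda--Inose structure is symplectic, is not unique, and is not central in ${\rm Aut}(Y)$; $f\iota f^{-1}$ is merely another Nikulin involution, and no power of $f$ need commute with $\iota$. (Contrast this with the involution the paper uses: since $T_Y\cong U\oplus U(2)$ is $2$-elementary, $Y$ carries the canonical non-symplectic involution $\theta$ with $\theta^*|T_Y=-\,{\rm id}$ and $\theta^*|{\rm NS}(Y)={\rm id}$; every $\varphi^*$ preserves $T_Y$ and ${\rm NS}(Y)$ and hence commutes with $\theta^*$, so by the Torelli theorem $\theta$ is central --- that is exactly the feature your $\iota$ lacks.) Second, even granting the descent to $\bar f\in{\rm Aut}({\rm Km}(A))$, the conclusion $d_1(\bar f)=1$ does not follow from finiteness of ${\rm Aut}(A)$: automorphisms of a Kummer surface are not in general induced by automorphisms of the abelian surface, and Kummer surfaces of Picard number $18$ typically have infinite automorphism groups containing positive-entropy elements. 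So the contradiction you aim for is never established; in effect you would need to prove that ${\rm Km}(A)$ itself has no positive-entropy automorphism compatible with the quotient, which is a statement of the same difficulty as the lemma. (A smaller issue: the case analysis for $\rho(A)=2$ omits simple abelian surfaces with CM by a quartic field and products where exactly one factor has CM, though this is repairable.)

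For comparison, the paper's proof is short and avoids all of this: by Nikulin's classification the fixed locus of the canonical involution $\theta$ is a smooth elliptic curve $C$ plus eight rational curves; since $\theta$ is central, any $\varphi\in{\rm Aut}(Y)$ preserves ${\rm Fix}(\theta)$ and hence fixes the class $[C]$, a nonzero nef isotropic class, and then \cite[Theorem 1.4 (1)]{Og07} gives zero entropy. If you want to salvage your approach, the missing ingredients are precisely a canonical (central) involution to descend along and an argument controlling the entropy on the quotient; both are supplied more directly by the $2$-elementary structure than by the Shioda--Inose one.
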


\begin{proof}
Since $Y$ is a 2-elementary K3 surface, $Y$ has a unique automorphism $\theta$ such that $\theta^*|T_Y=-{\rm id}_{T_Y}$ and $\theta^*| {\rm NS}(Y)={\rm id}_{{\rm NS}(Y)}$. By \cite[Theorem 4.2.2]{Ni83}, the fixed locus of $\theta$ is disjoint union of  a smooth elliptic curve $C$ and eight smooth rational curves (see e.g. \cite[Section 4]{BP83} for explicit examples of $Y$ and $\theta$). Let $\varphi $ be any automorphism of $Y$. Since $\theta$ is in the center of ${\rm Aut}(Y)$, $\varphi^*([C])=[C]$. Then by \cite[Theorem 1.4 (1)]{Og07}, $\varphi$ is of zero entropy.  \end{proof}

\begin{lemma}\label{lem:E8(2)positivity}

Let $f\in {\rm O}(L^-)$ be an isometry of finite order such that 1) there exists a $f$-stable primitive sublattice $N\subset L^-$ satisfying $N\cong E_8(2)$ and 2) $T$ is isomorphic to $U\oplus U(2)$, where $T:=N^{\perp}\subset L^-$. Let $g\in {\rm O}(L^+)$ be an isometry with spectral radius $>1$. Then there exists no gluing map $\phi: G(L^-)\longrightarrow G(L^+)$ such that both of the following two statements are true

i) the map $f\oplus g$ extends to $L^-\oplus_{\phi}L^+\cong {\rm II}_{3,19}$, and 

ii) the restriction of $f\oplus_{\phi} g $ to  $N\oplus_{\phi}L^+\subset L^-\oplus_{\phi}L^+$ is positive.
\end{lemma}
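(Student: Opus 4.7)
The plan is to argue by contradiction. Assuming such a gluing $\phi$ exists, set $F := f\oplus_\phi g$ and $M := L^-\oplus_\phi L^+ \cong {\rm II}_{3,19}$, and construct a complex K3 surface $Y$ with $T_Y \cong U\oplus U(2)$ carrying an automorphism of positive entropy; this will contradict Lemma \ref{lem:entropyzero}.

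First I would verify that $T^\perp_M = N\oplus_\phi L^+$ as a primitive sublattice of $M$. Since $T = N^\perp_{L^-}$ is primitive in $L^-$ and $L^-$ is primitive in $L^-\oplus_\phi L^+$, the sublattice $T$ is primitive in $M$; a rank count ($\rank(N\oplus L^+) = 18 = \rank(T^\perp_M)$) then identifies the primitive hull of $N\oplus L^+$ in $M$ with $T^\perp_M$. Thus $F$ preserves this orthogonal decomposition, acting as the finite-order $f|T$ on $T$ and coinciding with the positive isometry from hypothesis (ii) on $T^\perp_M$.

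Next, using surjectivity of the period map for complex K3 surfaces, I would pick a \emph{generic} period $\omega \in T\otimes\C$ with $(\omega,\omega)=0$ and $(\omega,\bar\omega)>0$; genericity ensures that the minimal primitive sublattice of $M$ containing $\omega$ is exactly $T$. This produces a K3 surface $Y$ with a marking $H^2(Y,\Z) \cong M$ satisfying $T_Y \cong U\oplus U(2)$ and ${\rm NS}(Y) = N\oplus_\phi L^+$. Now let $k$ be the order of $f \in {\rm O}(L^-)$, so that $F^k$ acts as $\id$ on $L^-$ (hence on $T$) and as $g^k$ on $L^+$. In particular $F^k$ fixes $\omega$ and is therefore a Hodge isometry of $H^2(Y,\Z)$; its restriction to ${\rm NS}(Y) = N\oplus_\phi L^+$ is the $k$-th power of a positive isometry and hence still preserves a chamber, while its spectral radius is $d_1(g)^k > 1$. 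After composing on both sides with a Weyl element of $W({\rm NS}(Y))$ (extended by identity on $T$, so still a Hodge isometry) that carries the preserved chamber onto the ample chamber of $Y$, the global Torelli theorem produces $\hat F \in \Aut(Y)$ with entropy $k\cdot \log d_1(g) > 0$, contradicting Lemma \ref{lem:entropyzero}.

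The main technical hurdle I anticipate is the first step — cleanly identifying $T^\perp_M$ with the glued lattice $N\oplus_\phi L^+$ and tracking how $\phi$ restricts to the induced gluing of $N$ and $L^+$ — together with making the ``generic $\omega$'' argument precise so that indeed $T_Y = T$. The remaining steps essentially follow the Torelli-based template already used in the proof of Theorem \ref{thm:EnriquesK3}, so the conceptual content of the proof is concentrated in the lattice-theoretic identification and in invoking Lemma \ref{lem:entropyzero} as the source of the contradiction.
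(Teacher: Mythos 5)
Your proposal is correct and follows essentially the same route as the paper: pass to a power of $f\oplus_\phi g$ that is trivial on $L^-$ (hence on $T$), use surjectivity of the period map and the Torelli theorem to realize it as an automorphism of positive entropy on a K3 surface with transcendental lattice $T\cong U\oplus U(2)$, and contradict Lemma \ref{lem:entropyzero}. The extra steps you spell out (identifying $T^\perp_{M}$ with the primitive hull of $N\oplus L^+$, choosing a generic period so that $T_Y=T$, and conjugating by a Weyl element before applying Torelli) are exactly the details the paper leaves implicit.
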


\begin{proof}
Suppose otherwise, i.e., there exists a gluing map $\phi: G(L^-)\longrightarrow G(L^+)$ satisfying both i) and ii).

Clearly, we can choose a sufficiently large $n$ such that both $f^n| L^-$ and $\bar{g}^n| G(L^+)$ are identity maps. By ii), the restriction of $f^n\oplus_{\phi} g^n $ to  $N\oplus_{\phi}L^+$ is positive. By Torelli Theorem and surjectivity of Period mapping, there exist an automorphism $F:X\longrightarrow X$ of a K3 surface $X$ and an isometry $\Phi: H^2(X,\Z)\longrightarrow L^-\oplus_{\phi}L^+$ such that 

a) $\Phi\circ F^*=(f^n\oplus_{\phi} g^n) \circ \Phi$, and

b) $\Phi (T_X)=T$, where $T_X$ denotes the transcendental lattice of $X$.

Thus, $F$ is of positive entropy, which contradicts Lemma \ref{lem:entropyzero}. This completes the proof of the lemma. \end{proof}


\section{Realization of $\tau_8$ by an Enriques surface automorphism}\label{sect:eight}
\noindent
In this section, we prove realizability of the Salem number $\tau_8$ in Theorem \ref{thm:main} as the first dynamical degree of an Enriques surface automorphism. Recall that the Salem polynomial of $\tau_8$ is
$$S_8(x):=1 - x^2 - 2x^3 - x^4 + x^6.$$ 

\begin{theorem}\label{thm:minimum}
There exists an automorphism $g: S \longrightarrow S$ of an Enriques surface $S$ such that:

i) The characteristic polynomial of $g^*: H^2(S,\C)\longrightarrow H^2(S,\C)$ is $$(-1 + x)^3 (1 + x)S_8(x)\,\, ;$$

ii) Let $\tilde{S}$ be the universal cover of $S$. Then there is a lifting, say $\tilde{g}: \tilde{S}\longrightarrow \tilde{S}$, of $g$ such that the characteristic polynomial of $\tilde{g}^*:  H^2(\tilde{S},\C)\longrightarrow H^2(\tilde{S},\C)$ is 
$$(-1 + x)^5 (1 + x)^3 (1 + x^2)^2 (1 + x^4)S_8(x)\,\, ;$$ 
and 

iii)The transcendental lattice $T_{\tilde{S}}$ of $\tilde{S}$ is isometric to ${\rm I}_{2,2}(4)$, and the action $\tilde{g}^*|T_{\tilde{S}}$ is of order $8$.

In particular, the entropy of $g$ is $h(g) = \log \tau_8$, i.e., $d_1(g) = \tau_8$. 
\end{theorem}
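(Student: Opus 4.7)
The plan is to apply Theorem \ref{thm:EnriquesK3}: it suffices to exhibit an explicit Enriques quadruple $(f^+, f^-, T, \phi)$ of entropy $\log \tau_8$ whose characteristic polynomial data matches (i)--(iii). Matching ranks forces $\chi_{f^+}(x) = (x-1)^3(x+1)S_8(x)$ on $L^+ \cong E_{10}(2)$ (rank $10$) and $\chi_{f^-}(x) = (x-1)^2(x+1)^2(x^2+1)^2(x^4+1)$ on $L^- \cong U \oplus E_{10}(2)$ (rank $12$); the transcendental piece $T \cong {\rm I}_{2,2}(4)$ is the rank-$4$ primitive $f^-$-stable sublattice on which $f^-$ acts by an order-$8$ isometry with minimal polynomial $\Phi_8(x) = x^4+1$. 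A quick mod-$2$ check, using $S_8(x) \equiv (x+1)^6 \pmod 2$ and $\Phi_8(x) \equiv (x+1)^4 \pmod 2$, confirms that the congruence $\chi_{f^-}(x) \equiv (1+x)^2 \chi_{f^+}(x) \pmod 2$ required in Definition \ref{def:triple}(2) is satisfied (both sides reduce to $(x+1)^{12}$).

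First I would construct $f^+ \in {\rm O}^+(L^+)$. Since $S_8(x)$ is pseudo-simple (Remark \ref{rmk:psimple}), Theorem \ref{thm:twist} says every $S_8$-lattice is a twist of a fixed one with square-free determinant, so one can search among finitely many twists for a hyperbolic $S_8$-lattice $M$ of signature $(1,5)$ glueable, along an isometry of $2$-elementary discriminant groups, to a negative-definite rank-$4$ cyclotomic lattice carrying the action of $(x-1)^3(x+1)$ so that the result is isometric to $E_{10}(2)$. Positivity of the resulting $f^+$ is then verified by Algorithm \ref{alg:positivity} applied to Theorem \ref{thm:positivity}.

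Next I would construct $f^-$. The key ingredient is $T \cong {\rm I}_{2,2}(4)$, which has signature $(2,2)$, is $2$-elementary, and admits an obvious order-$8$ isometry with characteristic polynomial $\Phi_8(x)$. I would then build a negative-definite, root-free lattice $N$ of rank $8$ carrying an isometry with characteristic polynomial $(x-1)^2(x+1)^2(x^2+1)^2$, and glue $T \oplus N$ primitively, along a compatible gluing of $2$-elementary discriminant groups, to obtain $L^- \cong U \oplus E_{10}(2)$. This forces $T^\perp_{L^-} = N$, so condition (6) of Definition \ref{def:triple} holds by the root-freeness of $N$, and (4), (5) hold by construction.

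Finally, one produces the overall gluing $\phi : G(L^-) \to G(L^+)$ so that $L^- \oplus_\phi L^+ \cong {\rm II}_{3,19}$ and $f^- \oplus f^+$ descends to $f^- \oplus_\phi f^+$; this is a finite check on $\overline{f^\pm}$ on $G(L^\pm)$ since both glue groups are $2$-elementary. A vector $h \in L^+$ with $h^2 > 0$ is then sought such that $h^\perp \cap T^\perp_{L^- \oplus_\phi L^+}$ has no roots and such that $h$ and $(f^- \oplus_\phi f^+)(h)$ lie in the same chamber of $T^\perp_{L^- \oplus_\phi L^+}$; again this is a finite check via Algorithm \ref{alg:positivity}. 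Once all eight conditions of Definition \ref{def:triple} are verified on an explicit set of matrices, Theorem \ref{thm:EnriquesK3} yields an Enriques surface $S$ with an automorphism $g$, and by construction $\tilde{g}^*$ has the claimed characteristic polynomial and $T_{\tilde S} = T \cong {\rm I}_{2,2}(4)$ with $\tilde{g}^*|T_{\tilde S}$ of order $8$. The main obstacle is coordinating \emph{all} the lattice-theoretic constraints at once (unimodularity of the glued lattice, the mod-$2$ congruence between $\chi_{f^\pm}$, root-freeness of $N$, existence of a compatible gluing $\phi$, and positivity of both $f^+$ and the chamber condition on $h$); this is precisely where the reduction to computer algebra via Theorems \ref{thm:positivity} and \ref{thm:EnriquesK3} pays off, and one expects to find and verify the required data with the programs based on \cite{Mc11b}.
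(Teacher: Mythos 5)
Your proposal follows essentially the same route as the paper: reduce via Theorem \ref{thm:EnriquesK3} to exhibiting an explicit Enriques quadruple, build $f^+$ from a twist of the principal $S_8(x)$-lattice glued to a rank-$4$ cyclotomic piece with characteristic polynomial $(x-1)^3(x+1)$ (the paper uses $D_4$ and then rescales the glued $E_{10}$ by $2$), build $f^-$ by gluing ${\rm I}_{2,2}(4)$ with an order-$8$ isometry to a root-free negative definite rank-$8$ lattice carrying $(x-1)^2(x+1)^2(x^2+1)^2$, and verify the gluing to ${\rm II}_{3,19}$, the vector $h$, and positivity by computer algebra via Algorithm \ref{alg:positivity}. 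The only small inaccuracies are that $G({\rm I}_{2,2}(4))\cong(\Z/4)^4$ is $2$-primary but not $2$-elementary (the paper glues $L_2$ and $L_3$ along the $2$-elementary subgroups $\{2x\}$), and that the $S_8$-twist and cyclotomic piece glue to $E_{10}$ rather than directly to $E_{10}(2)$, which one then obtains by rescaling; neither affects the argument.
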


\noindent
{\bf The Salem factor and the isometry of $E_{10}$}.  Let $(L_0,f_0)$ be the principal $S_8(x)$-lattice (see Section 3). Then $L_0$ is an even lattice of signature $(3,3)$ and $G(L_0)\cong \F_2^2$.  Let $$a=P(f_0+f_0^{-1})\in \Z[f_0+f_0^{-1}],$$ where $P(y)=1+y$.  Note that $1+y$ is a unit of the ring $\Z[y]/(R_8(y))$, where $R_8(y)$ is the trace polynomial of the Salem polynomial of $\tau_8$. Then the twist $L_0(a)$ is an even lattice of signature $(1,5)$ and  $G(L_0(a))\cong \F_2^2$. The order of $\overline{f_0}|G(L_0(a))$ is $2$. The bilinear form $b_{L_0(a)}$ on $G(L_0(a))$ is isomorphic to the bilinear form $-b_{D_4}$ on $G(D_4)$. There exists, up to conjugation, a unique isometry $f_1\in {\rm O}(D_4)$ such that $$\chi_{f_1}(x)=(-1+x)^3(1+x)$$ and the order of $\overline{f_1}|G(D_4)$ is $2$. Then there exists a gluing map $$\phi_1: G(D_4) \longrightarrow G(L_0(a)),$$ such that 
\begin{equation}\label{eq:L^+}
D_4\oplus_{\phi_1}L_0(a)\cong E_{10},
\end{equation}
 and $f_1\oplus f_0$ extends to 
\begin{equation}\label{eq:f^+}
f_1\oplus_{\phi_1} f_0 \in {\rm O}(D_4\oplus_{\phi_1} L_0(a)).
\end{equation}  Let $(L^+,f^+)$ be the pair $((D_4\oplus_{\phi_1}L_0(a))(2), f_1\oplus_{\phi_1}f_0)$. Then the order of the map $\overline{f^+}| G(L^+)$ is $8$, which gives the hint that we need to construct an isometry $f^-\in {\rm O}(L^-)$ satisfying ${\rm Ord}(\overline{f^-}|G(L^-))=8$ (in particular, $8$ divides the order of $f^-$).

\begin{figure}
\xymatrix@+3pc{
&\fbox{        
  $\begin{array}{l}
  {\rm Period \; 2\; lattice\; }D_4 \\  \;\;\;\;\; \;\;\; (0, \; 4) \end{array}$
  }  \ar@{-}[r]^{\displaystyle \F_2^2} &\fbox{        
  $\begin{array}{l}
  {\rm Salem \; factor\;} L_0(a) \\  \;\;\;\;\; \;\;\; (1, \; 5) \end{array}$
  }
  }
\caption{The isometry of $E_{10}$ of spectral radius $\tau_8$}
\end{figure}
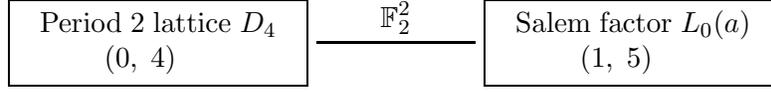

\noindent
{\bf The transcendental factor and the isometry of $U\oplus E_{10}(2)$}. Let $(L_0^\prime,f_0^\prime)$ be the principal $(1+x^4)$-lattice.  Let $(L_2,f_2)$ be the twist $(L_0^\prime(a^\prime),f_0^\prime)$, where $a^\prime=-4-2(f_0^\prime+f_0^{\prime -1})\in \Z[f_0^\prime+f_0^{\prime -1}]$. Then  $L_2\cong {\rm I}_{2,2}(4)$ and $G(L_2)\cong (\Z/4)^4$. There is a pair $(L_3,f_3)$ of an even lattice $L_3$ and an isometry $f_3\in {\rm O}(L_3)$ such that

      i) ${\rm sig}(L_3)=(0,8)$, $G(L_3)\cong (\Z/2)^2\oplus (\Z/4)^4$, $L_3$ has no roots,
      
      ii) $\chi_{f_3}(x)=(-1+x)^2(1+x)^2(1+x^2)^2$,
      
      iii) For $H_1:=\{2x | x\in G(L_2) \}\cong \F_2^4$ and  $H_2:=\{2x | x\in G(L_3) \}\cong \F_2^4$, there exists a gluing map $$\phi_2: H_1 \longrightarrow H_2,$$ such that 
\begin{equation}\label{eq:L^-}
L_2\oplus_{\phi_2}L_3\cong U\oplus E_{10}(2),
\end{equation}
 and $f_2\oplus f_3$ extends to 
\begin{equation}\label{eq:f^-}
f_2\oplus_{\phi_2} f_3 \in {\rm O}(L_2\oplus_{\phi_2} L_3).
\end{equation}  The explicit description of the pair $(L_3,f_3)$ is contained in  a longer version of this paper \cite[Page 22]{OY18} ($L_3$ is in fact a sublattice of $E_8$, and one possible approach to find all rank 8 even negative definite lattices of glue group isomorphic to $(\Z/2)^2\oplus (\Z/4)^4$ is to search for such lattices by considering sublattices in $E_8\oplus E_8(-1) $ generated by eight randomly chosen elements in $E_8\oplus E_8(-1) $, cf. \cite[Theorem 1.1.2]{Ni80}). Let $(L^-,f^-)$ be the pair $(L_2\oplus_{\phi_2}L_3, f_2\oplus_{\phi_2}f_3)$.

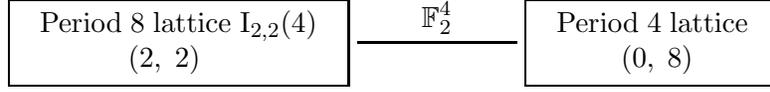
\begin{figure}
\xymatrix@+3pc{
&\fbox{        
  $\begin{array}{l}
  {\rm Period \; 8\; lattice\; }{\rm I}_{2,2}(4) \\  \;\;\;\;\; \;\;\; \;\;\; (2, \; 2) \end{array}$
  }  \ar@{-}[r]^-{\displaystyle \F_2^4} &\fbox{        
  $\begin{array}{l}
  {\rm Period \; 4\; lattice} \\  \;\;\;\;\; \;\;\; (0, \; 8) \end{array}$
  }
  }
\caption{The isometry of $U\oplus E_{10}(2)$}
\end{figure}

\begin{proof}[{\bf Proof of Theorem~\ref{thm:minimum}}]
Let $(L^+,f^+)$, $(L^-,f^-)$ be as constructed above. Then there exists a gluing map $\phi : G(L^-)\longrightarrow G(L^+)$ such that i) $L^-\oplus_{\phi}L^+\cong {\rm II}_{3,19}$ and ii) $f^-\oplus f^+$ extend to $f^-\oplus_{\phi}f^+\in {\rm O}(L^-\oplus_{\phi}L^+)$ (explicit matrix forms of $L^-,L^+,f^-,f^+,\phi$ are included in a longer version of this paper \cite[Pages 23-25]{OY18}). Let $T:=L_2$. Then $T\subset L^-$ is a primitive sublattice, and $T_{L^-}^{\perp}=L_3$. Our goal is to show $(f^+,f^-,T,\phi)$ is an Enriques quadruple of entropy $\log \tau_8$. By construction, all the conditions 1)-7) in Definition \ref{def:triple} are satisfied. Let $L$ denote $T^\perp_{L^-\oplus_{\phi}L^+}$ and let $f=(f^-\oplus_{\phi}f^+)|L.$ It turns out that there exists an $h\in L^+\subset L$ (explicit description of $h$ can be found in \cite[Page 25]{OY18}) such that $h^2=496$ and both $\sR_h$ and $\sS_h$ are empty ($\sR_h$ and $\sS_h$ can be computed using Algorithm \ref{alg:positivity}), which implies  i) none of roots in $L$ is perpendicular to $h$, and ii) $h$ and $f(h)$ are in the same chamber of $L$. Thus, the condition 8) in Definition \ref{def:triple} is satisfied. By Theorem \ref{thm:EnriquesK3}, $\tau_8$ is realized by an automorphism $g: S \longrightarrow S$ of an Enriques surface $S$. Moreover, by the construction,  all the three conditions i)-iii) in Theorem \ref{thm:minimum} are satisfied (see Page 19). \end{proof} 

\begin{remark}
Roughly speaking, the Enriques quadruple $(f^+,f^-,T,\phi)$ in the proof is obtained in the process of trying to rule out $\tau_8$ like ruling out the other 7 Salem numbers in Section \ref{sect:nine} based on Theorem \ref{thm:EnriquesK3} (recall that $\tau_8$ is pseudo-simple, see Remark \ref{rmk:psimple}). However, our statement of Theorem \ref{thm:EnriquesK3} is given in an equivalent form. So, if one obtains a final output, then it is a realization. In this way, we obtained Theorem \ref{thm:minimum}.
\end{remark}


\section{Ruling out smaller Salem numbers}\label{sect:nine}
\noindent

In this section, we prove unrealizability of Salem numbers $\tau_i$ ($1 \le i \le 7$) in Table \ref{tab:8candidates} as the first dynamical degree of an Enriques surface automorphism. Let $S_i(x)$ ($1 \le i \le 7$) be the Salem polynomial of $\tau_i$.

Throughout this section, for a polynomial  $C(x)\in \Z[x]$, we use $\overline{C(x)}\in \F_2[x]$ to denote mod 2 reduction of $C(x)$. We use $L^+$ and $L^-$ to denote $E_{10}(2)$ and $U\oplus E_{10}(2)$ respectively. Let $f^+\in {\rm O}^+(L^+)$, $f^-\in {\rm O}(L^-)$, let $T\subset L^-$ be a primitive sublattice, and let $\phi: G(L^-)\longrightarrow G(L^+)$ be a gluing map. 

We use the following strategy to rule out $\tau_i$ ($1 \le i \le 7$).

\begin{strategy}\label{str:rulingout} Let $\tau$ be a pseudo-simple Salem number (all $\tau_i$ ($1 \le i \le 7$) satisfy this condition, see Remark \ref{rmk:psimple}). Let $S(x)$ denote the Salem polynomial of $\tau$. To show that $\tau$ cannot be realized by any Enriques surface automorphism, it suffices to show that there exists no Enriques quadruple of entropy $\log\tau$ (Theorem \ref{thm:EnriquesK3}). Assuming otherwise that $(f^+,f^-,T,\phi)$ is an Enriques quadruple of entropy $\log\tau$, we derive a contradiction as follows.

1. Find a {\it finite} subset $\sR\subset {\rm O}(E_{10})$ such that i) all $g\in \sR$ satisfy the property that $\chi_g(x)$ is divided by $S(x)$ and ii) any element in ${\rm O}(E_{10})$ satisfying the same property is conjugate in ${\rm O}(E_{10})$ to an  element in $\sR$ (see Remark \ref{rmk:strategy} 3)).  Since $ L^+=E_{10}(2)$ and ${\rm O}(L^+)={\rm O}(E_{10})$, $\sR$ will be viewed as a subset of  ${\rm O}(L^+)$.

2. Let $\sF\subset \F_2[x]$ denote the set consisting of the mod $2$ reduction of the characteristic polynomial of elements in $\sR$. In particular, $\overline{\chi_{f^+}(x)} \in \sF$.

3. Choose some suitable monic polynomial $C_1(x)\in \Z[x]$ such that $\overline{1+x} \nmid \overline{C_1(x)}$,  $\chi_{f^-}(x)=C_1(x)C_2(x)$ (replacing $(f^+,f^-,T,\phi)$ by $(f^+,-f^-,T,\phi)$ if necessary), and  $\overline{C_1(x)}$ and $\overline{C_2(x)}$ are coprime.  Let $L_i:={\rm Ker}(C_i(f^-))\subset L^-$, $f_i:=f^-| L_i$, $r_i:={\rm rk}(L_i)$, $i=1,2$.  By Lemma \ref{lem:L(1/2)}, it turns out that $L_1(1/2)$ is a well-defined even lattice of rank $r_1$ (cf. proof of Theorem \ref{thm:rulingout1.42}). 

4. By using Theorem \ref{thm:L2withroots} (and its variants), we show that the lattice $L_1(1/2)$ is an even negative definite $p$-elementary lattice of rank $r_1$ and determinant $p^l$ for some odd prime $p$ and some $l\ge 0$. Let $\sL_1$  be the (necessarily) finite set consisting of all such $L_1$ up to isomorphism. At this point, glue among the three lattices $L_1,L_2,L^+$  inside $L^-\oplus_{\phi}L^+$ is of the form in the Figure \ref{fig:Lp12}.

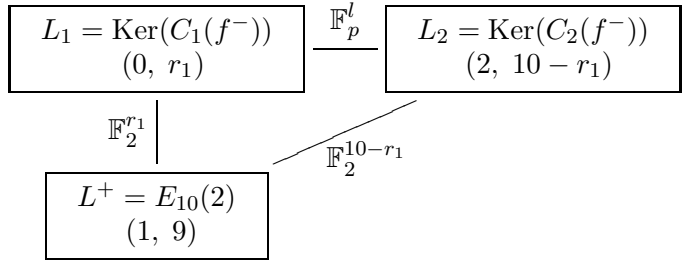
\begin{figure}
\xymatrix{
& & &\fbox{        
  $\begin{array}{l}
 L_1={\rm Ker}(C_1(f^-)) \\  \;\;\;\;\;\;\;\;\;\; (0, \; r_1) \end{array}$
  }  \ar@{-}[r]^{\displaystyle \F_p^l}
&\fbox{        
  $\begin{array}{l}
  L_2={\rm Ker}(C_2(f^-)) \\  \;\;\;\;\; \; (2, \; 10-r_1) \end{array}$
  } \ar@{-}[ld]^{\displaystyle \F_2^{10-r_1}}\\
& & &\fbox{        
  $\begin{array}{l}
  L^+=E_{10}(2) \\  \;\;\;\;\; \; (1, \; 9) \end{array}$
  } \ar@{-}[u]^{\displaystyle \F_2^{r_1}}  }
\caption{ Glue among $L_1,L_2,L^+$ inside $L^-\oplus_{\phi}L^+$ in the strategy of ruling out Salem numbers }\label{fig:Lp12}

\end{figure}

5. For any $L\in \sL_1$, find the conjugacy class $[f]$ of each $f\in {\rm O}(L)$ such that $\chi_{f}(x)=C_1(x)$. Choose one representative $f\in {\rm O}(L)$ for each of such conjugacy classes. We denote by $\sX$ the (necessarily) finite set consisting of all the pairs $(L, f)$ where $L$ runs through $\sL_1$.

6. Show that, for any $g\in \sR\subset {\rm O}(L^+)$, any $(L,f)\in \sX$, and any subgroup $H\subset G(L^+)$, there exists no gluing map $\psi : G(L)_2\longrightarrow H$ such that i) the map $f \oplus g$ extends to $f\oplus_{\psi} g\in {\rm O}(L \oplus_{\psi}L^+)$ and ii) $f\oplus_{\psi} g$ is positive. This contradicts the condition 8) in Definition \ref{def:triple} and therefore $\tau$ is ruled out. \end{strategy}

\begin{remark}\label{rmk:strategy}
1) Unlike the other six cases $\tau_i, i\neq 3 $, our proof of ruling out $\tau_3$ is almost computer free (see proof of Theorem \ref{thm:rulingout1.42}), while it follows Strategy \ref{str:rulingout} partially.

2) For $i=1,2,4,6,7$, we rule out $\tau_i$ exactly following Strategy \ref{str:rulingout} (in each of these five cases, the polynomial $C_1(x)\in \Z[x]$ chosen in the step 3  is irreducible). For $\tau_5$, we also follow Strategy \ref{str:rulingout}, but some differences appear (see the proof of Theorem \ref{thm:rulingout5}). For all $\tau_i$ $(1\le i\le 7, i\neq 3)$, we use  Magma (\cite{BCP}) to find conjugacy classes (and explicit representatives of them) of isometry groups of negative definite lattices in $\sL_1$, and we follow Algorithm \ref{alg:positivity} to check positivity. For computation in these 6 cases, we use a mixture of Mathematica (\cite{Wo}), Magma (\cite{BCP}), PARI/GP (\cite{Th}), and SageMath (\cite{The}).

3) The strategy in \cite[Page 203]{Mc16} may be adapted to find $\sR$ (see Theorem \ref{thm:GluePeriod}). In fact, for each $\tau_i$ ($1\le i\le 7$), we can find a {\it finite} set $\sR$ in this way (the finiteness of $\sR$ is guaranteed by \cite[Corollary 6.3]{Mc16}, see also \cite[Page 298]{BG18}). \end{remark}

Now we start to rule out Salem numbers $\tau_i$ ($1 \le i \le 7$) following Strategy \ref{str:rulingout}. In the rest of this section, we freely use the notation in Strategy \ref{str:rulingout}.

 \begin{theorem}\label{thm:rulingout1.42}
 The Salem number $\tau_3$ cannot be realized by an automorphism of any Enriques surface.
 \end{theorem}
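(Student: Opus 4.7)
The plan is to follow Strategy~\ref{str:rulingout} only partially: instead of running the full positivity enumeration at the end, I want to force the splitting of $L^-$ as $T\oplus T^{\perp}_{L^-} \cong U\oplus U(2)\oplus E_8(2)$ and then invoke Lemma~\ref{lem:E8(2)positivity} directly, which needs no computer algebra. Assume for contradiction that $(f^+,f^-,T,\phi)$ is an Enriques quadruple of entropy $\log\tau_3$. Write $\chi_{f^+}(x)=S_3(x)C(x)$ with $C(x)$ a product of cyclotomic polynomials, and $\chi_{f^-}(x)=C_1(x)C_2(x)$ with $T=\mathrm{Ker}\,C_1(f^-)\subset L^-$ and $T^{\perp}_{L^-}=\mathrm{Ker}\,C_2(f^-)$; here $C_1(x)$ is the minimal polynomial of $f^-|T$ and, by condition 5 of Definition~\ref{def:triple}, is irreducible.

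First I would nail down the possible shapes of $C(x)$ and $C_1(x)$. Since $L^+\cong E_{10}(2)\cong E_{10}$ after a rescaling, Theorem~\ref{thm:GluePeriod} applied to the Salem factor of $f^+$ on $E_{10}$ bounds the feasible primes and the degree of a cyclotomic annihilator on the glue of the Salem factor; this produces a short explicit list of candidates for $C(x)$. For each candidate I combine Lemma~\ref{lem:gL-}, which gives $\chi_{f^-}(x)\equiv(1+x)^2\chi_{f^+}(x)\pmod 2$, with the fact that $f^-$ has finite order (so every irreducible factor of $\chi_{f^-}$ is cyclotomic). Matching cyclotomic polynomials to their mod~$2$ reductions fixes $\chi_{f^-}$ and hence $\deg C_1$.

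Second I would determine $T$ and $T^{\perp}_{L^-}$ as lattices. Because the minimal polynomial of $f^-|T$ is irreducible and $T$ has signature $(2,r)$, Theorem~\ref{thm:L2withroots} and Lemma~\ref{lem:L(1/2)} applied to the splitting $\chi_{f^-}=C_1C_2$ inside $L^-\cong U\oplus E_{10}(2)$ pin down the glue groups $G(T)$ and $G(T^{\perp}_{L^-})$ as $2$-elementary pieces together with, at worst, a single small odd-prime contribution governed by $C_1$. Condition~6 of Definition~\ref{def:triple} (no roots in $T^{\perp}_{L^-}$) then rules out the negative-definite candidates for $T^{\perp}_{L^-}$ coming from the $\tau_3$-shape of $C_1$; for $S_3(x)$ the remaining possibility is the unique lattice $T^{\perp}_{L^-}\cong E_8(2)$ (its rootlessness is classical), which in turn forces $T\cong U\oplus U(2)$ by Nikulin's discriminant-form classification \cite[Theorem 1.10.1]{Ni80}.

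At this point the Enriques quadruple has $N:=T^{\perp}_{L^-}\cong E_8(2)$ an $f^-$-stable primitive sublattice of $L^-$ whose orthogonal complement is $U\oplus U(2)$, while $f^+\in \mathrm{O}^+(L^+)$ has spectral radius $\tau_3>1$. Lemma~\ref{lem:E8(2)positivity} then asserts that no gluing $\phi\colon G(L^-)\to G(L^+)$ can extend $f^-\oplus f^+$ to a positive isometry of $N\oplus_{\phi}L^+$, contradicting condition~8 of Definition~\ref{def:triple} (since the chamber-preservation condition restricted to the hyperbolic sublattice $N\oplus_{\phi}L^+$ is exactly positivity by Corollary~\ref{cor:subpositive}). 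The main obstacle I expect is the second step above: showing that the rootlessness of $T^{\perp}_{L^-}$ combined with the finite-order structure of $f^-$ leaves only the lattice $E_8(2)$. This is where the short feasible-prime list for $S_3(x)$ and Theorem~\ref{thm:L2withroots} combine to give a by-hand enumeration, so the whole argument avoids Magma and proceeds through lattice-theoretic identifications.
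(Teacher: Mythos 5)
Your overall architecture is the paper's: force an $f^-$-stable splitting of $L^-$ into $E_8(2)$ and $U\oplus U(2)$ and then contradict condition 8) of Definition \ref{def:triple} by combining Corollary \ref{cor:subpositive} with Lemma \ref{lem:E8(2)positivity}. Your first step is also fine in substance (the paper gets $\overline{\chi_{f^+}}=\overline{(1+x)^2\Phi_{15}(x)}$ by the quick resultant argument of Lemma \ref{lem:1.42} rather than through Theorem \ref{thm:GluePeriod}, a harmless difference), though note the mod~$2$ matching leaves a $\Phi_{15}$-versus-$\Phi_{30}$ ambiguity in $\chi_{f^-}$ which the paper removes by replacing $f^-$ with $-f^-$ (Remark \ref{rmk:EnriquesK3}).

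The genuine gap is your middle step, exactly where you anticipated trouble. Theorem \ref{thm:L2withroots} cannot be invoked: it is stated only for $\chi_{f_1}(x)=\Phi_{n_1}(x)$ with $n_1\in\{5,7,9\}$, whereas the relevant factor here is $\Phi_{15}$ (or $\Phi_{30}$); moreover condition 6) (rootlessness of $T^{\perp}_{L^-}$) is not what pins down the lattices --- $E_8(2)$ is itself negative definite and rootless, and rootlessness does not by itself exclude other complements (e.g.\ ones of the shape $E_8(2)\oplus\langle -4\rangle$), nor does it force $T$ to be all of $\mathrm{Ker}(C_2(f^-))$. The mechanism the paper actually uses is different and is needed: writing $L_1=\mathrm{Ker}(\Phi_{15}(f^-))$, $L_2=\mathrm{Ker}(C_2(f^-))$ with $C_2$ a product of $\Phi_1,\Phi_2,\Phi_4,\Phi_8$, one has $|\mathrm{res}(\Phi_{15},C_2)|=1$, so $L^-=L_1\oplus L_2$ splits orthogonally; the glue computation via Lemma \ref{lem:sylowp} and condition 7) gives $G(L_1)\cong\F_2^8$, Lemma \ref{lem:L(1/2)} then makes $L_1(1/2)$ an even unimodular lattice of rank $8$, and the classification of even unimodular lattices (no signature $(2,6)$) forces $L_1\cong E_8(2)$, after which Nikulin's classification gives $L_2\cong U\oplus U(2)$. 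Finally, your identification $N:=T^{\perp}_{L^-}\cong E_8(2)$ tacitly assumes $T=L_2$, which the conditions of Definition \ref{def:triple} do not force (condition 5) only yields $T\subset L_2$); the correct fix, as in the paper, is to take $N=L_1$ and apply Lemma \ref{lem:E8(2)positivity} to the primitive closure of $L_1\oplus L^+$, which lies inside $T^{\perp}_{L^-\oplus_{\phi}L^+}$ because $T\subset L_2$, so its positivity follows from condition 8) and Corollary \ref{cor:subpositive}. With these repairs your computer-free argument coincides with the paper's proof.
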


\begin{proof}
 Suppose $(f^+,f^-,T,\phi)$ is an Enriques quadruple of entropy $\log\tau_3$.  For $\tau_3$, it turns out that we can determine $\sF$ without finding $\sR$ explicitly.

\begin{lemma}\label{lem:1.42}
If $g\in \sR$, then $$\overline{\chi_{g}(x)} = \overline{(1 + x)^2(1+x+x^4)(1+x^3+x^4)}.$$
\end{lemma}

\begin{proof}

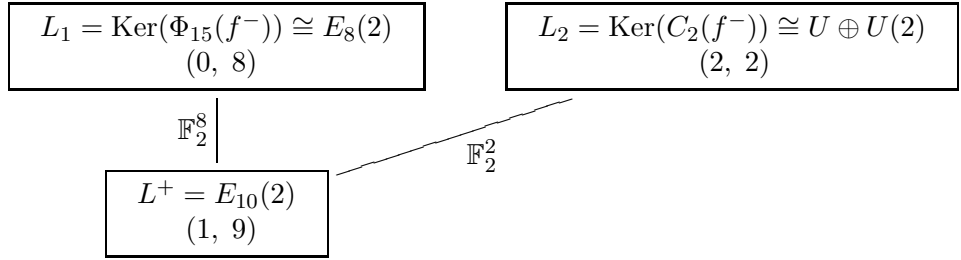
\begin{figure}
\xymatrix{
& &\fbox{        
  $\begin{array}{l}
 L_1={\rm Ker}(\Phi_{15}(f^-))\cong E_8(2) \\  \;\;\;\;\;\;\;\;\;\;\;\;\;\;\; \; \;\;  (0, \; 8) \end{array}$
  }  
&\fbox{        
  $\begin{array}{l}
  L_2={\rm Ker}(C_2(f^-))\cong U\oplus U(2) \\  \;\;\;\;\; \; \;\;\;\;\; \;\;\;\;\;\; \; \;\;   (2, \; 2) \end{array}$
  } \ar@{-}[ld]^{\displaystyle \F_2^{2}}\\
& &\fbox{        
  $\begin{array}{l}
  L^+=E_{10}(2) \\  \;\;\;\;\; \; (1, \; 9) \end{array}$
  } \ar@{-}[u]^{\displaystyle \F_2^{8}}  }
\caption{ Ruling out $\tau_3$ }\label{fig:tau3}
\end{figure}

Since $g\in\sR$, $\chi_g(x)=S_3(x)Q(x)$, where $Q(x)$ is a product of cyclotomic polynomials, and ${\rm deg}(Q(x))=2$. There are exactly five cyclotomic polynomials $\Phi_k(x)$ ($k=1,2,3,4,6$) with degree less than or equal to 2. By computation, $|{\rm res}(\Phi_k(x),S_3(x))|=1$ for $k=3, 6$. Then $Q(x)\neq \Phi_3(x),\Phi_6(x)$ (otherwise, by \cite[Theorem 4.3]{Mc16}, the Salem factor ${\rm Ker}(S_3(g))\subset E_{10}$ must be an even unimodular lattice of signature $(1,7)$, a contradiction). This implies Lemma \ref{lem:1.42}. \end{proof}

We choose $C_1(x)$ to be $\Phi_{15}(x)$. By Lemma \ref{lem:1.42} and 2) of Definition \ref{def:triple}, $$\overline{\chi_{f^-}(x)}=\overline{(1+x)^2\chi_{f^+}(x)}=\overline{(1+x)^4C_1(x)}.$$ Since $f^-$ is of finite order by 3) of Definition \ref{def:triple}, $\chi_{f^-}(x)$ is a product of cyclotomic polynomials of degree $\le12$. Then by factorizations of such cyclotomic polynomials in $\F_2[x]$ (see Table \ref{tab:cycpoly}), either $\Phi_{30}(x)$ or $C_1(x)=\Phi_{15}(x)$ divides $\chi_{f^-}(x)$. Replacing $f^-$ by $-f^-$ if necessary (see Remark \ref{rmk:EnriquesK3}), we may and will assume that $C_1(x)$ divides $\chi_{f^-}(x)$.  Since $\overline{C_1(x)}$ and $\overline{C_2(x)}$ are coprime in $\F_2[x]$ (equivalently, $2\nmid {\rm res}(C_1(x),C_2(x))$), by Lemma \ref{lem:sylowp}, we have $$\chi_{\overline{f^-}}(x)=\chi_{\overline{f_1}|G(L_1)_2}(x)\chi_{\overline{f_2}|G(L_2)_2}(x),$$ $G(L_i)_2\cong \F_2^{k_i}$, where $k_1,k_2\ge 0$ and $k_1+k_2=10$. Since $\phi: G(L^-)\longrightarrow G(L^+)$ is a gluing map, by 7) of Definition \ref{def:triple}, $\chi_{\overline{f^-}}(x)=\chi_{\overline{f^+}}(x)=\overline{\chi_{f^+}(x)}$. Thus, $$\chi_{\overline{f_1}|G(L_1)_2}(x)\chi_{\overline{f_2}|G(L_2)_2}(x) = \overline{(1 + x)^2C_1(x)}.$$ It follows that $$\chi_{\overline{f_1}|G(L_1)_2}(x)=\overline{C_1(x)},\; \chi_{\overline{f_2}|G(L_2)_2}(x)=\overline{(1 + x)^2}.$$ Thus, $k_1=8$ and $k_2=2$. Then, by Lemma \ref{lem:L(1/2)},  $L_1(1/2)$ is a well-defined even lattice. 

By Table \ref{tab:cycpoly}, $C_2(x)$ is a product of polynomials in $\{\Phi_1(x),\Phi_2(x),\Phi_4(x),\Phi_8(x)\}$, which implies $|{\rm res}(C_1(x),C_2(x))|=1.$ Thus, $L^-=L_1\oplus L_2$ by \cite[Proposition 4.2]{Mc16}.  Then $G(L_i)=G(L_i)_2=\F_2^{k_i}$. Thus, $L_1(1/2)$ is unimodular. By 5) of Definition \ref{def:triple}, either $T\subset L_1$ or $T\subset L_2$, and by 4) of Definition \ref{def:triple},  ${\rm sig}(L_1(1/2))={\rm sig}(L_1)$ is either $(2,6)$ or $(0,8)$. Then by classification of even unimodular lattice of rank $8$ (see e.g. \cite[Chapter V]{Se73}), we have ${\rm sig}(L_1(1/2))=(0,8)$ and $L_1(1/2)\cong E_{8}$. Hence $L_1\cong E_{8}(2)$, $T\subset L_2$, and ${\rm sig}(L_2)=(2,2)$. By using \cite[Theorem 4.3.1]{Ni83},  we deduce that $L_2\cong U\oplus U(2)$. See Figure \ref{fig:tau3}.

Let $R\subset L^-\oplus_{\phi} L^+$ be the smallest primitive sublattice containing both $L_1$ and $L^+$. By 8) of Definition \ref{def:triple},  the restriction of $f^-\oplus_{\phi} f^+$ to  $T^{\perp}_{L^-\oplus_{\phi} L^+}$ is positive. Since $R\subset T^{\perp}_{L^-\oplus_{\phi} L^+}$, by Corollary \ref{cor:subpositive}, $(f^-\oplus_{\phi} f^+)| R$ is positive. This contradicts Lemma \ref{lem:E8(2)positivity}. \end{proof}

\begin{theorem}\label{thm:rulingout12467}

None of the Salem numbers $\tau_i$ ($i=1, 2,4,6,7$) can be realized by an automorphism of an Enriques surface.
\end{theorem}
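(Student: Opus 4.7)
The plan is to follow Strategy \ref{str:rulingout} uniformly for each $i \in \{1,2,4,6,7\}$ and derive a contradiction from the assumption that an Enriques quadruple $(f^+,f^-,T,\phi)$ of entropy $\log\tau_i$ exists. The first step is to produce the finite set $\sR \subset {\rm O}(E_{10})$ of isometries whose characteristic polynomial is divisible by $S_i(x)$, up to ${\rm O}(E_{10})$-conjugacy. For this I would use Theorem \ref{thm:GluePeriod}: list the feasible primes via \eqref{eq:feasible}, enumerate the admissible cyclotomic factors $C(x)$ with $\deg C = 10-\deg S_i$ such that $D({\rm Ord}(\overline{f}))\le 10-\deg S_i$, and for each admissible $\chi_f(x)=S_i(x)C(x)$ enumerate the possible Salem factors (twists of the principal $S_i(x)$-lattice, finite by pseudo-simplicity and Theorem \ref{thm:twist}) and the corresponding cyclotomic factors together with their glue maps into $E_{10}$. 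Reading off $\overline{\chi_g(x)}$ for $g\in\sR$ yields the finite set $\sF \subset \F_2[x]$, which by 7) of Definition \ref{def:triple} constrains $\overline{\chi_{f^+}(x)}$.

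Next, combined with 2) of Definition \ref{def:triple}, the set $\sF$ constrains $\overline{\chi_{f^-}(x)}$. Since $f^-$ has finite order, $\chi_{f^-}(x)$ is a product of cyclotomic polynomials of degree at most $12$, and factoring these modulo $2$ (cf.\ Table \ref{tab:cycpoly}) leaves only a short list of possibilities. From this list I would extract, in each case, an irreducible factor $C_1(x) \mid \chi_{f^-}(x)$ with $\overline{1+x} \nmid \overline{C_1(x)}$ and $\gcd(\overline{C_1(x)}, \overline{C_2(x)}) = 1$ in $\F_2[x]$ (swapping $f^-$ with $-f^-$ via Remark \ref{rmk:EnriquesK3} if needed). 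Setting $L_i := \ker C_i(f^-)$ and $f_i := f^-|L_i$, Lemma \ref{lem:sylowp} forces $G(L_i)_2 \cong \F_2^{k_i}$ with $k_1+k_2=10$ and $\chi_{\overline{f_1}|G(L_1)_2}(x) = \overline{C_1(x)}$. Because $C_1$ is irreducible and $\deg C_1 = k_1$, the Sylow $2$-part has maximal length, so Lemma \ref{lem:L(1/2)} applies and $L_1(1/2)$ is a well-defined even lattice.

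Now I would invoke Theorem \ref{thm:L2withroots} (or the odd-prime analogue inside its proof) to conclude that $L_1(1/2)$ is an even negative definite $p$-elementary lattice for the unique prime $p$ dividing $|\det L_1|/2^{k_1}$, and to bound its determinant using the resultant $|{\rm res}(C_1(x),C_2(x))|$ and the genus computation $|\det L^-|=2^{12}$. From the signature bound in 4) of Definition \ref{def:triple} (which forces $T \subset L_2$ once $L_1$ is negative definite) and the standard classification of small-rank $p$-elementary negative definite lattices (Magma; \cite{RS89}, \cite{CS99}), the set $\sL_1$ of candidates for $L_1$ is finite and small. For each $L \in \sL_1$ I enumerate the ${\rm O}(L)$-conjugacy classes of isometries with characteristic polynomial $C_1(x)$, obtaining the finite set $\sX$.

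Finally, for every pair $(L,f) \in \sX$ and every $g \in \sR$ I would enumerate all gluing maps $\psi:G(L)_2 \to H \subset G(L^+)$ commuting with $f\oplus g$ that produce a lattice of the correct signature, and for each such glue apply Algorithm \ref{alg:positivity} to check positivity of $f\oplus_\psi g$ restricted to the Enriques-relevant sublattice; in parallel I would verify that $L_2$, determined up to genus by its signature and discriminant form, has the no-roots property required in 6) and 8) of Definition \ref{def:triple}. The main obstacle will be the last step: the combinatorial explosion of gluings $\psi$ and the finiteness (and thus computability) of the positivity test via Theorem \ref{thm:positivity} must be handled case by case, and in each of the five cases a computer-algebra verification that every candidate pair fails either positivity or the root-freeness condition closes the contradiction. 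For $\tau_5$ the irreducible $C_1$ chosen from the list is not unique, so minor bookkeeping differences appear, but the structure of the argument is identical.
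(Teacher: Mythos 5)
Your proposal is correct and takes essentially the same route as the paper: it is an unpacking of Strategy \ref{str:rulingout}, with $\sR$ obtained via Theorem \ref{thm:GluePeriod}/Theorem \ref{thm:twist}, the reduction to an even negative definite $p$-elementary lattice $L_1(1/2)$ via Lemmas \ref{lem:sylowp}, \ref{lem:L(1/2)} and Theorem \ref{thm:L2withroots} combined with condition 6) of Definition \ref{def:triple}, and the final contradiction by enumerating glues and running the positivity test of Algorithm \ref{alg:positivity} by computer --- exactly as the paper does, where the choices turn out to be $C_1(x)=\Phi_7,\Phi_5,\Phi_9,\Phi_9,\Phi_7$ and $\sL_1=\{A_6(2)\},\{A_4(2)\},\{E_6(2)\},\{E_6(2)\},\{A_6(2)\}$ for $i=1,2,4,6,7$. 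Only note that $\tau_5$ is not covered by this statement (it is treated in a separate theorem, where $C_1(x)$ need not be irreducible), so your closing remark about it is out of scope but harmless.
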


\begin{proof}
 First, we rule out $\tau_1$ by following Strategy \ref{str:rulingout} and by using computer algebra (see tau1.txt in enriques.zip available at \cite{Yu} for computational data). We set $K:=\Q[x]/(S_1(x))$ and $k:=\Q[x+x^{-1}]\subset K$ (See section 3). Let $(L_0,f_0)$ denote the principal $S_1(x)$-lattice.  Let $\sU\subset \sO_k^{\times}$ denote a set of representatives for the units modulo squares. There are exactly four units $u_i\in \sU$, $i=1,2,3,4$ such that the twists $L_0(u_i)$ are isomorphic to $E_{10}$. Since $\tau_1$ is simple, by Theorem \ref{thm:twist}, up to conjugation, there are at most four isometries, say $g_i\in {\rm O}(L^+)$, $i=1,2,3,4$ with characteristic polynomial $S_1(x)$.  We set $\sR:=\{g_1,g_2,g_3,g_4\}$. Note that $\sF=\{\overline{(1+x+x^2)^2\Phi_7(x)}\}$.

We choose $C_1(x)$ to be $\Phi_7(x)$. By the same argument as in the proof of Theorem \ref{thm:rulingout1.42},   we deduce that $G(L_1)_2\cong \F_2^6$, $G(L_2)_2\cong \F_2^4$, and $L_1(1/2)$ is a well-defined even lattice.

Then by Theorem \ref{thm:L2withroots} ($n_1=7$), it follows that $G(L_1)=\F_2^6\oplus \F_7,\,\,G(L_2)=\F_2^{4}\oplus \F_7$. Moreover, if ${\rm sig}(L_1)=(2,4)$, then $L_2$ has roots. Then, by 6) of Definition \ref{def:triple}, ${\rm sig}(L_1)=(0,6)$. Thus, $L_1(1/2)$ is an even negative definite lattice of determinant $7$ and rank $6$. By classification, $L_1(1/2)\cong A_6$ (see \cite[Table 1]{CS88} ). Thus, we obtain $\sL_1=\{A_6(2)\}$.  Up to conjugation in ${\rm O}(A_6(2))$, there is a unique isometry $f\in {\rm O}(A_6(2))$ with characteristic polynomial $C_1(x)$. Thus, $\sX=\{(A_6(2), f)\}$.

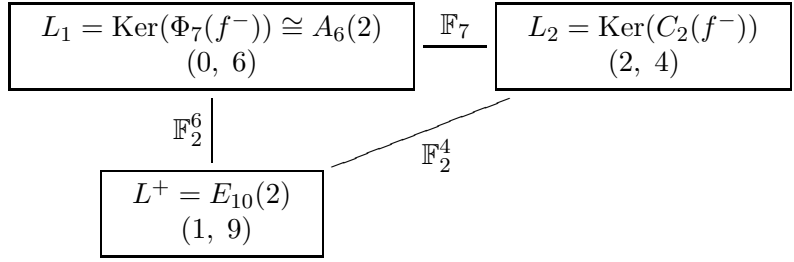
\begin{figure}
\xymatrix{
& & &\fbox{        
  $\begin{array}{l}
 L_1={\rm Ker}(\Phi_7(f^-))\cong A_6(2) \\  \;\;\;\;\;\;\;\;\;\;\;\;\;\;\; \;\;\;  (0, \; 6) \end{array}$
  }  \ar@{-}[r]^{\;\;\;\;\;\;\;\;\;\displaystyle \F_7}
&\fbox{        
  $\begin{array}{l}
  L_2={\rm Ker}(C_2(f^-)) \\ \;\;\;\;\; \;  \;\;\;\;(2, \; 4) \end{array}$
  } \ar@{-}[ld]^{\displaystyle \F_2^{4}}\\
& & &\fbox{        
  $\begin{array}{l}
  L^+=E_{10}(2) \\  \;\;\;\;\; \; (1, \; 9) \end{array}$
  } \ar@{-}[u]^{\displaystyle \F_2^{6}}  }
\caption{ Ruling out $\tau_1$ }\label{fig:tau1}
\end{figure}

Using computer algebra (see Remark \ref{rmk:strategy} 2)), for each $g\in \sR$, there are exactly seven pairs $(H, \psi)$, where $H\subset G(L^+)$ is a subgroup of order $2^6$ and $\psi: G(A_6(2))_2\longrightarrow H$ is a gluing map, such that  the map $f\oplus g$ extends to $f\oplus_{\psi} g\in {\rm O}(A_6(2)\oplus_{\psi}L^+)$. Thus, totally, there are 28 candidates $f\oplus_{\psi} g$, but it turns out that none of them is positive by positivity test (see Algorithm \ref{alg:positivity}), a contradiction to  $(f^-\oplus_{\phi} f^+)| T_{L^-\oplus_{\phi} L^+}^\perp$ being positive. This completes the proof of ruling out $\tau_1$.

Similar to ruling out $\tau_1$, we follow Strategy \ref{str:rulingout} to rule out the four Salem numbers $\tau_{i}$ ($i=2,4,6,7$). In the step 3 of Strategy \ref{str:rulingout},  we choose $\Phi_5(x), \Phi_9(x), \Phi_9(x), \Phi_7(x)$ for $C_1(x)$ respectively for $\tau_{i}$ ($i=2,4,6,7$). It turns out that $\sL_1$ for $\tau_{i}$ ($i=2,4,6,7$) are $\{ A_4(2)\}$,  $\{ E_6(2)\}$, $\{ E_6(2)\}$, $\{ A_6(2)\}$ respectively. We omit the details of the proof for these $4$ cases (see \cite[Section 9]{OY18} for more details, and see the four text files tau2.txt, tau4.txt, tau6.txt, tau7.txt in enriques.zip available at \cite{Yu} for computational data). \end{proof}

Now the remaining case is $\tau_5$. This case is the most complicated case.  The main reason is that the mod $2$ reduction of $S_5(x),$  $$\overline{S_5(x)}=\overline{(1+x+x^2)^3},$$ has only one irreducible factor of small degree, so that there are more candidates for cyclotomic factors to be checked.

\begin{theorem}\label{thm:rulingout5}

The Salem number $\tau_5$ cannot be realized by an automorphism of any Enriques surface.
\end{theorem}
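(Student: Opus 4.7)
The plan is to follow Strategy~\ref{str:rulingout}, but with the adaptations alluded to in the paper, coming from the fact that $\overline{S_5(x)}=\overline{(1+x+x^2)^3}$ is a single irreducible power mod~$2$. Assume for contradiction that $(f^+,f^-,T,\phi)$ is an Enriques quadruple of entropy $\log\tau_5$. First, since $\tau_5$ is pseudo-simple (Remark~\ref{rmk:psimple}) and $\deg S_5=6$, Theorem~\ref{thm:twist} together with the feasible-prime and $D(n)$-bounds of Theorem~\ref{thm:GluePeriod} produces a finite list $\sR\subset {\rm O}(E_{10})$ of conjugacy class representatives $g$ with $S_5(x)\mid \chi_g(x)$; here the cofactor of $S_5(x)$ in $\chi_g(x)$ has degree $4$ and is a product of cyclotomic polynomials chosen from $\{\Phi_1,\Phi_2,\Phi_3,\Phi_4,\Phi_5,\Phi_6,\Phi_{10},\Phi_{12}\}$. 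Reducing mod~$2$ gives the set $\sF$, whose elements are products of $(1+x)^a$, $(1+x+x^2)^b$, and possibly an irreducible quartic coming from $\overline{\Phi_5}=\overline{\Phi_{10}}$. Each $g\in\sR$ then forces, via Definition~\ref{def:triple}(2) and Lemma~\ref{lem:chiL-}, the reduction $\overline{\chi_{f^-}(x)}\equiv (1+x)^2\overline{\chi_g(x)}\pmod 2$, constraining the possible $\chi_{f^-}(x)$ to a finite list of products of cyclotomic polynomials.

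Next, I would select a cyclotomic polynomial $C_1(x)\mid \chi_{f^-}(x)$ whose mod~$2$ reduction is coprime to that of $C_2(x):=\chi_{f^-}(x)/C_1(x)$ and satisfies $\overline{1+x}\nmid \overline{C_1(x)}$. The natural candidates are those $\Phi_n(x)$ whose mod~$2$ reduction is coprime to both $(1+x)$ and $(1+x+x^2)$, i.e. $\Phi_5, \Phi_7, \Phi_9, \Phi_{10}, \Phi_{11},\dots$, and because the $S_5$-side mod~$2$ contributes only the factor $(1+x+x^2)$, potentially several of these can divide $\chi_{f^-}(x)$ and must each be tested. For each admissible $(g,C_1)$ one then applies Lemmas~\ref{lem:sylowp}--\ref{lem:Horderp} and Theorem~\ref{thm:L2withroots} (possibly replacing $f^-$ by $-f^-$ as in Remark~\ref{rmk:EnriquesK3}) to conclude, as in the proof of Theorems~\ref{thm:rulingout1.42} and~\ref{thm:rulingout12467}, that $L_1(1/2)$ is a well-defined even negative-definite $p$-elementary lattice (with $p$ the unique prime dividing the order of $f_1$), giving a small finite family $\sL_1$; classifying the corresponding conjugacy classes in each ${\rm O}(L)$ with $L\in\sL_1$ yields the set $\sX$.

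Finally, for each $(L,f)\in\sX$ and each $g\in\sR$, I would enumerate the gluing maps $\psi:G(L)_2\to H\subset G(L^+)$ such that $f\oplus g$ extends to an isometry of $L\oplus_\psi L^+$ with compatible discriminant form, then apply Algorithm~\ref{alg:positivity} to verify that none of them are positive; by Corollary~\ref{cor:subpositive} this contradicts Definition~\ref{def:triple}(8). The main obstacle I anticipate, and the reason this case is the most complicated, is the proliferation of branches: because $\overline{S_5}$ has only one distinct irreducible factor, the mod~$2$ analysis is less discriminating than for $\tau_1,\dots,\tau_4,\tau_6,\tau_7$, so more choices of $C_1(x)$ remain viable, and for some of them the resulting $\sL_1$ may contain lattices close to being unimodular after the $1/2$-rescaling (as happened in the $\tau_3$ case, where $L_1(1/2)\cong E_8$). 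For such borderline configurations I would lean on Lemma~\ref{lem:E8(2)positivity}--style rigidity to abort positivity directly from the transcendental-factor side, rather than on a brute-force positivity search, in order to keep the computation bounded.
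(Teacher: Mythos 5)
Your overall frame (reduce to nonexistence of an Enriques quadruple, build a finite set $\sR$, split off a factor $C_1(x)$ of $\chi_{f^-}$, classify $\sL_1$, then glue and test positivity) matches the paper, but the proposal goes wrong at the decisive step: the choice of $C_1(x)$. For $\tau_5$ one has $\overline{S_5(x)}=\overline{(1+x+x^2)^3}$, and the paper computes $\sF=\{\overline{(1+x)^4\Phi_3^3},\ \overline{\Phi_3^3\Phi_5},\ \overline{(1+x)^2\Phi_3^4}\}$; hence $\overline{\chi_{f^-}(x)}=(1+x)^2\overline{\chi_{f^+}(x)}$ has, in two of the three branches, \emph{no} irreducible factor mod $2$ other than $1+x$ and $1+x+x^2$. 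Your proposed candidates $\Phi_5,\Phi_7,\Phi_9,\Phi_{10},\Phi_{11},\dots$ therefore mostly cannot divide $\chi_{f^-}$ at all ($\overline{\Phi_7},\overline{\Phi_9},\overline{\Phi_{11}}$ are not among the available mod-$2$ factors), and in the main branches there is no admissible $C_1$ on your list. The paper instead takes $\overline{C_1(x)}=\overline{\Phi_3(x)^3}$ (case $\sR_3$) or $\overline{\Phi_3(x)^4}$ (case $\sR_4$), i.e.\ $C_1$ is a \emph{non-irreducible} product of polynomials from $\{\Phi_3,\Phi_6,\Phi_{12}\}$ resp.\ $\{\Phi_3,\Phi_6,\Phi_{12},\Phi_{24}\}$; consequently Theorem \ref{thm:L2withroots} does not apply as stated (its hypothesis is $\chi_{f_1}=\Phi_{n_1}$ with $n_1\in\{5,7,9\}$) and a variant of it is needed to get the $3$-elementary lists $\sL_1=\{E_6(2),A_2(2)^{\oplus 3}\}$ and $\{E_8(2),\,E_6(2)\oplus A_2(2),\,A_2(2)^{\oplus 4},\,M'(2)\}$. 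Your plan, as written, gives no route through these branches.

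The second gap is your assumption that the contradiction will always come from a failed positivity test in step 6 (or from Lemma \ref{lem:E8(2)positivity}-type rigidity tied to $L_1(1/2)$ being unimodular). In the case $f^+\in\sR_4$ with $L_1\cong A_2(2)^{\oplus 4}$, gluing maps $\psi$ satisfying \emph{both} extension and positivity do exist, so neither Algorithm \ref{alg:positivity} nor an $E_8(2)$/$U\oplus U(2)$ rigidity argument disposes of it. The paper closes this branch by a different mechanism: Lemma \ref{lem:L(1/2)} applied at $p=3$ forces $L_2(1/3)\cong U\oplus U(2)$, hence $L_2\cong U(3)\oplus U(6)$, and one then checks that no isometry of $U(3)\oplus U(6)$ can be glued to any of the surviving $f\oplus_{\psi}g$ so as to complete an Enriques quadruple. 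Without anticipating this sub-case and supplying such an argument on the transcendental side $L_2$, the proof does not close; this, together with the $C_1$ issue above, is exactly why $\tau_5$ is the hardest case rather than a routine repetition of the $\tau_1$-type argument.
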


\begin{proof}
 Again we follow Strategy \ref{str:rulingout}, although the way to use it will be slightly different from the previous cases. We only sketch the proof below (see \cite[Section 9]{OY18} for the details, and see tau5.txt in enriques.zip available at \cite{Yu} for computational data) and point out the main differences which appear. 

The feasible primes for $\tau_5$ are $2$ and $5$, and $|{\rm det}(L_0)|=5$ where $(L_0,f_0)$ is the principal $S_5(x)$-lattice. By Theorem \ref{thm:GluePeriod},  a necessary condition for a twist being a Salem factor of an isometry of $E_{10}$ is $|{\rm det}(L_0(a))|\in\{5, 20\}$. Then we obtain $\sR$ via gluing suitable twists $(L_0(a),f_0)$ to  isometries of rank $4$ even negative definite lattices of determinant $5$ or $20$ (cf. \cite{CS88}, \cite{Nip91}).  It turns out $$\sF=\{\overline{(1+x)^4\Phi_3(x)^3}, \overline{\Phi_3(x)^3\Phi_5(x)}, \overline{(1+x)^2\Phi_3(x)^4}\}.$$ In order to apply the four steps 3-6 of Strategy \ref{str:rulingout} effectively, we need to divide $\sR$ into two subsets (which is different from the previous cases). Let $$\sR_3:=\{g\in \sR | \overline{\chi_g(x)}= \overline{(1+x)^4\Phi_3(x)^3}\;\text{or}\;\,  \overline{\Phi_5(x)\Phi_3(x)^3}\},$$ $$\sR_4:=\{g\in \sR |\overline{ \chi_g(x)}=\overline{ (1+x)^2\Phi_3(x)^4} \}.$$ We consider two cases separately:  Case $f^+\in \sR_3$ and Case $f^+\in \sR_4$. 

Case $f^+\in \sR_3$ :  We choose $C_1(x)$ to be any product of cyclotomic polynomials in $\{\Phi_3(x), \Phi_6(x),\Phi_{12}(x)\}$ such that $\overline{C_1(x)}=\overline{(1+x+x^2)^3}$ (here $C_1(x)$ is not necessarily an irreducible polynomial). It turns out that $\sL_1=\{E_6(2), A_2(2)^{\oplus 3}\}$ (here we need a variant of Theorem \ref{thm:L2withroots} to conclude) and  no gluing map $\psi$ with required properties in the step 6 of Strategy \ref{str:rulingout} exists. Thus, the case $f^+\in \sR_3$ is impossible.

\begin{figure}
\xymatrix{
& & &\fbox{        
  $\begin{array}{l}
 L_1\in\{E_6(2), A_2(2)^{\oplus 3}\} \\  \;\;\;\;\;\;\;\;\;\;\;\;\;\;\;  (0, \; 6) \end{array}$
  }  \ar@{-}[r]^{\;\;\;\;\;\;\;\displaystyle \F_3^l}
&\fbox{        
  $\begin{array}{l}
  L_2={\rm Ker}(C_2(f^-)) \\ \;\;\;\;\; \;  \;\;\;\;(2, \; 4) \end{array}$
  } \ar@{-}[ld]^{\displaystyle \F_2^{4}}\\
& & &\fbox{        
  $\begin{array}{l}
  L^+=E_{10}(2) \\  \;\;\;\;\; \;\; (1, \; 9) \end{array}$
  } \ar@{-}[u]^{\displaystyle \F_2^{6}}  }
\caption{ Ruling out $\tau_5$: case $f^+\in \sR_3$, $\overline{C_1(x)}=\overline{\Phi_3(x)^3}$ }\label{fig:tau5R3}
\end{figure}
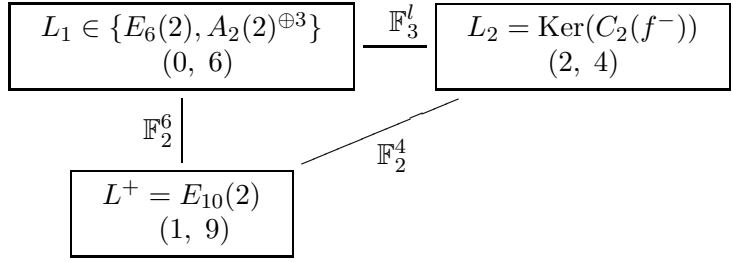

Case $f^+\in\sR_4$ : In this case,  we choose $C_1(x)$ to be any product of cyclotomic polynomials in $\{\Phi_3(x), \Phi_6(x),\Phi_{12}(x),\Phi_{24}(x)\}$ such that $\overline{C_1(x)}=\overline{(1+x+x^2)^4}$. As in the previous case, it turns out that  $$\sL_1=\{E_8(2), E_6(2)\oplus A_2(2), A_2(2)^{\oplus 4}, M^\prime(2)\},$$ where $M^\prime(2)$ is the unique (up to isomorphism) rank $8$ even negative definite lattice of the same discriminant form as $A_2(2)^{\oplus 4}$ but not isometric to $A_2(2)^{\oplus 4}$.

 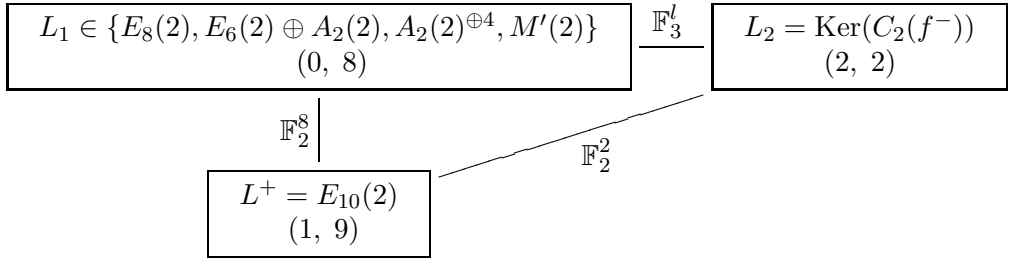
\begin{figure}
\xymatrix{
&\fbox{        
  $\begin{array}{l}
 L_1\in\{E_8(2), E_6(2)\oplus A_2(2), A_2(2)^{\oplus 4}, M^\prime(2)\} \\  \;\;\;\;\;\;\;\;\;\;\; \;\;\;\;\;\;\;\;\;\;\;\;\;\;\;\;\;\; \;\;\;  (0, \; 8) \end{array}$
  }  \ar@{-}[r]^{\;\;\;\;\;\;\;\;\;\;\;\;\;\;\;\;\;\;\;\;\;\;\;\;\;\displaystyle \F_3^l}
&\fbox{        
  $\begin{array}{l}
  L_2={\rm Ker}(C_2(f^-)) \\ \;\;\;\;\; \;  \;\;\;\;(2, \; 2) \end{array}$
  } \ar@{-}[ld]^{\displaystyle \F_2^{2}}\\
&\fbox{        
  $\begin{array}{l}
  L^+=E_{10}(2) \\  \;\;\;\;\; \; (1, \; 9) \end{array}$
  } \ar@{-}[u]^{ \;\;\;\;\;\;\;\;\;\;\;\;\;\;\; \;\;\;\displaystyle \F_2^{8}}  }
\caption{ Ruling out $\tau_5$: case $f^+\in \sR_4$,  $\overline{C_1(x)}=\overline{\Phi_3(x)^4}$ }\label{fig:tau5R4}
\end{figure}

For $L_1\in \sL_1\setminus \{A_2(2)^{\oplus 4}\}$, it turns out that no gluing map $\psi$ with required properties in the step 6 of Strategy \ref{str:rulingout} exists. However, if $L_1\cong A_2(2)^{\oplus 4}$, then there exist $\psi$ satisfying both i) and ii) in the step 6 of Strategy \ref{str:rulingout} (this is a new difference). On the other hand, if $L_1\cong A_2(2)^{\oplus 4}$, then by using Lemma \ref{lem:L(1/2)}, we deduce that $L_2(1/3)\cong U\oplus U(2)$ and $L_2\cong U(3)\oplus U(6)$. However, it turns out that there exists no isometry of $U(3)\oplus U(6)$ which is glued to some $f\oplus_{\psi} g\in {\rm O}(L_1 \oplus_{\psi}L^+)$ to give an Enriques quadruple. Therefore, the case $f^+\in \sR_4$ is impossible. This completes the proof.\end{proof}

\appendix

\section{Tables}

    \begin{table}[htp]
\caption{The eight candidate small Salem numbers $\tau_i$}\label{tab:8candidates}
\begin{center}
{\footnotesize

\begin{tabular}{|c| p{1.5cm}|p{5cm}|p{5.5cm}|}
\hline 
  & value &$S_i(x):=$ minimal polynomial of $\tau_i$ &factorization of $S_i(x)$ in $\F_2[x]$\\
\hline 
 $\tau_1$ & $1.35098...$&$1-x-x^4+x^5-x^6-x^9+x^{10}$&$(1+x+x^2)^2(1+x+x^3)(1+x^2+x^3)$\\
\hline 
 $\tau_2$& $1.40126...$ &$1-x^2-x^3-x^4+x^6$& $(1+x+x^2)(1+x+x^2+x^3+x^4)$\\
\hline 
 $\tau_3$ & $1.42500... $ &$1-x-x^3+x^4-x^5-x^7+x^8$&$(1+x+x^4)(1+x^3+x^4)$\\
\hline 
 $\tau_4$ & $1.45798...$ &$1-x^2-x^3-x^5-x^6+x^8$&$(1+x)^2(1+x^3+x^6)$\\
\hline 
 $\tau_5$ & $1.50613...$  &$1-x-x^3-x^5+x^6$&$(1+x+x^2)^3$\\
\hline 
 $\tau_6$ & $1.53292...$  &$1-x-x^2+x^5-x^8-x^9+x^{10}$&$(1+x+x^2+x^3+x^4)(1+x^3+x^6)$\\
\hline 
$\tau_7$& $1.55603...$  &$1-x-x^2+x^3-x^4-x^5+x^6$&$(1+x+x^3)(1+x^2+x^3)$\\
\hline 
$\tau_8$& $1.58234...$  &$1-x^2-2x^3-x^4+x^6$&$(1+x)^6$\\
\hline 

\end{tabular}
}
\end{center}

\end{table}

    \begin{table}[htp]
\caption{Cyclotomic polynomials of degree $\le 12$ and their reductions mod $2$}\label{tab:cycpoly}
\begin{center}
{\footnotesize

\begin{tabular}{|c| p{6cm}|p{6cm}|}
\hline 
 $i$ &$ \Phi_i(x)$  &factorization of $\Phi_i(x)$ in $\F_2[x]$\\
\hline 
 $1$ & $-1+x$&$1+x$\\
\hline 
 $2$& $1+x$ &$1+x$\\
\hline 
 $3$ & $1+x+x^2 $ &$1+x+x^2$\\
\hline 
 $4$ & $1+x^2$ &$(1+x)^2$\\
\hline 
 $5$ & $1+x+x^2+x^3+x^4$  &$1+x+x^2+x^3+x^4$\\
\hline 
 $6$ & $1-x+x^2$  &$1+x+x^2$\\
\hline 
$7$& $1+x+x^2+x^3+x^4+x^5+x^6$  &$(1+x+x^3) (1+x^2+x^3)$\\
\hline 
 $8$& $1+x^4$ &$(1+x)^4$\\
\hline 
$9$& $1+x^3+x^6$ &$1+x^3+x^6$\\
\hline 
$10$& $1-x+x^2-x^3+x^4$  &$1+x+x^2+x^3+x^4$\\
\hline 
 $11$ & $1+x+x^2+x^3+x^4+x^5+x^6+x^7+x^8+x^9+x^{10}$&$1+x+x^2+x^3+x^4+x^5+x^6+x^7+x^8+x^9+x^{10}$\\
\hline 
 $12$& $1-x^2+x^4$ &$(1+x+x^2)^2$\\
\hline 
 $13$ & $1+x+x^2+x^3+x^4+x^5+x^6+x^7+x^8+x^9+x^{10}+x^{11}+x^{12} $ &$1+x+x^2+x^3+x^4+x^5+x^6+x^7+x^8+x^9+x^{10}+x^{11}+x^{12}$\\
\hline 
 $14$ & $1-x+x^2-x^3+x^4-x^5+x^6$ &$(1+x+x^3) (1+x^2+x^3)$\\
\hline 
 $15$ & $1-x+x^3-x^4+x^5-x^7+x^8$  &$(1+x+x^4) (1+x^3+x^4)$\\
\hline 
 $16$ & $1+x^8$  &$(1+x)^8$\\
\hline 
$18$& $1-x^3+x^6$  &$1+x^3+x^6$\\
\hline 
 $20$& $1-x^2+x^4-x^6+x^8$ &$(1+x+x^2+x^3+x^4)^2$\\
\hline 
$21$& $1-x+x^3-x^4+x^6-x^8+x^9-x^{11}+x^{12}$ &$(1+x+x^2+x^4+x^6) (1+x^2+x^4+x^5+x^6)$\\
\hline 
$22$& $1-x+x^2-x^3+x^4-x^5+x^6-x^7+x^8-x^9+x^{10}$ &$1+x+x^2+x^3+x^4+x^5+x^6+x^7+x^8+x^9+x^{10}$\\
\hline 
 $24$ & $1-x^4+x^8$ &$(1+x+x^2)^4$\\
\hline 
 $26$ & $1-x+x^2-x^3+x^4-x^5+x^6-x^7+x^8-x^9+x^{10}-x^{11}+x^{12}$ &$1+x+x^2+x^3+x^4+x^5+x^6+x^7+x^8+x^9+x^{10}+x^{11}+x^{12}$\\
\hline 
 $28$ & $1-x^2+x^4-x^6+x^8-x^{10}+x^{12}$  &$(1+x+x^3)^2 (1+x^2+x^3)^2$\\
\hline 
 $30$ & $1+x-x^3-x^4-x^5+x^7+x^8$  &$(1+x+x^4) (1+x^3+x^4)$\\
\hline 
$36$& $1-x^6+x^{12}$  &$(1+x^3+x^6)^2$\\
\hline 
 $42$& $1+x-x^3-x^4+x^6-x^8-x^9+x^{11}+x^{12}$ &$(1+x+x^2+x^4+x^6) (1+x^2+x^4+x^5+x^6)$\\
\hline

\end{tabular}
}
\end{center}

\end{table}

    \begin{table}[htp]
\caption{The  minimum Salem number $\lambda_{2d}$ in degree $2d$}\label{tab:smallest}
\begin{center}
{\footnotesize

\begin{tabular}{|c| p{1.5cm}|p{5cm}|p{5.5cm}|}
\hline 
  & value & minimal polynomial of $\lambda_{2d}$\\
\hline 
 $\lambda_{10}$ & $1.17628...$&$1+x-x^3-x^4-x^5-x^6-x^7+x^9+x^{10}$\\
\hline 
 $\lambda_8$& $1.28063...$ &$1-x^3-x^4-x^5+x^8$\\
\hline 
 $\lambda_6$ & $1.40126... $ &$1-x^2-x^3-x^4+x^6$\\
\hline 
 $\lambda_4$ & $1.72208...$ &$1-x-x^2-x^3+x^4$\\
\hline 
 $\lambda_2$ & $2.61803...$  &$1-3x+x^2$\\
\hline 

\end{tabular}
}
\end{center}

\end{table}

\section{Proof of Lemma \ref{lem:P}}

In this Appendix, we prove Lemma \ref{lem:P} for the sake of completeness. 

By the assumption on $f$, the characteristic polynomial $\chi_{f}(x)=\Phi_k^n(x)$, where $k={\rm Ord}(f)$. The lemma is true when $k=1,2$ (see \cite[Lemma 2.13]{Og02}). In fact, we may choose a $\Q$-basis $e_1,e_2,...,e_{2+r}$ of $T_{\Q}$ such that $(e_i,e_i)>0$ for $i=1,2$, $(e_i,e_i)<0$ for $i=3,...,2+r$, and $(e_i,e_j)=0$ for $i\neq j$. Let $t\in \R$ be a transcendental number such that $t>1$. Let $N>r$ be a sufficiently large integer such that $$v_1:=t^Ne_1+te_3+...+t^re_{2+r}$$ satisfying $v_1^2>0$. Let $P\subset T_{\R}$ be the plane generated by $v_1, e_2$. Then $P$ is of signature $(2,0)$ and $P^{\perp}_{T_{\R}}\cap T=0$.

From now on, we assume $k>2$. 

Suppose $n=1$. Then $\chi_{f}(x)=\Phi_k(x)$, and $\Z[x]/(\Phi_k(x))$ is a PID (here we use the fact ${\rm deg}(\Phi_k(x))=2+r\le 12$).  $T$ is a free $\Z[x]/(\Phi_k(x))$-module of rank 1. Note that ${\rm rk}(T)=2+r=2m$ for some positive integer $m$. Thus, the roots of $\Phi_k(x)$ are of the form $\xi_1,\bar{\xi_1},..., \xi_m,\bar{\xi_m}$. Then we have the following decomposition 
\begin{equation}\label{equ:dec}
T_{\C}=V(\xi_1)\oplus V(\bar{\xi_1})\oplus\cdots \oplus V(\xi_m)\oplus V(\bar{\xi_m})  =(V(\xi_1)\oplus V(\bar{\xi_1}))\perp\cdots \perp (V(\xi_m)\oplus V(\bar{\xi_m})),
\end{equation}
 where $V(\xi_i)$ (resp. $V(\bar{\xi_i})$ )denotes the one-dimensional eigenspace of $f| T$ with respect to $\xi_i$ (resp. $\bar{\xi_i}$). For any $i$, we choose a nonzero $v_i\in V(\xi_i)$. Then $\overline{v_i}\in V(\overline{\xi_i})$. We write $v_i=x_i+\sqrt{-1} y_i$, where $x_i,y_i\in T_{\R}$. By $$(v_i,v_i)=(f(v_i),f(v_i))=(\xi_i v_i,\xi_i v_i)=\xi_i^2(v_i,v_i)$$ and $\xi_i^2\neq 1$, we have $(v_i,v_i)=0$. Similarly, $(\overline{v_i},\overline{v_i})=0$. Thus, $(x_i,x_i)=(y_i,y_i)$, and $(x_i,y_i)=0$. Let $a_i:=(x_i,x_i)$. Then $(v_i,\overline{v_i})=2a_i$. Thus, the intersection matrix on $V(\xi_i)\oplus V(\bar{\xi_i})$ with respect to the basis $x_i,y_i$ is $\begin{pmatrix} 
    a_i&0 \\ 
    0&a_i\\ 
     \end{pmatrix}$. Since $T$ is of signature $(2,r)$, it follows that there exists  a unique $i$, say $1$, such that $a_i>0$. Then we choose $P:=\R\langle x_1, y_1\rangle$. Note that $P\otimes \C=V(\xi_1)\oplus V(\bar{\xi_1})$. By $f(v_1)=\xi_1 v_1, f(\overline{v_1})=\overline{\xi_1}\overline{v_1}, \xi_1\overline{\xi_1}=1$, we have $f(P)=P$ and $f_{\R}|P\in {\rm SO}(P)$. For any $x\in P^{\perp}_{T_{\R}}\cap T$, we have $(v_1,x)=0$ and $(\overline{v_1},x)=0$. Since the Galois group $Gal(\Q(\xi_1)/\Q)$ acts on $\{\xi_1,\bar{\xi_1},..., \xi_m,\bar{\xi_m}\}$ transitively, it follows that $(v_i,x)=(\overline{v_i},x)=0$ for any $i$. Thus, $x=0$ since $T$ is a non-degenerate lattice.

Suppose $n>1$. Let $s={\rm deg}(\Phi_k(x))$. We choose any $v\in T$ such that $v^2>0$. Let $L_1\subset T$ be the sublattice generated by $v,f(v),...,f^{s-1}(v)$. By ${\rm Ord}(f|T)=k$ and the minimal polynomial of $f$ is $\Phi_k(x)$, it follows that $f(L_1)=L_1$ and $\chi_{f|L_1}(x)=\Phi_{k}(x)$. Then by considering decomposition as in (\ref{equ:dec}), we deduce that $L_1$ is of signature $(2,s-2)$. Then $(L_1)_{T}^\perp$ is negative definite. We choose any nonzero $v_2\in (L_1)_{T}^\perp$, and let $L_2\subset T$ be the sublattice generated by  $v_2,f(v_2),...,f^{s-1}(v_2)$. Then $L_2$ is a negative definite lattice such that $f(L_2)=L_2$ and $\chi_{f|L_2}(x)=\Phi_k(x)$. Repeating this process, we obtain sublattices $L_i$, $i=1,2,...,n$ such that the following conditions 1) - 4) are satisfied:

1) $L_i\perp L_j$ for $i\neq j$, 

2) $f(L_i)=L_i$ for any $i$, 

3) ${\rm sig}(L_1)=(2,s-2)$ and ${\rm sig}(L_i)=(0,s)$ for $i>1$ and, 

4) $\chi_{f|L_i}(x)=\Phi_{k}(x)$ for any $i$.

Then $L_1\oplus\cdots\oplus L_n$ is a sublattice of $T$ of finite index. Then there exists a root, say $\xi$, of $\Phi_k(x)$ such that 1) $f(w_1)=\xi w_1$ for some nonzero $w_1\in (L_1)_{\Q(\xi)}$ and 2) $\R\langle x_1,y_1\rangle$ is of signature $(2,0)$, where $w_1=x_1+\sqrt{-1}y_1$.  For any $i\ge 2$, we choose a nonzero $w_i=x_i+\sqrt{-1}y_i \in (L_i)_{\Q(\xi)}$ such that $f(w_i)=\xi w_i$. Let $t\in \R$ be any transcendental number such that $t>1$. For a sufficiently large integer $N>n$, the plane $$P:=\R\langle t^Nx_1+tx_2+\cdots +t^{n-1}x_n,  t^Ny_1+ty_2+\cdots +t^{n-1}y_n\rangle \subset T_{\R}$$ is of signature $(2,0)$. Then, one can verify that $f(P)=P$, $f_{\R}|P\in {\rm SO}(P)$, and $P^{\perp}_{T_\R}\cap T=0$. This completes the proof of the lemma.

\end{document}